\documentclass[12pt]{article}
\usepackage{graphicx}
\usepackage{amsmath,amssymb,amsthm,esint,dsfont,tikz,float,soul}
\usepackage[english]{babel}
\usepackage[margin=1in]{geometry}
\usepackage{csquotes}
\usepackage{setspace}
\usepackage{pgfplots}
\usepackage[colorlinks, pdfpagelabels, pdfstartview = FitH, bookmarksopen=true, bookmarksnumbered = true, linkcolor = black,plainpages = false, hypertexnames = false, citecolor = black]{hyperref}
\usepackage{mathrsfs}
\usepackage{halloweenmath} 

\usetikzlibrary{decorations.pathreplacing,calligraphy,patterns}

\usepackage{color}

\newcommand{\blue}[1]{#1}

\newcommand{\answ}[1]{#1}

\newcommand{\dx}[1]{\:d#1}
\newcommand{\M}{\mathcal{M}}

\newcommand{\N}{\mathcal{N}}
\newcommand{\sym}{\mathcal{S}_0}

\newcommand{\Om}{\Omega}
\renewcommand{\k}{\kappa}
\newcommand{\cEt}{\mathcal{E}_{\xi,\eta}}
\renewcommand{\S}{\mathbb{S}}
\newcommand{\roe}{{r_\omega^\e}}
\newcommand{\bigO}{\mathcal{O}}
\newcommand{\ftf}{\sqrt[4]{24}}
\newcommand{\Qc}{\blue{Q_{\xi,\eta}^{H}}}
\newcommand{\rt}{\tilde{r}}

\newcommand{\B}{\mathcal{B}}
\newcommand{\cR}{\mathcal{R}}

\newcommand{\sgn}{\mathrm{sgn}}
\renewcommand{\P}{\mathcal{P}}
\newcommand{\exe}{E_{\xi,\eta}}
\newcommand{\ex}{E_\xi}
\renewcommand{\H}{\mathcal{H}}
\newcommand{\Qm}{Q_{\xi,\eta}}
\newcommand{\Qt}{\Qm}

\newcommand{\vd}{v^\d}
\renewcommand{\t}{\tau}
\newcommand{\vb}{\overline{v}}
\newcommand{\norm}[1]{\left|#1\right|}
\newcommand{\nnorm}[1]{\blue{|}#1\blue{|}}
\newcommand{\ph}{\widehat{\Phi}}
\renewcommand{\L}{\mathcal{L}}
\newcommand{\A}{\mathcal{A}}

\renewcommand{\d}{\delta}
\newcommand{\e}{\varepsilon}
\newcommand{\ee}{\mathbf{e}}
\newcommand{\MM}{\mathbb{M}}
\newcommand{\FF}{\mathbb{F}}
\newcommand{\nuh}{{\widehat{\nu}}}
\newcommand{\omegah}{{\widehat{\omega}}}
\newcommand{\Dd}{\mathcal{R}^\d}
\newcommand{\Qcd}{Q_{\xi,\eta}^{H,\d}}
\newcommand{\Qcdk}{Q_{\xi,\eta}^{H,\d_k}}
\newcommand{\fP}{\mathscr{P}}

\newcommand{\tr}{\mathrm{tr}}
\newcommand{\grad}{\nabla}
\newcommand{\dist}{\mathrm{dist}}
\newcommand{\C}{\mathcal{C}}
\newcommand{\R}{\mathbb{R}}

\newcommand{\restr}{\mathbin{\vrule height 1.6ex depth 0pt width 0.13ex\vrule height 0.13ex depth 0pt width 1.3ex}}

\newtheorem{theorem}{Theorem}[section]
\newtheorem{proposition}[theorem]{Proposition}
\newtheorem{lemma}[theorem]{Lemma}
\newtheorem{corollary}[theorem]{Corollary}
\theoremstyle{definition}

\title{Asymptotics for Minimizers of Landau-de Gennes with \blue{a} Magnetic Field and Tangential Anchoring}

\date{\today}

\usepackage[capitalise]{cleveref}
\usepackage{authblk}

\author[1]{Lia Bronsard\footnote{bronsard@mcmaster.ca }}
\author[2]{Dean Louizos\footnote{louiz001@umn.edu}}
\author[3]{Dominik Stantejsky\footnote{dominik.stantejsky@univ-lorraine.fr}}
\affil[1]{Department of Mathematics and Statistics, McMaster University, Hamilton, ON L8S 4L8 Canada}
\affil[2]{School of Mathematics, University of Minnesota, Minneapolis, MN 55455 USA}
\affil[3]{Universit\'e de Lorraine, Institut \'Elie Cartan de Lorraine, UMR 7502 CNRS,  54506 Vand\oe uvre-l\`es-Nancy Cedex, France}

\begin{document}

\maketitle

\begin{abstract}
In this article we prove existence of minimizers of the Landau-de Gennes energy for liquid crystals with homogeneous external magnetic field and strong uniaxial planar anchoring.
Next we consider the asymptotics of solutions to the joint minimization of the energy w.r.t.\ the function and its boundary condition.
This constitutes a generalization to arbitrary regular particle shapes of the results obtained in \cite{bls} \answ{in a particular setting}.
Moreover, we show the absence of line singularities in some asymptotic parameter regime.
Finally we characterize the optimal orientation of particles vis-\`a-vis the magnetic field direction and compute it explicitly for different particle shapes.\\ \linebreak
\textbf{Keywords:} Existence of minimizers, singular limit, tangential anchoring, boojums, line defects, Landau-de Gennes energy. \hfill\hfill
\linebreak
\textbf{MSC2020:} 
49J45, 
35J50, 
49S05, 
76A15. 
\end{abstract}

\section{Introduction}

In this article, we continue the study started in \cite{bls} of the effect of an external magnetic field on the type and locations of defects near a colloidal particle immersed in nematic liquid crystal in the case of tangential anchoring. 
In \cite{bls}, the case of a spherical colloid was addressed, and here we study the general case of colloids that are regular $C^{\answ{2},1}-$domains  with possibly non-trivial topology.
Colloid shapes that have been studied experimentally and numerically include
spherocylindrical
\cite{Hung,HGGAdP} and
ellipsoidal particles
\cite{TMMML},
as well as torii
\cite{Liu2013}
and cubes (and their approximations)
\cite{BGL,SPD}, and our study includes such shapes.

There are many novel aspects due to the general form of the colloid and of the anchoring conditions that must be addressed, and
even the existence of minimizers has to be established, due to the freedom allowed by the tangential anchoring condition. 
In \cite{weakanchor,colloid,colloidphys,ACS2021,fukudaetal04,fukudayokoyama06}, the study of the type and location of defects in the case of normal Dirichlet boundary conditions, when the liquid crystal molecules align perpendicularly to the surface of the particle, is well developed. 
Although  tangential anchoring is experimentally observed and has been investigated by \cite{Liu2013,Senyuk2021,Shi2005,Tas2012}, little is known mathematically in {the} case of tangential boundary conditions. 
The study of these tangential boundary conditions and the new analytical challenges that arise in this context, have been started recently in \cite{abc,ABv,bcs,bls}.

Finally, the effect of an external field has only been  mathematically  studied  in the case of normal Dirichlet boundary conditions. 
In the small particle limit and in the absence of an external magnetic field, the existence of a line singularity called a \emph{Saturn ring} defect is shown in \cite{colloid}. 
Further it is shown in \cite{abl, ACS2021} that when an external field is imposed,  a \emph{Saturn ring} defect can still occur even for \emph{large particles}. 
In this article, we are interested in studying the effect of a uniform magnetic field, imposed within the sample, on the type of defect expected in the case of \emph{tangential} anchoring for \emph{large particles}. 

We will be using the Landau-de Gennes model in which the average orientation of the nematic liquid crystal molecules is represented by a $Q$-tensor, that is an element of $\sym$, the set of traceless symmetric $3\times 3$ matrices, which accounts for the alignment of the molecules through their eigenvectors and eigenvalues.
\answ{The space $\sym$ is a Hilbert space endowed with the inner product and associated norm given by}
\begin{equation}\label{def:inner}
    \answ{\langle Q,P\rangle=\tr(QP)\quad\text{and}\quad \nnorm{Q}^2=\textstyle\sum_{i,j=1}^3Q_{ij}^2
    \, .}
\end{equation}
Another continuum model that is commonly used to describe liquid crystal is the Oseen-Frank model, in which the orientation is represented by a unit director $n\in\mathbb S^2$, and the $Q$-tensors model yields a relaxation of this uniaxial constraint. 
In fact, the Oseen-Frank model can be recovered from the Landau-de Gennes energy 
in certain parameter regimes (see \cite{Gartland})
and this convergence has recently produced new powerful mathematical analysis \cite{baumanparkphillips12,
canevari2d,
canevari3d,
singperturb,
golovatymontero14,
NguZa,
majumdarzarnescu10}.  

However, the $Q$-tensor model gives a much finer description  of the defect cores as it allows for more types of singularities such as half-degree line singularities as can be seen in the works of \cite{ABGL,ACS2021,ACS2024,BaZar,canevari3d,CO1,CO2,DiMiPi,GWZZ,majumdarzarnescu10}.
Part of our results is to show that in certain asymptotic regimes,  
those line singularities are not energetically favourable 
(see Theorem \ref{thm:line_defects_shrink_0}).

We will be using the simplified model from \cite{fukudaetal04,fukudayokoyama06} where the presence of the external field is modelled by adding a symmetry-breaking term to the energy (favouring alignment along the field), multiplied by a parameter accounting for the intensity of the field.
After adequate non-dimensionalization \cite{fukudaetal04,Gartland} we are left with two parameters $\xi,\eta>0$ which represent, in units of the particle radius, the coherence length for nematic alignment and magnetic coherence length. 
\answ{Since we are interested in the large particle limit, the nematic and magnetic coherence lengths are small compared to the particle radius, thus translating to the limit $\eta,\xi\to 0$.}
In these units, the colloidal particle is represented by a closed domain $\mathcal{D}\subset\R^3$, whose boundary surface is given by $\M$ so that the liquid crystal is contained in the domain  $\Omega=\R^3\setminus \mathcal{D}$.

Hence, the non-dimensionalized Landau-de Gennes energy functional that we study on a\answ{n open subset} $U\subset\Om$ is given by
\begin{equation}\label{def:energie}
    \exe(Q;U)=\int_U \frac{1}{2}|\grad Q|^2+\frac{1}{\xi^2}f(Q)+\frac{1}{\eta^2}g(Q)\, dx
    \, ,
\end{equation}
where
\begin{equation*}
    f(Q)=-\frac{1}{2}|Q|^2-\tr(Q^3)+\frac{3}{4}|Q|^4+\frac{2}{9}\quad\text{and}\quad g(Q)=\sqrt{\frac{2}{3}}-\frac{Q_{33}}{|Q|}
    \, {,}
\end{equation*}
with the convention that $g(0):=0$. With this definition, $g$ is lower semi-continuous on $\sym$. 
These two functions have the property that $f(Q)\geq 0$, with equality exactly when $Q\in\N$, the space of uniaxial $Q-$tensors, that is, $Q-$tensors of the form $n\otimes n-\frac13 I, n\in \mathbb{S}^2$, and $g(Q)\geq 0$ with equality if and only if $Q=sQ_\infty$ for $s\geq 0$ and $Q_\infty:= e_3\otimes e_3-\frac13 I$. 
\answ{Furthermore for $Q\in \N$ we define $n(Q)$ to denote the eigenvector of unit length associated to the leading eigenvalue of $Q$ and we remark that this choice is unique up to sign.
}

We note that the potential $\frac{1}{\xi^2}f(Q)+\frac{1}{\eta^2}g(Q)$ is minimized exactly at $Q=Q_{\infty}$ and that 
\begin{equation}\label{def:coer}
\frac{1}{\xi^2}f(Q)+\frac{1}{\eta^2}g(Q)\ge \frac{1}{\xi^2} C(\xi/\eta)\nnorm{Q-Q_{\infty}}^2
\, ,
\end{equation}
for some constant $C(\xi/\eta)>0$, see \cite{abl,ACS2021}. \answ{In addition, from \cite[Lemma 14]{canevari3d} we have the lower bound
\begin{equation}\label{bound:f}
    f(Q)\geq C_f\: \dist^2(Q,\N)
\end{equation}
for some $C_f>0$.}

We require that $\M$ be a $C^{\answ{2},1}$, \blue{connected, }compact, oriented $2-$manifold without boundary, and the unit normal of $\M$ pointing out of the particle is denoted by $\nu:\M\to\S^2$. 
We let the two principal curvatures of $\M$ be $\k_1,\k_2$ and subsequently define 
\begin{equation*}
    \k=\max_{\omega\in\M}\{|\k_1(\omega)|,|\k_2(\omega)|\}
    \, .
\end{equation*}
This gives the property that,
\begin{equation}\label{bound:curv}
    |\grad_\omega \nu(\omega)|\leq\sqrt{2}\k\quad\text{for all $\omega\in\M$}
    \, .
\end{equation}
We will frequently make use of the coordinate system $(r,\omega)$ to describe the region around $\M$, where for any $x$ sufficiently close to $\M$, we can write $x=\omega+r\nu(\omega)$ for some $\omega\in\M$ and $r>0$. Choosing $r_0$ \answ{sufficiently small so that this parametrization is globally injective and in particular such that 
\begin{equation}\label{def:r0}
    r_0\leq\frac{1}{2\k}
    \, ,
\end{equation}}
we are able to uniquely parameterize the surrounding region for $\omega\in\M$ and $r\in(0,r_0]$.

Let $U\subset\M$ be any measurable subset and let $\rho\in(0,r_0]$, then we define the cone of height $\rho$ from $U$ by
\begin{equation}\label{def:cone}
    \C_\rho(U):=\{\omega+r\nu(\omega):\omega\in U,\ r\in(0,\rho)\}
    \, .
\end{equation}
We are interested in studying minimizers of \eqref{def:energie} while imposing a strong tangential boundary condition. More precisely, we minimize $\exe$ over the admissible class of functions
\begin{equation*}
    \A:=\{Q\in Q_\infty+H^1(\Om;\sym),\ Q|_\M\in\N \:\text{ \answ{$\H^2$-}a.e.\ on $\M$},\ n(Q|_\M)\cdot\nu=0\: \text{ \answ{$\H^2$-}a.e.\ on $\M$}\}
    \, ,
\end{equation*}
\blue{where $Q|_\M$ denotes the restriction of $Q$ to $\M$ in the sense of traces.}

\answ{We note that the class $\A$ is not empty and includes not only maps with integer degree point defects, for which the associated director $n$ is liftable (and thus enjoying regularity properties like those of $Q$), but also other configurations such as tensor fields with half-integer singularities that may not be liftable  (for references on the lifting problem, see e.g.\ \cite{BeChi,MiVaS}).
Indeed, we only require $Q$ to be of the form $Q=n\otimes n - \frac13 I$ pointwise for some $n\in \S^2$ and \emph{do not assume any regularity} beyond measurability on $n$. 
In particular, $n$ is allowed to have jump discontinuities.
This is consistent with the fact that $n$ is only well-defined up to a sign.
}


Our first result is that minimizers exist within the class of admissible functions $\A$. 

\answ{\begin{proposition}\label{thm:exist}
    For any $\xi,\eta>0$ there exists $\Qm\in\A$ such that 
    \begin{equation*}
        \exe(\Qm)=\inf\{\exe(Q):Q\in\A\}
        \, .
    \end{equation*}
\end{proposition}}

The next two theorems state the limiting energetical behaviour of minimizers of \eqref{def:energie}. \answ{Theorem \ref{thm:lower} serves as a lower bound for energy bounded
sequences and for Theorem \ref{thm:upper} we construct a recovery sequence for this
lower bound, showing it is asymptotically optimal.}

\begin{theorem}[Lower Bound]\label{thm:lower}
    Let $\Qm$ minimize $\exe$ with $\Qm\in\A$. If
    \begin{equation*}
        \frac{\eta}{\xi}\to\infty\quad\text{as}\quad\xi,\eta\to0
        \, ,
    \end{equation*}
    then for any measurable set $U\subset\M$,
    \begin{equation*}
        \liminf_{\xi,\eta\to0}\: \eta\exe(\Qm;\answ{\C}_{r_0}(U))
        \ \geq \
        \ftf\int_U\Big(1-\sqrt{1-\nu_3^2}\Big)\, d\H^2
    \, .
    \end{equation*}
\end{theorem}

\begin{theorem}[Upper Bound]\label{thm:upper}
    Let $\Qm$ minimize $\exe$ with $\Qm\in\A$. If
    \begin{equation*}
        \frac{\eta}{\xi}\to\infty\quad \text{as}\quad \xi,\eta\to0
        \, ,
    \end{equation*}
    then for any measurable set $U\subset\M$,
    \begin{equation*}
        \limsup_{\xi,\eta\to0}\eta \exe(\Qm;\C_{r_0}(U))
        \ \leq \ 
        \ftf\int_U \Big(1-\sqrt{1-\nu_3^2}\Big)\, d\H^2
        \, .
    \end{equation*}
\end{theorem}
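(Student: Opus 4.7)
My proof plan is to exploit the minimality of $\Qm$ against a tailored competitor. Specifically, I will construct $Q^* \in \A$ whose \emph{global} energy is asymptotically $\ftf\int_\M(1-\sqrt{1-\nu_3^2})\, d\H^2$, and then combine this upper bound with \cref{thm:lower} applied on $\M\setminus U$ to extract the localized estimate on $\C_{r_0}(U)$.

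For the construction I work in tubular coordinates $(r,\omega)$. At each $\omega\in\M$ with $|\nu_3|<1$, the optimal tangent boundary value is $n_0(\omega) := (e_3-\nu_3\nu)/\sqrt{1-\nu_3^2}$, the unit tangent vector maximizing $n\cdot e_3=\sqrt{1-\nu_3^2}$ subject to $n\cdot\nu=0$. Along the normal I rescale $\rho=r/\eta$ and use that on $\N$, $g(n\otimes n-\tfrac{1}{3}I)=\sqrt{3/2}(1-n_3^2)$ and $|\nabla Q|^2=2|\nabla n|^2$, which after multiplication by $\eta$ reduces the normal energy per unit area of $\M$ to
\begin{equation*}
\int_0^\infty\Bigl(|n'|^2+\sqrt{3/2}(1-n_3^2)\Bigr)\, d\rho.
\end{equation*}
The Bogomolnyi factorization $|n'|^2+\sqrt{3/2}(1-n_3^2)\geq 2(3/2)^{1/4}|n'|\sqrt{1-n_3^2}$ is saturated by the great-circle profile from $n_0$ to $e_3$ with $|n'|=(3/2)^{1/4}\sqrt{1-n_3^2}$; integrating yields the pointwise cost $\ftf(1-\sqrt{1-\nu_3^2})$, using $2(3/2)^{1/4}=\ftf$. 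Setting $Q^*(r,\omega):=n(r/\eta,\omega)\otimes n(r/\eta,\omega)-\tfrac{1}{3}I$ in the layer where the profile acts and smoothly gluing to $Q_\infty$ beyond it (an exponentially small correction in the cutoff length $\eta\log(1/\eta)$) produces a uniaxial competitor with $f(Q^*)\equiv 0$ and asymptotic energy equal to the target, up to an $O(\eta\k)$ curvature correction in the tubular Jacobian.

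The main obstacle is that $n_0$ is not globally defined: by Poincar\'e--Hopf, the total index of any unit tangent field on $\M$ equals $\chi(\M)$, and the formula above is singular precisely where $\nu_3=\pm 1$. I regularize by excising geodesic caps $D_i(\sigma)$ of radius $\sigma=\eta^\alpha$ ($\alpha\in(0,1)$) around each singular point and replacing $n_0|_{D_i(\sigma)}$ by a smooth tangential extension $\tilde n_0$. The bulk estimate
\begin{equation*}
\int_{\M\setminus\bigcup_i D_i(\sigma)}|\nabla n_0|^2\, d\H^2 = O(\log(1/\sigma))
\end{equation*}
combined with the profile scale gives a tangential gradient contribution to $\eta\exe(Q^*)$ of size $O(\eta^2\log(1/\eta)\log(1/\sigma))$, while the cap contribution through the normal profile is bounded by $O(\sigma^2)$; both are $o(1)$. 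The resulting $\tilde n_0\in H^1(\M;\S^2)$ satisfies $\tilde n_0\cdot\nu=0$ a.e., so lifting by the normal profile yields $Q^*\in\A$ with
\begin{equation*}
\eta\exe(Q^*) = \ftf\int_\M(1-\sqrt{1-\nu_3^2})\, d\H^2 + o(1).
\end{equation*}

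Finally, decomposing $\Om=\C_{r_0}(U)\sqcup\C_{r_0}(\M\setminus U)\sqcup(\Om\setminus\C_{r_0}(\M))$ and invoking $\exe(\Qm)\leq\exe(Q^*)$ together with \cref{thm:lower} applied to $\M\setminus U$,
\begin{align*}
\limsup_{\xi,\eta\to0}\eta\exe(\Qm;\C_{r_0}(U))
&\leq \limsup_{\xi,\eta\to0}\eta\exe(Q^*) - \liminf_{\xi,\eta\to0}\eta\exe(\Qm;\C_{r_0}(\M\setminus U)) \\
&\leq \ftf\int_\M(1-\sqrt{1-\nu_3^2})\, d\H^2 - \ftf\int_{\M\setminus U}(1-\sqrt{1-\nu_3^2})\, d\H^2,
\end{align*}
where the nonnegative contribution on $\Om\setminus\C_{r_0}(\M)$ has been dropped. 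This is the claimed upper bound $\ftf\int_U(1-\sqrt{1-\nu_3^2})\, d\H^2$.
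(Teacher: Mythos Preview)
Your localization argument at the end---decomposing $\Om$, using minimality against a global competitor, and subtracting the lower bound on $\C_{r_0}(\M\setminus U)$---is exactly the paper's proof of \cref{thm:upper} from its Proposition~4.1. The overall strategy of building a uniaxial competitor from the optimal tangent field $n_0=(e_3-\nu_3\nu)/\sqrt{1-\nu_3^2}$ and a rescaled one-dimensional transition to $e_3$ also matches the paper.

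The genuine gap is in your treatment of the singular set $\{|\nu_3|=1\}$. You excise geodesic caps $D_i(\sigma)$ and assert the existence of a \emph{smooth tangential extension} $\tilde n_0$ of $n_0$ over each cap, with $\tilde n_0\in H^1(\M;\S^2)$ and $\tilde n_0\cdot\nu=0$. But you yourself invoke Poincar\'e--Hopf: the degree of $n_0$ around $\partial D_i(\sigma)$ is generically nonzero, and in that case \emph{no} continuous unit tangent extension over the disk exists. Consequently either $\tilde n_0\notin H^1$ (so $Q^*\notin\A$) or the tangential Dirichlet energy on the caps is infinite; your ``cap contribution $O(\sigma^2)$'' accounts only for the normal-profile cost and omits this. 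A secondary issue is that $\{|\nu_3|=1\}$ need not consist of isolated points for a $C^{1,1}$ surface; your excision of ``each singular point'' does not cover this.

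The paper confronts exactly this obstruction in its Proposition~4.1. It covers the compact set $\M_d=\{\nu=\pm e_3\}$ by finitely many patches $\cR_i^\d$ of fixed size $\d$ (independent of $\eta$), computes the degree $d_i$ of $v^*$ on $\partial\cR_i^\d$, and then \emph{inserts} $|d_i|$ explicit point defects of degree $\operatorname{sgn}(d_i)$ inside $\cR_i^\d$ so that the total degree on the boundary of the punctured patch vanishes; only then does a Lipschitz tangential extension exist. Around each inserted defect $p_j^i$ the paper uses a three-dimensional hedgehog-type profile $n(r,\omega)=(u+r\nu)/|u+r\nu|$ on a box of size $\eta\times\eta$ (region $\Om_3$), whose energy is $O(\eta^2)$, and glues this to the bulk profile via further transition layers $\Om_4,\Om_5$. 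This is the missing ingredient in your construction; without it the competitor is not admissible.
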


In the case of a spherical particle $\M=\mathbb{S}^2$, Theorem \ref{thm:lower} and \ref{thm:upper} give the optimal lower and upper energy bound that was proven  already in \cite{bls}.
There, a uniaxial Dirichlet condition $Q_b$ is imposed on $\M$ and the limiting energy of minimizers is found to be
\begin{equation*}
    \ftf\int_{\S^2}(1-|n_3(Q_b)|)\, d\H^2
    \, .
\end{equation*}
This expression has an explicit dependence on the imposed boundary condition $Q_b$, and it has been shown in \cite{bls} that there exists a minimizing boundary condition within the class of uniaxial and tangential $Q_b$, for which the limit reads
\begin{equation*}
    \ftf\int_{\S^2}\Big(1-\sqrt{1-\nu_3^2}\Big)\, d\H^2
    \, .
\end{equation*}
This is due to the fact that $|n_3(Q_b(\omega))|\leq\sqrt{1-(\nu_3(\omega))^2}$ for all $\omega\in\S^2$ since $\nu(\omega)$ is perpendicular to $n(Q_b(\omega))$. 

Contrary to \cite{bls}, in this work we do not impose a Dirichlet boundary condition, the boundary data $\Qm|_\M$ is possibly changing with $\xi$ and $\eta$, so we would like a uniform lower bound which does not depend on the exact values of the minimizer along the boundary. 
The lower bound in Theorem \ref{thm:lower} has this property and we prove that this is in fact the correct lower bound as we are able to construct a recovery sequence in Section \ref{sec:upper} which attains this energy in the limit $\xi,\eta\to 0$.

The two main differences in the proofs compared to \cite{bls} are the following: 
In the lower bound, due to the non-convexity of the particle, we have to refine the radial estimates to show that the transition from the boundary condition to uniform alignment (at infinity) takes place already on a finite length scale (of order $\eta$). 
This requires quantitative estimates of the distance between the minimizer $\Qm$ and $Q_{\infty}$, see Lemma \ref{lemma:length}.
For the upper bound construction, we can not construct a constant recovery sequence with optimal boundary condition as in \cite{bls} since, depending on the geometry and orientation of the particle, this would create line singularities with a non-negligible energetic cost \answ{see Figure \ref{fig:line-sing}.}
\begin{figure}[hbt!]
    \centering
    \includegraphics[width=0.5\linewidth]{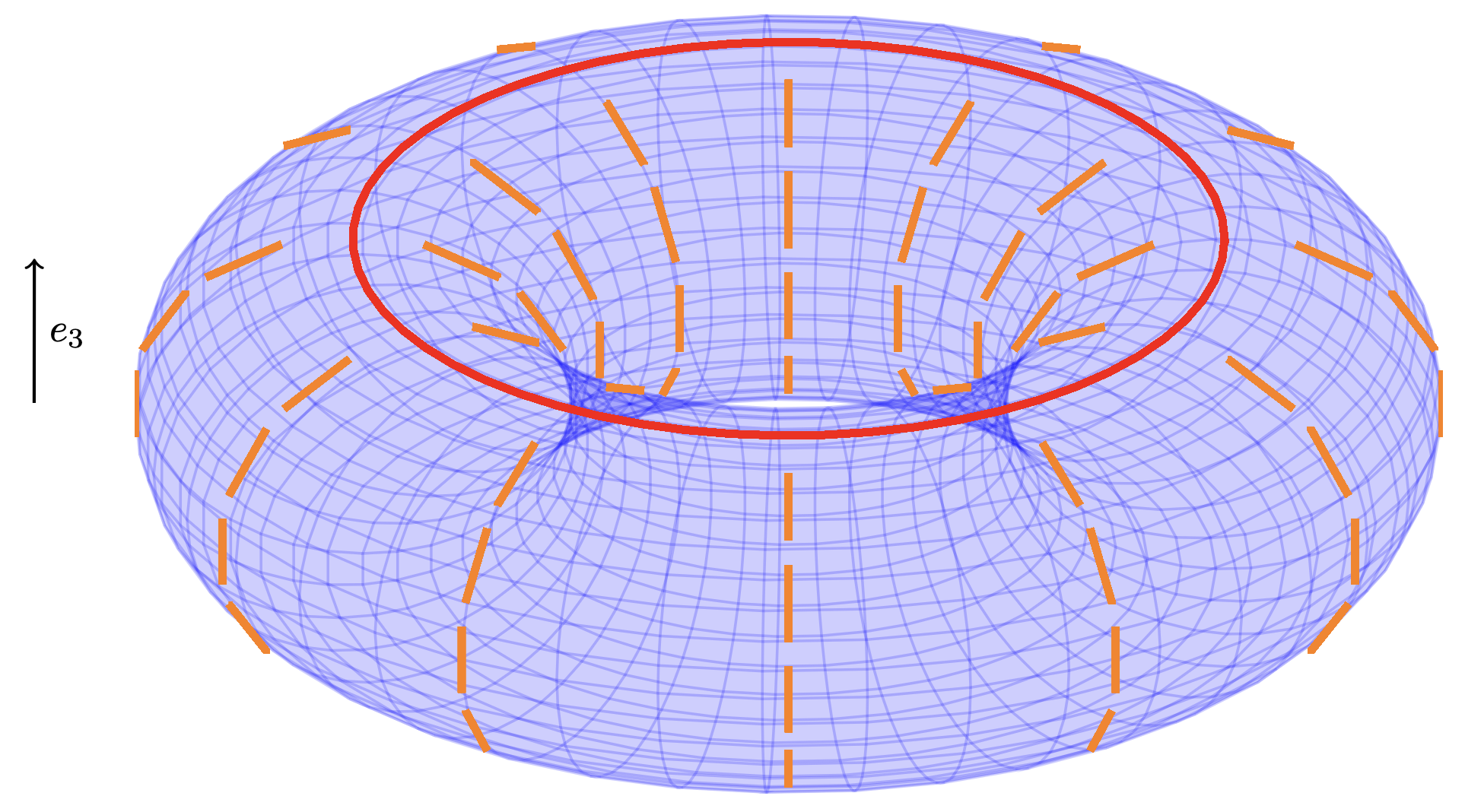}
    \caption{\answ{Torus with the optimal boundary condition (orange) and a line singularity (red).}}
    \label{fig:line-sing}
\end{figure}
Instead, line singularities are replaced by an appropriate number of point singularities and a Lipschitz extension \answ{is used to interpolate between singularities and the optimal boundary condition, see Figure \ref{fig:vectorfield}.}
\begin{figure}[hbt!]
    \centering
    \includegraphics[width=0.2\linewidth]{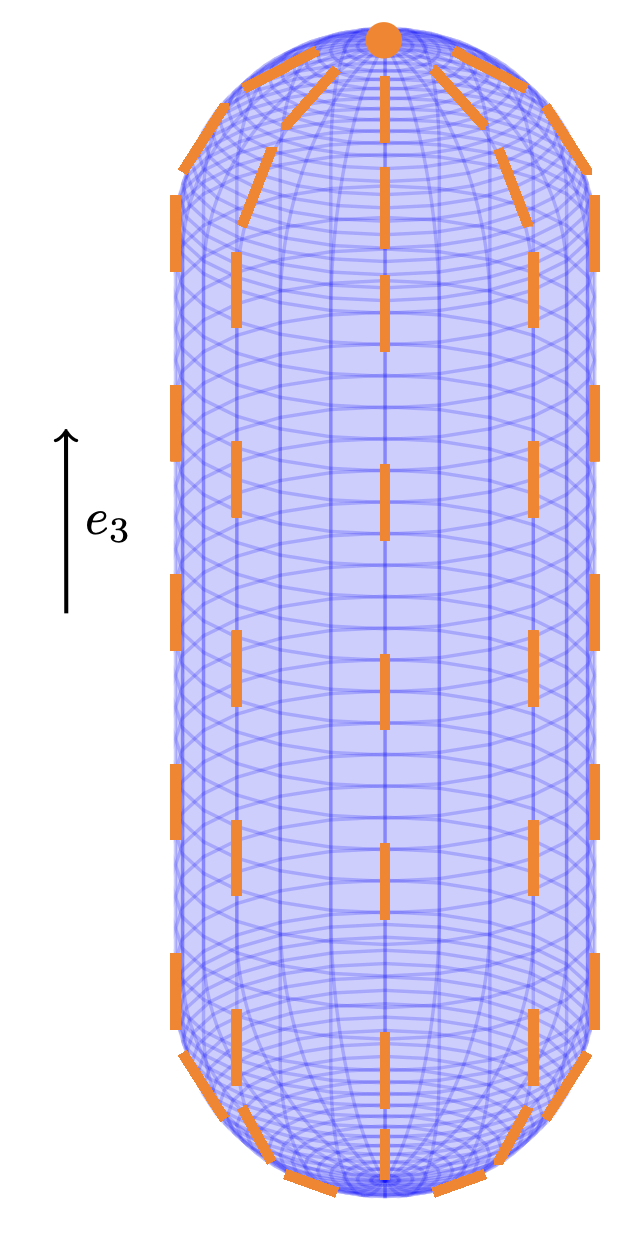}\hspace{2cm}\includegraphics[width=0.5\linewidth]{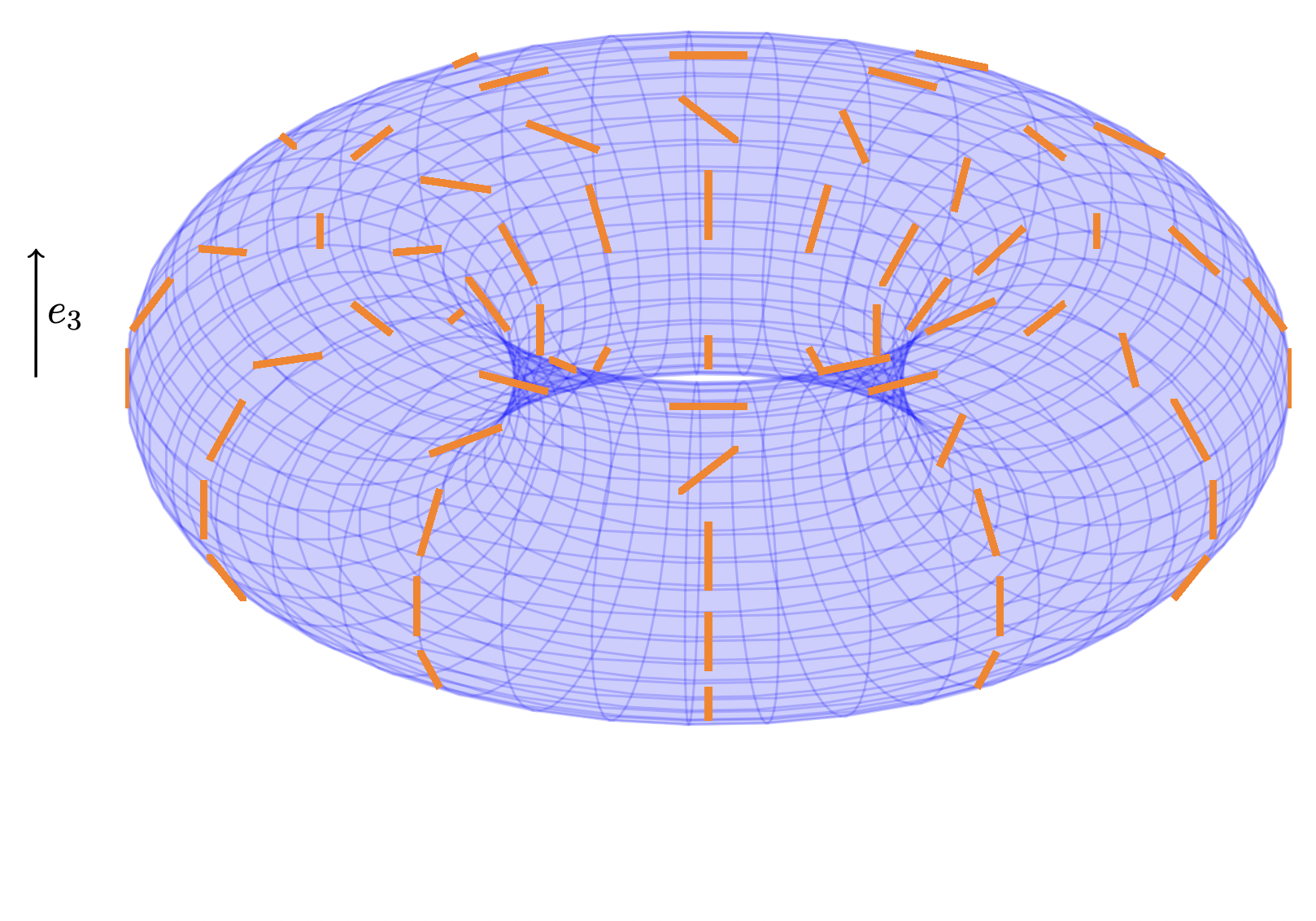}
    \caption{\answ{On the left, we depict a spherocylindrical particle with boundary condition that has two point defects (top and bottom of the spherical caps).
    On the right, a torus with close-to-optimal boundary condition that avoids the creation of a line defect as for example in Figure~\ref{fig:line-sing}, and that will be used in the construction of the upper bound. 
    No point defects are needed in this case.
    }}
    \label{fig:vectorfield}
\end{figure}
\answ{
Away from these point singularities, we use an adaptation of the optimal one-dimensional profile from \cite{abl}, which we restate in Lemma \ref{lemma:optimal} for convenience, in order to define a competitor} while respecting the tangency condition on $\M$.

The corresponding results in the case of strong homeotropic anchoring have been obtained in \cite{abl, ACS2021, ACS2024}.
While a surface energy similar to the one in Theorem \ref{thm:lower} and \ref{thm:upper} is also present in those articles, it has been observed in \cite{ACS2021, ACS2024} that in certain parameter regimes, there are additional energetic contributions due to the presence of line singularities. 
For non-convex particles, there exist minimizing configurations whose energy is \emph{not} entirely concentrated on the particle surface $\M$, see \cite{ACS2024}. 
Remarkably, this is not the case for planar anchoring as the degree of freedom in the boundary condition can be used to avoid such contributions, in particular those due to line defects. 

\answ{
This is reflected in the fact that in Theorem~\ref{thm:lower} and \ref{thm:upper}, there is no term associated to line defects and we can show in a particular regime that indeed any line defects present for $\eta,\xi>0$ must vanish in the limit, see Theorem~\ref{thm:line_defects_shrink_0}.

\blue{We remark that the assumption of connectedness of $\M$ can be relaxed. 
In particular, $\mathcal{D}$ can consist of several components. 
If each connected component of $\M$ is an oriented $C^{2,1}-$manifold of dimension $2$ without boundary and is at a positive distance from each other component, we can carry out all steps in the proof of Theorem~\ref{thm:lower} and \ref{thm:upper} separately for each component and add up the individual bounds.
This is due to the fact that all energy concentrates in the boundary layer of size $\eta$ which eventually becomes smaller than the distance separating the components.
This situation of $\mathcal{D}$ consisting of two components of positive distance corresponds to the physical situation in which the distance between two colloidal particles increases at the same rate as the size of the particles.
Therefore, we cannot expect to see an interaction between the colloidal particles in this parameter regime.
}

Contrarily to the spherical case, the energy for a general manifold $\M$ depends on its orientation relative to the applied magnetic field in $e_3-$direction. 
It is therefore natural to ask the question, what would be the energy minimizing orientation if the particle was allowed to rotate.
We answer this question for several examples in Section~\ref{sec:Rotations}, see also Figures~\ref{fig:capsule},~\ref{fig:torus}.
}

\paragraph{Organization of the paper.} 
In Section~\ref{sec:exist} we prove the existence of minimizers of the energy with tangential anchoring. 
In Section~\ref{sec:lower}, we establish the
lower bound from Theorem~\ref{thm:lower}. 
In Section~\ref{sec:upper}, we obtain the upper bound of Theorem~\ref{thm:upper}, and using an appropriate construction to deal with possible line defects, we eliminate them in Section~\ref{sec:lines}  under an additional assumption. 
Finally in Section~\ref{sec:Rotations}
we discuss the optimal orientation of $\M$ with respect to the imposed magnetic field for the limiting energy and compare to existing literature.

\paragraph{Acknowledgements:} 
The authors thank Xavier Lamy for useful discussions. 
LB is supported by an NSERC (Canada) Discovery Grants.
The main part of this work was carried out while DL and DS were still at McMaster University.

\section{Existence of Minimizers}\label{sec:exist}

The existence of minimizers of the energy $\exe$ with an imposed Dirichlet boundary condition is well established \cite{abl}.
However, we allow the boundary values to vary amongst uniaxial $Q-$tensors whose associated director is confined to the unit circle in the tangent space.  
So one needs to show that the obtained compactness for a minimizing sequence is not only enough to show lower semi-continuity for the energy, but also for these boundary conditions. 

\begin{proof}[Proof of \answ{Proposition} \ref{thm:exist}]
    Let $m := \inf\{\exe(Q):Q\in\A\}$ then by definition of the infimum there exists a sequence $Q^k\in\A$ such that $\exe(Q^k)\to m$ in $\R$. 
From this we see that there exists $C>0$ such that
\begin{equation*}
    C\geq\exe(Q^k)\geq\int_\Om\frac{1}{2}|\grad Q^k|^2\, dx=\frac{1}{2}\nnorm{\grad Q^k}^2_{L^2(\Om)}
    \, .
\end{equation*}
Using that $f(Q)+h^2 g(Q)\geq C(h)|Q-Q_\infty|^2$ for some $C(h)>0$ by (\ref{def:coer}), it holds that
\begin{equation*}
    C\geq\exe(Q^k)\geq\int_\Om\frac{1}{\xi^2}\Big(f(Q^k)+\frac{\xi^2}{\eta^2}g(Q^k)\Big)\geq \frac{1}{\xi^2} C(\xi/\eta)\nnorm{Q^k-Q_\infty}_{L^2(\Om)}^2
    \, ,
\end{equation*}
thus $Q^k-Q_\infty$ is a bounded sequence in $H^1(\Om;\sym)$. This gives the existence of a weakly convergent subsequence $Q^{k_i}$ which we relabel as $Q^k$, i.e.\ $Q^k\rightharpoonup Q^*$ in $H^1$ for some $Q^*\in Q_\infty+H^1(\Om;\sym)$. 
It remains to prove that $Q^*\in\A$ and that the energy functional $\exe$ is weakly lower semi-continuous. By the compactness of the trace operator from $H^1(\Om;\sym)\to L^2(\M;\sym)$, it follows that (up to passing to a subsequence) $Q^k|_\M\to Q^*|_\M$ in $L^2$.
Taking another subsequence if necessary, we obtain that $Q^k|_\M\to Q^*|_\M$ pointwise \answ{$\H^2$-}a.e.\ on $\M$. 
Using Fatou's lemma it follows that
\begin{equation*}
    \liminf_{k\to\infty}\int_\M f(Q^k)\, d\H^2\geq \int_\M f(Q^*)\, d\H^2\geq 0
    \, ,
\end{equation*}
but $f(Q^k)=0$ \answ{$\H^2$-}a.e.\ on $\M$, so this means
\begin{equation*}
    \int_\M f(Q^*)\, d\H^2=0
    \, ,
\end{equation*}
and hence $Q^*\in\N$ \answ{$\H^2$-}a.e.\ on $\M$.
\answ{Thus $Q^*$ admits the pointwise representation $Q^*=n(Q^*|_\M)\otimes n(Q^*|_\M)-\frac{1}{3}I$.
We want to show next that $n(Q^*|_\M)\cdot\nu=0$ $\H^2$-a.e.\ on $\M$.}
By Lebesgue's Dominated Convergence Theorem,
\begin{equation*}
    \int_\M \langle Q^k|_\M+\frac{1}{3}I,\nu\otimes\nu\rangle\, d\H^2\to\int_\M \langle Q^*|_\M+\frac{1}{3}I,\nu\otimes\nu\rangle\, d\H^2
    \, ,
\end{equation*}
\answ{where the inner product is as defined in \eqref{def:inner}.} 
Making use of the fact that \answ{$Q^k\in\N$ $\H^2$-a.e.\ on $\M$, and hence admits the pointwise representation}
\begin{equation*}
    Q^k|_\M=n(Q^k|_\M)\otimes n(Q^k|_\M)-\frac{1}{3}I,\ \text{with}\ n(Q^*|_\M)\cdot\nu=0,\ \text{\answ{$\H^2$-}a.e.\ on $\M$,}
\end{equation*}
\blue{it follows that} $Q^k|_\M+\frac{1}{3}I=n(Q^k|_\M)\otimes n(Q^k|_\M)$ \answ{$\H^2$-}a.e.\ on $\M$. 
Using that \blue{$\langle n(Q^k|_\M)\otimes n(Q^k|_\M),\nu\otimes \nu\rangle=(n(Q^k|_\M)\cdot\nu)^2=0$ $\H^2$-a.e.},
\begin{equation*}
    0=\int_\M \langle Q^k|_\M+\frac{1}{3}I,\nu\otimes\nu\rangle\, d\H^2\to\int_\M \langle Q^*|_\M+\frac{1}{3}I,\nu\otimes\nu\rangle\, d\H^2=0
    \, ,
\end{equation*}thus $n(Q^*|_\M)\cdot\nu=0$ \answ{$\H^2$-}a.e.\ on $\M$, so $Q^*\in\A$. 
Finally we show that $\exe$ is weakly lower semi-continuous. Using Fatou's lemma and the lower semi-continuity of $|\grad Q|^2$ w.r.t.\ weak $H^1$ convergence
\begin{equation*}
    \liminf_{k\to\infty}\: \exe(Q^k)\geq\int_\Om\frac{1}{2}|\grad Q^*|^2\, \answ{dx}+\answ{\int_\Omega}\liminf_{k\to\infty}\left(\frac{1}{\xi^2}f(Q^k)+\frac{1}{\eta^2}g(Q^k)\right)\, dx
    \, .
\end{equation*}
As weak $H^1$ convergence yields strong $L^2$ convergence of $Q^k$ to $Q^*$, we have {(up to passing to a subsequence)} $Q^k\to Q^*$ pointwise a.e. on $\Om$. Combining this with the continuity of $f$ and lower semi-continuity of $g$, we conclude
\begin{equation*}
    \liminf_{k\to\infty}\: \exe(Q^k)\geq\exe(Q^*)
    \, ,
\end{equation*}
and so we have proven the existence of a minimizer of $\exe$ which is in the admissible class~$\A$.
\end{proof}

\section{The Lower Bound}\label{sec:lower}

\answ{This section is devoted to the proof of the lower bound described in Theorem \ref{thm:lower}.} In what follows, we will often find it useful to project a $Q-$tensor onto the set of uniaxial $Q-$tensors {$\N$}.  
For any $Q\in\sym$ we can write it as \begin{equation}\label{eqn:decomposition_Q}
    Q=s\Big(n\otimes n-\frac{1}{3}I+t\Big(m\otimes m-\frac{1}{3}I\Big)\Big)
    \, ,
\end{equation}
where $n,m\in\S^2$ with $n\perp m$ and $s\in[0,\infty)$, $t\in[0,1]$, see \cite[Proposition 1]{majumdarzarnescu10}. \answ{Let $\lambda_1\geq\lambda_2\geq\lambda_3$ be the eigenvalues of $Q$, then} expression \answ{\eqref{eqn:decomposition_Q}} is \answ{a} unique \answ{decomposition (up to replacing $n$ (resp.\ $m$) by $-n$ (resp.\ $-m$))} provided $Q\notin\B$, where 
\begin{equation}\label{set:B}
    \B:=\{Q\in\sym: Q= 0\ \text{or}\ t= 1\}=\{Q\in\sym:\lambda_1=\lambda_2\}
    \, .
\end{equation}
Defining $n(Q)$ to be the vector $n$ from the decomposition \eqref{eqn:decomposition_Q} above, we introduce the projection onto $\N$ given by
\begin{equation}\label{def:proj}
    P(Q):=n(Q)\otimes n(Q)-\frac{1}{3}I
    \, .
\end{equation}
We note \blue{that this definition of $n(Q)$ extends the definition introduced in Section 1 to include not only $Q\in \N$ but also $Q\in\sym\setminus\B$ and we remark }that \blue{$P(Q)$} is $C^1$ on $\sym\setminus\B$ \answ{as seen in \cite[Lemma 12]{canevari3d}}.

Throughout the proof of Theorem \ref{thm:lower} we will consider the energy along a part of a ray from $\omega\in \M$, which we define by
\begin{equation}\label{ray-energy}
    \cEt(Q,\omega)=\int_0^{r_0}\Big(\frac{1}{2}\Big|\frac{\partial Q}{\partial r}\Big|^2+\frac{1}{\xi^2}f(Q)+\frac{1}{\eta^2}g(Q)\Big)(1+r\k_1(\omega))(1+r\k_2(\omega))\, dr
\end{equation}
\answ{for $Q\in H^1((0,r_0);\sym)$}. We note that,
\begin{equation*}
    |\grad \Qt|^2=\left|\frac{\partial \Qt}{\partial r}\right|^2+\frac{1}{(1+r|\k_1|)^2}\left|\frac{\partial \Qt}{\partial \omega_1}\right|^2+\frac{1}{(1+r|\k_2|)^2}\left|\frac{\partial \Qt}{\partial \omega_2}\right|^2\geq \left|\frac{\partial \Qt}{\partial r}\right|^2
    \, ,
\end{equation*}
\answ{where the notation $\frac{\partial}{\partial \omega_i}$ means $(\tau_i\cdot \nabla)$ with $\tau_1,\tau_2$ being orthonormal vectors in $T_\omega\M$ which diagonalize the second fundamental form}, so  \eqref{ray-energy} gives \blue{the following lower bound on} \eqref{def:energie}
\begin{equation*}
    \eta \exe(\Qt;\C_{r_0}(U))\geq\int_U\eta\cEt(\Qt(\cdot,\omega),\omega)\, d\H^2(\omega)
    \, ,
\end{equation*}
where $\C_{r_0}(U)$ is defined \blue{in} (\ref{def:cone}).
For each $\omega\in U$ there are two possibilities. 
Either
\begin{equation*}
    \liminf_{\xi,\eta\to0}\: \eta\cEt(\Qt(\cdot,\omega),\omega)\geq \ftf\Big(1-\sqrt{1-\nu_3^2}\Big)
    \, ,
\end{equation*}
in which case there is nothing left to prove, or
there always exist $\blue{\xi,\eta}>0$ small enough such that $\eta\cEt(\Qt(\cdot,\omega),\omega)<\ftf(1-\sqrt{1-\nu_3^2})$. 
As preparation for the estimates in this case,
we provide some useful facts and definitions.
For a\blue{n arbitrary} fixed \blue{$0<q_0<\sqrt{2/3}$}, define
\begin{align}\label{def:Lip_g}
    L_g
    \ := \
    \sup\bigg\{ \frac{|g(Q_1) - g(Q_2)|}{|Q_1-Q_2|} \: : \: Q_j\in \sym\text{ with } \sqrt{\frac{2}{3}}-q_0\leq |Q_j|\leq \sqrt{\frac{2}{3}}+q_0\, , j=1,2\} \bigg\}
    \, ,
\end{align}
i.e.\ $L_g$ is the Lipschitz constant of $g$ on the set $\{Q\in\sym:\sqrt{2/3}-q_0\leq |Q|\leq \sqrt{2/3}+q_0\}$.
In particular, small neighbourhoods of $\N$ are contained in this set.
We then choose $\alpha:=\sqrt\frac83(1+L_g)+4$ and for $\e>0$ we define the sets
\begin{equation*}
    \answ{A^\e:=\{Q\in\sym:|Q-Q_\infty|^2< \alpha\e\}}
    \, ,
\end{equation*}
\begin{equation*}
    B^\e:=\{Q\in\sym:|Q-Q_\infty|^2\geq \alpha\e,\ \dist(Q,\N)<\e\}
    \, ,
\end{equation*}
and
\begin{equation*}
    C^\e=\{Q\in\sym:|Q-Q_\infty|^2\geq\alpha\e,\ \dist(Q,\N)\geq\e\}
    \, .
\end{equation*}
The definition of these sets and the inspiration for the following lemma come from \cite[Lemma 5.3]{ACS2024}.
\begin{lemma}\label{lemma:length}
    Let $\e>0$ be sufficiently small, let $\omega\in \M$ and $\xi,\eta>0$.
    Assume that 
    \begin{equation*}
        \eta\:\cEt(\Qt(\cdot,\omega),\omega)
        \ < \
        \ftf\Big(1-\sqrt{1-\nu_3^2}\Big)
        \, .
    \end{equation*}
    Then, for $I^\e:=\{r\in[0,r_0]: |\Qt(r,\omega)-Q_\infty|^2<\alpha\e\}$ it holds
    \begin{equation*}
        |I^\e|\geq r_0-C\left(\frac{\eta}{\e}+\frac{\xi^2}{\eta\e^2}\right)
        \,
    \end{equation*}
    for some constant $C>0$\answ{, which is independent of $\xi$, $\eta$ and $\e$}.
\end{lemma}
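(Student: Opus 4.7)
The plan is to bound the measure of the complement $[0,r_0]\setminus I^\e$, which consists exactly of those $r$ for which $\Qt(r,\omega)\in B^\e\cup C^\e$, and then subtract from $r_0$. I will estimate the two pieces separately, using the $f$-potential on $C^\e$ and the $g$-potential on $B^\e$. Throughout, I use two facts: the Jacobian $(1+r\k_1)(1+r\k_2)$ lies in $[1/4,9/4]$ on $[0,r_0]$ (since $r_0=(2\k)^{-1}$), and the hypothesis yields the absolute bound $\eta\cEt(\Qt(\cdot,\omega),\omega)\leq\ftf$.

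On the $C^\e$-set, the condition $\dist(Q,\N)\geq\e$ combines with the quadratic transversal growth of $f$ near $\N$ and the coercivity of $f$ at infinity to give $f(Q)\geq c_f\e^2$ for some universal $c_f>0$. Hence
\[
\ftf\;\geq\;\eta\cEt\;\geq\;\frac{\eta}{\xi^2}\cdot\frac{c_f\e^2}{4}\,\bigl|\{r:\Qt(r,\omega)\in C^\e\}\bigr|,
\]
which yields $|\{r:\Qt(r,\omega)\in C^\e\}|\leq C\xi^2/(\eta\e^2)$, the second term in the claimed inequality.

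The bulk of the work is on the $B^\e$-set, where $Q=\Qt(r,\omega)$ is within $\e$ of $\N$ but satisfies $|Q-Q_\infty|^2\geq\alpha\e$. Let $P\in\N$ be a nearest point to $Q$, so $|Q-P|<\e$. A direct computation on $\N$, using $g|_\N=\sqrt{3/2}(1-n_3^2)$ and $|P'-Q_\infty|^2=2(1-n_3^2)$ for $P'=n\otimes n-\tfrac{1}{3}I\in\N$, yields $g(P)=(\sqrt 6/4)|P-Q_\infty|^2$. The one-line estimate
\[
|P-Q_\infty|^2\;\geq\;|Q-Q_\infty|^2-2\e|Q-Q_\infty|\;\geq\;\alpha\e-2\sqrt 6\,\e
\]
(using $|Q-Q_\infty|\leq\sqrt 6$, which follows from $|Q|\leq\sqrt{2/3}+q_0<2\sqrt{2/3}$) combined with the Lipschitz bound $g(Q)\geq g(P)-L_g\e$ gives $g(Q)\geq[(\sqrt 6/4)\alpha-3-L_g]\,\e$; substituting $\alpha=(2\sqrt 6/3)(1+L_g)+4=\sqrt{8/3}(1+L_g)+4$ leaves a universal positive remainder, so $g(Q)\geq c_g\,\e$. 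Integrating exactly as before,
\[
\ftf\;\geq\;\eta\cEt\;\geq\;\frac{\eta}{\eta^2}\cdot\frac{c_g\e}{4}\,\bigl|\{r:\Qt(r,\omega)\in B^\e\}\bigr|,
\]
which gives $|\{r:\Qt(r,\omega)\in B^\e\}|\leq C\eta/\e$, the first term.

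The main difficulty is the $B^\e$ step. Because $g$ vanishes on the full ray $\{sQ_\infty:s\geq 0\}$, one cannot deduce $g(Q)\geq c|Q-Q_\infty|^2$ directly; the projection onto $\N$ is essential to recover the quadratic behaviour of $g$ on the manifold, and the Lipschitz error $L_g\e$ from projecting must then be absorbed by taking $\alpha$ large enough in terms of $L_g$, which is precisely the role of the otherwise peculiar formula $\alpha=\sqrt{8/3}(1+L_g)+4$.
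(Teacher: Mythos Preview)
Your proof is correct and follows essentially the same route as the paper: decompose the complement of $I^\e$ into the preimages of $B^\e$ and $C^\e$, bound the $C^\e$-piece via $f(Q)\geq c_f\dist(Q,\N)^2\geq c_f\e^2$, and bound the $B^\e$-piece by showing $g(Q)\geq c_g\e$ through the projection $P\in\N$, the identity $g(P)=\sqrt{3/8}\,|P-Q_\infty|^2$, and the Lipschitz estimate $g(Q)\geq g(P)-L_g\e$. The only cosmetic differences are that you use the cruder bound $|Q-Q_\infty|\leq\sqrt 6$ (the paper uses $\leq 2$ via $\dist(Q,\N)+\operatorname{diam}\N$), yielding $g(Q)\geq(\sqrt 6-2)\e$ rather than the paper's $g(Q)\geq\e$, and you use the absolute bound $\eta\cEt\leq\ftf$ instead of the $\omega$-dependent $D_\omega$; neither affects the conclusion.
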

\answ{We remark that Lemma \ref{lemma:length} will be used in the proof of Theorem \ref{thm:lower}, in which $\xi$ and $\eta$ are both very small compared to $\e$ and $\e^2$. As well, we will consider the regime where $\eta/\xi\to\infty$ so Lemma \ref{lemma:length} tells us that $|I^\e|$ is very close to $r_0$ for sufficiently small $\xi,\eta>0$.}
\begin{proof}
    We begin by decomposing $[0,r_0]$ into three disjoint sets, $I^\e, J^\e$, and $K^\e$, where
    \begin{equation}\label{def:k-eps}
        J^\e=\{r\in[0,r_0]:\Qt(r,\omega)\in B^\e\}\quad\text{and}\quad K^\e=\{r\in[0,r_0]:\Qt(r,\omega)\in C^\e\}
        \, .
    \end{equation}
    We can see that this means,
    \begin{equation*}
        |I^\e|=r_0-|J^\e|-|K^\e|
    \end{equation*}
    so it suffices to find upper bounds for the measures of $J^\e$ and $K^\e$.
    Beginning with $J^\e$, we \answ{let
    \begin{equation}\label{def:D_omega}
        D_\omega:=\ftf(1-\sqrt{1-(\nu_3(\omega))^2})
    \end{equation}
    in order to simplify notation and we immediately see from \eqref{ray-energy} that}
    \begin{equation*}
        D_\omega
        \ > \ 
        \eta\:\cEt(\Qt(\cdot,\omega),\omega)
        \ \geq \ 
        \frac{1}{\eta}\int_{J^\e}g(\Qt)(1-r_0\k)^2\ dr
        \ \geq \ 
        \frac{1}{4\eta}|J^\e|g_{\mathrm{min}}^\e
        \, ,
    \end{equation*}
    where $g_{\mathrm{min}}^\e=\min\{g(Q):Q\in \answ{\overline{B^\e}}\}$. \answ{Note that this minimum exists since $\overline{B^\e}$ is compact. The above inequality} implies that
    \begin{equation*}
        |J^\e|\leq\frac{4\eta D_\omega}{g_{\mathrm{min}}^\e}
        \, .
    \end{equation*}
    It remains to show that $g_\mathrm{min}^\e$ is bounded below independent of $\xi$ or $\eta$.
    Note that for $\e$ sufficiently small, $\answ{\overline{B^\e}}$ is contained in \blue{a small neighbourhood of $\N$, so} \begin{equation*}
        \answ{\overline{B^\e}}\subset \left\{Q\in\sym: \blue{\sqrt{2/3}-q_0\leq |Q|\leq \sqrt{2/3}+q_0}\right\}
        \, ,
    \end{equation*}
    \blue{where $q_0$ is as defined for \eqref{def:Lip_g}}. So we can use Lipschitz continuity of $g$ with the Lipschitz constant $L_g$. 
    Let $Q\in \answ{\overline{B^\e}}$ and recall from \eqref{def:proj} that $P(Q)$ is the projection of $Q$ onto $\N$.
    Then it holds that
    \begin{equation*}
        g(P(Q))-g(Q)
        \ \leq \
        L_g|P(Q)-Q|
        \ = \ 
        L_g\: \dist(Q,\N)
        \ < \ 
        L_g\e
        \, ,
    \end{equation*}
    from which
    \begin{equation*}
        g(Q)
        \ > \ 
        g(P(Q))-L_g\e
        \, .
    \end{equation*}
    Since $P(Q)$ is uniaxial and $|P(Q)-Q_\infty|^2=2(1-(n_3(Q))^2)$, we can write $g(P(Q))$ as
    \begin{align*}
        g(P(Q))
        \ &= \ 
        \sqrt\frac{3}{8}|P(Q)-Q_\infty|^2
        \ \geq \ 
        \sqrt\frac{3}{8}(|Q-Q_\infty|-|P(Q)-Q|)^2 \\
        \ &\geq \ 
        \sqrt\frac38(|Q-Q_\infty|^2-2|Q-Q_\infty|\dist(Q,\N))
        \ \geq \ 
        \sqrt\frac38(\alpha\e-2\e|Q-Q_\infty|)
        \, .
    \end{align*}
    We observe that $|Q-Q_\infty|\leq \dist(Q,\N)+\text{diam}(\N)\leq\e+2\sqrt{\frac23}\leq 2$, so
    \begin{align*}
        g(P(Q))
        \ \geq \ 
        \sqrt\frac38(\alpha-4)\e=(1+L_g)\e
        \quad \text{and}\quad 
        g(Q)
        \ \geq \
        (1+L_g)\e-L_g\e
        \ = \ 
        \e
        \, .
    \end{align*}
    Therefore $g_\mathrm{min}^\e\geq\e$, so we obtain the bound
    \begin{equation*}
        |J^\e|
        \ \leq \
        \frac{4\eta D_\omega}{\e}
        \, .
    \end{equation*}
    Next we consider $K^\e$ for which we have the following estimate:
    \begin{equation*}
        D_\omega
        \ \geq\ \eta\cEt(\Qt(\cdot,\omega),\omega)
        \ \geq\
        \frac{\eta}{\xi^2}\int_{K^\e}f(\Qt)(1-r_0\k)^2\, dr
        \ \geq\
        \frac{\eta}{4\xi^2}|K^\e|f_\mathrm{min}^\e
        \, .
    \end{equation*}
    Therefore
    \begin{equation*}
        |K^\e|\leq\frac{4\xi^2 D_\omega}{\eta f_\mathrm{min}^\e}
        \, ,
    \end{equation*}
    where $f_\mathrm{min}^\e=\min\{f(Q):Q\in C^\e\}$. \answ{We note that this minimum exists since $C^\e$ is closed and $f$ is both coercive and continuous.} Using that $f(Q)\geq C_f\dist^2(Q,\N)$ for some fixed constant $C_f>0$ \answ{from \eqref{bound:f}} and that $\dist(Q,\N)\geq\e$ on $K^\e$, we can see that
    \begin{equation}\label{bound:K_eps}
        |K^\e|\leq\frac{4\xi^2 D_\omega}{C_f\eta\e^2}\quad\text{and}\quad|I^\e|\geq r_0-4D_\omega\Big(\frac{\eta}{\e}+\frac{\xi^2}{C_f\eta\e^2}\Big)
        \, .
    \end{equation}
\end{proof}
\answ{Note that in the proof of Lemma \ref{lemma:length}, we have also shown that for any fixed $\e>0$, $|J^\e|,|K^\e|\to 0$ as $\xi,\eta\to 0$. This fact will be used with} Lemma \ref{lemma:length} to prove the lower bound.
\begin{proof}[Proof of Theorem \ref{thm:lower}]
    We begin by noticing that we can bound the energy in $\C_{r_0}(U)$ using the energy along rays,
    \begin{align*}
        \eta \exe(\Qt;\C_{r_0}(U))&\geq\eta\int_U \cEt(\Qt(\cdot,\omega),\omega)\, d\H^2(\omega)\\
        &=\eta\left(\int_{U_1} \cEt(\Qt(\cdot,\omega),\omega)\, d\H^2(\omega)+\int_{U_2} \cEt(\Qt(\cdot,\omega),\omega)\, d\H^2(\omega)\right)
        \, ,
    \end{align*}
    where
    \begin{equation*}
        U_1:=\Big\{\omega\in U:\liminf_{\xi,\eta\to0}\eta \cEt(\Qt(\cdot,\omega)\blue{,\omega})\geq \ftf\Big(1-\sqrt{1-\nu_3^2}\Big)\Big\}
        \, ,
    \end{equation*}
    and $U_2:=U\setminus U_1$. 
    Since for $\omega\in U_1$ the lower bound is trivially attained, we will devote our attention to $\omega\in U_2$. We first note that for $\H^2$-a.e. $\omega\in U_2$, $\Qt(\cdot,\omega)\in H^1((0,r_0);\sym)$, so it is continuous on $[0,r_0]$ by Sobolev embedding theorems. \blue{Recalling that}
    \begin{equation*}
        K^\e=\{r\in[0,r_0]:\Qt(r,\omega)\in C^\e\}
    \end{equation*}
    \blue{as defined in \eqref{def:k-eps}, continuity of $\Qm(\cdot,\omega)$ implies that $K^\e$}
    is closed and thus compact. \answ{We remark that it is possible that $K^\e$ is empty, but the following covering argument will still hold in this case with a few minor modifications.}
    Let $\mu>0$ be small, then by definition of the outer measure, we can find an open set $G_\mu$ such that $K^\e\subset G_\mu$ and $|G_\mu|\leq|K^\e|+\mu$.
    Also since $\Qm(0,\omega)\in\N$, we can see that $0\notin K^\e$ and $K^\e$ is closed, so there is a positive distance between $0$ and $K^\e$. Using this fact we can choose $G_\mu$ such that $\inf (G_\mu)>0$.
    It is however possible that $\sup(G_\mu)>r_0$, but we will fix this problem soon.
    \answ{Note that if $K^\e$ is empty, the notion of distance is not well-defined, but we can still choose $G_\mu$ with the above properties.}
    
    Since $G_\mu$ is an open set in $\R$, it is a countable union of open intervals.
    Applying compactness of $K^\e$, we can choose finitely many of these intervals which still cover $K^\e$ and call their union $G'_\mu$. Let $N$ be the number of disjoint open intervals which make up $G'_\mu$, then we take the endpoints of each interval and label them by $\{r_1^\e,\dots,r_{2N}^\e\}$, noting that $r_1^\e>0$ by our choice of $G_\mu$. Next, using Lemma \ref{lemma:length} we can choose $r_\omega^\e\in I^\e$ such that
    \begin{equation*}
        r_\omega^\e\leq C\left(\frac{\eta}{\e}+\frac{\xi^2}{\eta\e^2}\right)\quad\text{and}\quad |r_\omega^\e-r_k^\e|>0\text{ for all $k=1,\dots,2N$}
        \, .
    \end{equation*}
    \answ{We recall that by assumption $\eta/\xi\to \infty$, so for sufficiently small $\xi,\eta$ its guaranteed that $\roe<r_0$.} Further, \answ{we choose $r_i^\e$ with the property that $i=\max\{k=1,\dots,2N:r_k^\e<r_\omega^\e\}$, whenever this exists}. \answ{If no such $r_i^\e$ exists for the choice of $\roe$, then this means $[0,\roe]\subset I^\e\cup J^\e$, so the following argument becomes trivial.}
    Now we define $G_\mu''$ to be
    \begin{equation*}
        G_\mu'':=\{r\in G'_\mu:r<r_i^\e\}
        \, ,
    \end{equation*}
    \answ{where $G_\mu''=\emptyset$ if there is no possible $r_i^\e$ }and note that \answ{when} $G_\mu''$ \answ{is non-empty}, it still has the property that
    \begin{equation*}
        |G_\mu''|\leq|K^\e|+\mu\quad\text{and}\quad\sup(G_\mu'')<r_0
        \, .
    \end{equation*}
    \answ{An example of a} division of $[0,r_0]$ into $I^\e$, $J^\e$ and $K^\e$ as well as a possible choice for $G_\mu''$ and $\roe$ is shown in Figure~\ref{fig:subdivision} below. 
    \answ{This figure depicts a possible scenario to show how one might divide the ray, however the positions, sizes, and ordering of the regions $I^\e$, $J^\e$ and $K^\e$ can vary on different rays.}
    \begin{figure}[H]
    \begin{center}
    \begin{tikzpicture}
            \draw (-7,-0.5) node {$0$};
            \draw (-7,0) -- (7,0);
            \draw (-7,0.2) -- (-7,-0.2);
            \draw (7,0.2) -- (7,-0.2);
            \draw (-6,0.7) node {$J^\e$};
            \draw (-4.5,0.5) node {$K^\e$};
            
            \draw (-3,0.7) node {$J^\e$};
            
            \draw [pattern=north east lines] (-2,-0.24) rectangle (-1,0.24);
            \draw (-1.5,0.5) node {$K^\e$};
            \draw (0,0.7) node {$J^\e$};
            \draw (1,0) node {$\big($};
            \draw (2,0.5) node {$I^\e$};
            \draw (3.25,0.5) node {$K^\e$};
            \draw [pattern=north east lines] (3,-0.24) rectangle (3.5,0.24);
            \draw (5.25,0.5) node {$I^\e$};
            \draw (7,-0.5) node {$r_0$};

            \draw (-5.2,0) node {$\big($};
            \draw (-5.2,-0.6) node {$r_1^\e$};
            \draw[decorate, decoration={brace,mirror}] (-5.2,-1) -- (-3.8,-1);
            \draw (-3.8,0) node {$\big)$};
            \draw (-3.8,-0.6) node {$r_2^\e$};
            \draw (-4.5,-1.4) node {$G_\mu''$};
            
            \draw (-2.2,0) node {$\big($};
            \draw (-2.2,-0.6) node {$r_3^\e$};
            \draw[decorate, decoration={brace,mirror}] (-2.2,-1) -- (-0.8,-1);
            \draw (-0.8,0) node {$\big)$};
            \draw (-0.8,-0.6) node {$r_4^\e$};
            \draw (-1.5,-1.4) node {$G_\mu''$};
    
            \draw[fill] (1.5,0) circle (2pt);
            \draw (1.5,-0.5) node {$r_\omega^\e$};

            \draw [pattern=north east lines] (-5,-0.24) rectangle (-4,0.24);

            \draw[decorate, decoration={brace}] (-7,0.35) -- (-5,0.35);
            \draw[decorate, decoration={brace}] (-4,0.35) -- (-2,0.35);
            \draw[decorate, decoration={brace}] (-1,0.35) -- (1,0.35);
    \end{tikzpicture}
    \end{center} 
    \caption{Illustration of \answ{an example of} the subdivision of $[0,r_0]$ into $I^\e,J^\e,K^\e$ as used in the proof of Theorem \ref{thm:lower}. The set $G_\mu''$ is indicated for one choice of $r_\omega^\e$.}\label{fig:subdivision}
    \end{figure}
    Now for all $r\in[0,r_\omega^\e]\setminus G_\mu''$, we have that $Q_{\xi,\eta}(r,\omega)\in A^\e\cup B^\e$ and hence the projection onto $\N$ given by $P$ is well defined here.
    \begin{align*}
        \eta\cEt(\Qt(\cdot,\omega),\omega) &\geq\int_{[0,r_\omega^\e]\setminus G_\mu''}\left(\frac{\eta}{2}\Big|\frac{\partial \Qt}{\partial r}\Big|^2+\frac{1}{\eta}g(\Qt)\right)(1+r\k_1(\omega))(1+r\k_2(\omega))\, dr\\
        &\geq\int_{[0,r_\omega^\e]\setminus G_\mu''}\left(\frac{\eta}{2}\Big|\frac{\partial \Qt}{\partial r}\Big|^2+\frac{1}{\eta}g(\Qt)\right)(1-r_\omega^\e\k(\omega))^2\, dr
        \, .
    \end{align*}
    Then using Lemma 17 in \cite{canevari3d} we have the bound
    \begin{align*}
        \int_{[0,r_\omega^\e]\setminus G_\mu''}\frac{\eta}{2}\Big|\frac{\partial \Qt}{\partial r}\Big|^2\, dr\geq\int_{[0,r_\omega^\e]\setminus G_\mu''}\frac{\eta}{2}(\gamma(\Qt))^2\Big|\frac{\partial P(\Qt)}{\partial r}\Big|^2\, dr
        \, ,
    \end{align*}
    where $\gamma:\sym\to\R$ is defined by $\gamma(Q)=\lambda_1(Q)-\lambda_2(Q)$, for $\lambda_1,\lambda_2$, the two leading eigenvalues of $Q$. Note that although in \cite{canevari3d}, the lemma is stated for $|\grad Q|$ instead of the radial derivative, it is shown in the proof that the same inequality holds for a directional derivative too, so we can apply this lemma without any modifications.
    We can then use Lemma 13 from \cite{canevari3d} which gives a Lipschitz bound for $\gamma$ away from $\B=\{Q\in\sym:Q=0\ \text{or}\ \lambda_1(Q)=\lambda_2(Q)\}$ \answ{as defined in \eqref{set:B}}.
    So we have
    \begin{align*}
        \gamma(P(\Qt))-\gamma(\Qt)\leq 2|P(\Qt)-\Qt|=2\dist(\Qt,\N)\leq 2\, \answ{\max\{\e,\sqrt{\alpha\e}\}=2\sqrt{\alpha\e}}
        \, ,
    \end{align*}
    but since $\gamma(P(\Qt))=1$, this implies that, $\gamma(\Qt)\geq 1-2\answ{\sqrt{\alpha\e}}$. Therefore,
    \begin{align*}
        \int_{[0,r_\omega^\e]\setminus G_\mu''}\frac{\eta}{2}\Big|\frac{\partial \Qt}{\partial r}\Big|^2\, dr\geq\int_{[0,r_\omega^\e]\setminus G_\mu''}\frac{\eta}{2}(1-2\answ{\sqrt{\alpha\e}})^2&\Big|\frac{\partial P(\Qt)}{\partial r}\Big|^2\, dr\\
        &=\int_{[0,r_\omega^\e]\setminus G_\mu''}\eta(1-2\answ{\sqrt{\alpha\e}})^2\Big|\frac{\partial n^+}{\partial r}\Big|^2\, dr
        \, ,
    \end{align*}
    where $n^+=n^+(\Qm)$ is chosen from two possible orientations and it has the following properties:
    \begin{itemize}
        \item $n^+$ is continuous on each interval $[r_k^\e,r_{k+1}^\e]$ for $k=0,2,4,\dots,i-2$ as well as on $[r_i^\e,r_\omega^\e]$, where we define $r_0^\e=0$.
        \item $n^+(r_\omega^\e)$ is close to $e_3$, i.e.\ $n^+(r_\omega^\e)\cdot e_3>0$
        \item $n^+(r_k^\e)\cdot n^+(r_{k+1}^\e)\geq 0$ for all $k=1,3,5,\dots,i-1$.
    \end{itemize}
    This last property of $n^+$ will be very important later on in the proof. \answ{In the case that $G_\mu''=\emptyset$, $n^+$ is chosen to be continuous on $[0,\roe]$ and satisfy the second property.}
    Next we can see that
    \begin{equation*}
        \int_{[0,r_\omega^\e]\setminus G_\mu''}\frac{1}{\eta}g(\Qt)\, dr\geq \int_{[0,r_\omega^\e]\setminus G_\mu''}\frac{1}{\eta}g(P(\Qt))\, dr-\int_{[0,r_\omega^\e]\setminus G_\mu''}\frac{1}{\eta}|g(\Qt)-g(P(\Qt))|\, dr
        \, .
    \end{equation*}
    We then use that $g$ is Lipschitz as well as the Cauchy\answ{-Schwarz} inequality to get,
    \begin{align*}
        \int_{[0,r_\omega^\e]\setminus G_\mu''}\frac{1}{\eta}|g(\Qt)-g(P(\Qt))|\, dr\leq \int_{[0,r_\omega^\e]\setminus G_\mu''}&\frac{L_g}{\eta}\dist(\Qt,\N)\, dr\\
        &\leq\frac{L_g}{\eta}\left(|r_\omega^\e|\int_{[0,r_\omega^\e]\setminus G_\mu''}\dist^2(\Qt,\N)\, dr\right)^{1/2}
        \, .
    \end{align*}
    \answ{From \eqref{bound:f} we have} that $f(Q)\geq C_f\dist^2(Q,\N)$ for all $Q\in\sym$ and some constant $C_f>0$, so
    \begin{equation*}
        \int_{[0,r_\omega^\e]\setminus G_\mu''}\dist^2(\Qt,\N)\, dr\leq\frac{\xi^2}{C_f\eta}\int_{[0,r_\omega^\e]\setminus G_\mu''}\frac{\eta}{\xi^2}f(\Qt)\, dr\leq \frac{\xi^2}{C_f\eta}D_\omega\, ,
    \end{equation*}
    \blue{recalling that by definition of $U_2$, there is a subsequence of $\xi,\eta$ with $\eta\cEt(Q_{\xi,\eta}(\cdot,\omega),\omega)<D_\omega$, where $D_\omega$ is as defined in \eqref{def:D_omega}.} Since furthermore 
    \begin{equation*}
        r_\omega^\e\leq C\left(\frac{\eta}{\e}+\frac{\xi^2}{\eta\e^2}\right)
        \, ,
    \end{equation*}
    we get that
    \begin{align*}
        \int_{[0,r_\omega^\e]\setminus G_\mu''}\frac{1}{\eta}|g(\Qt)-g(P(\Qt))|\, dr&\leq 
        \frac{L_g \answ{C}}{\eta}\left(\frac{\xi^2}{C_f\eta}\right)^{1/2}\left(\frac{\eta}{\e}+\frac{\xi^2}{\eta\e^2}\right)^{1/2}D_\omega^{\answ{1/2}}\\
        &=L_gC\left(\frac{\xi^2}{\eta^2\e}+\frac{\xi^4}{\eta^4\e^2}\right)^{1/2}D_\omega^{\answ{1/2}}\to0
    \end{align*}
    as $\xi,\eta\to0$. 
    So we have that
    \begin{align*}
        \eta\cEt(\Qt(\cdot,\omega),\omega)&\geq(1-r_\omega^\e\k(\omega))^2\int_{[0,r_\omega^\e]\setminus G_\mu''} \eta(1-2\answ{\sqrt{\alpha\e}})^2\Big|\frac{\partial n^+}{\partial r}\Big|^2+\frac{1}{\eta}g(P(\Qt))\, dr-\bigO\Big(\frac{\xi}{\eta}\Big)\\
        &\geq 2(1-r_3^\e\k(\omega))^2(1-2\answ{\sqrt{\alpha\e}})\int_{[0,r_\omega^\e]\setminus G_\mu''} \Big|\frac{\partial n^+}{\partial r}\Big|\sqrt{g(P(\Qt))}\, dr-\bigO\Big(\frac{\xi}{\eta}\Big)
        \, .
    \end{align*}
    Coming from the proof of Lemma 3.4 in \cite{abl} we have the inequality that if $n=(n_1,n_2,n_3)$ and $|n|=1$, defining $N=(\sqrt{1-n_3^2},0,n_3)$ we obtain the estimate,
\begin{equation}\label{bound:n3}
    |\Dot{N}|\leq|\dot{n}|\quad\text{i.e.,}\quad\frac{|\Dot{n_3}|^2}{1-n_3^2}\leq |\Dot{n}|^2
\, .
\end{equation}
    Therefore since $|n^+|=1$, it follows from \eqref{bound:n3} that
    \begin{equation*}
        \Big|\frac{\partial n^+}{\partial r}\Big|
        \ \geq  \
        \frac{1}{\sqrt{1-(n^+_3)^2}}\Big|\frac{\partial n^+_3}{\partial r}\Big|
        \quad\text{and}\quad 
        g(P(\Qt))
        \ = \ 
        \sqrt{\frac{3}{2}}(1-(n^+_3)^2)
        \, .
    \end{equation*}
    Thus, 
    \begin{align*}
        \eta\cEt(\Qt(\cdot,\omega),\omega)&\geq2(1-r_3^\e\k(\omega))^2(1-2\answ{\sqrt{\alpha\e}})\int_{[0,r_\omega^\e]\setminus G_\mu''}\sqrt[4]{\frac{3}{2}}\Big|\frac{\partial {n}^+_3}{\partial r}\Big|\, dr-\bigO\Big(\frac{\xi}{\eta}\Big)\\
        &\geq{\ftf} (1-r_3^\e\k(\omega))^2(1-2\answ{\sqrt{\alpha\e}})\left|\int_{[0,r_\omega^\e]\setminus G_\mu''}\frac{\partial {n}^+_3}{\partial r}\, dr\right|-\bigO\Big(\frac{\xi}{\eta}\Big)
        \, .
    \end{align*}
    Applying the Fundamental Theorem of Calculus, we see that
    \begin{align*}
        \left|\int_{[0,r_\omega^\e]\setminus G_\mu''}\frac{\partial {n}^+_3}{\partial r}\, dr\right|&=\Big|n^+_3(r_\omega^\e)-n^+_3(r_i^\e)+n^+_3(r_{i-1}^\e)-\dots+n^+_3(r_1^\e)-n^+_3(0)\Big|\\
        &\geq\Big(|n_3^+(r_\omega^\e)|-|n_3^+(0)|\Big)-\sum_{k=1}^{i/2}|n^+_3(r_{2k}^\e)-n^+_3(r_{2k-1}^\e)|
        \, .
    \end{align*}
    \answ{In the case $G_\mu''=\emptyset$, the lower bound is exactly $|n_3^+(\roe)|-|n_3^+(0)|$. }We want to estimate $|n_3(r_\omega^\e)|$ using that it is close to $Q_\infty$ and close to $\N$, so we have the bound
    \begin{align*}
        |Q_{\infty}-P(\Qt(r_\omega^\e,\omega))|\leq|Q_\infty-\Qt(r_\omega^\e,\omega)|+|\Qt(r_\omega^\e,\omega)-P(\Qt(r_\omega^\e,\omega))|\leq\answ{2\sqrt{\alpha\e}}
        \, .
    \end{align*}
    Looking only at the difference between the 33-components of $Q_\infty$ and $P(\Qm)$,
    \begin{align*}
        |Q_{\infty,33}-P(\Qm(\roe,\omega))_{33}|\leq 2\sqrt{\alpha\e}
        \, .
    \end{align*}
    We can remove the absolute value, maintaining the same inequality and write the left hand side explicitly as,
    \begin{align*}
        1-({n}_3^+(\roe))^2\leq 2\sqrt{\alpha\e}
        \, .
    \end{align*}
    Rearranging this inequality yields
    \begin{equation*}
        |n_3^+(\roe)|\geq\sqrt{1-2\sqrt{\alpha\e}}
        \, .
    \end{equation*}
    Then using the fact that for any tangential $Q_b$, we have $|n_3(Q_b)|\leq \sqrt{1-\nu_3^2}$,
    \begin{equation*}
        |n_3^+(r_\omega^\e)|-|n_3^+(0)|\geq \sqrt{1-2\sqrt{\alpha\e}}-\sqrt{1-\nu_3^2}
        \, .
    \end{equation*}
    We now want to find an upper bound on each $|n_3^+(r_{2k}^\e)-n^+_3(r_{2k-1}^\e)|$. We can see that
    \begin{align*}
        |n_3^+(r_{2k}^\e)-n^+_3(r_{2k-1}^\e)|^2&\leq|n^+(r_{2k}^\e)-n^+(r_{2k-1}^\e)|^2\\
        &=2-2\left(n^+(r_{2k-1}^\e)\cdot n^+(r_{2k}^\e)\right)\\
        &\leq2-2\left(n^+(r_{2k-1}^\e)\cdot n^+(r_{2k}^\e)\right)^2\\
        &=|P(\Qt(r_{2k}^\e,\omega))-P(\Qt(r_{2k-1}^\e,\omega))|^2
        \, ,
    \end{align*}
    where we used here that $n^+$ is chosen such that the dot product is non-negative. 
    From here we use the fact that the projection $P$ is $C^1$ as seen in \cite[Lemma 12]{canevari3d}. 
    So on a compact neighbourhood of $\N$, there exists a constant $C>0$ such that
    \begin{equation*}
        |P(\Qt(r_{2k}^\e,\omega))-P(\Qt(r_{2k-1}^\e,\omega))|
        \leq C|\Qt(r_{2k}^\e,\omega)-\Qt(r_{2k-1}^\e,\omega)|
    \end{equation*}
    therefore,
    \begin{align*}\label{bound:r1r2}
        |n_3^+(r_{2k}^\e)-n^+_3(r_{2k-1}^\e)|\leq C|\Qt(r_{2k}^\e,\omega)-\Qt(r_{2k-1}^\e,\omega)|
        \, .
    \end{align*}
    Now we see that
    \begin{equation*}
        |\Qt(r_{2k}^\e,\omega)-\Qt(r_{2k-1}^\e,\omega)|=\left|\int_{r_{2k-1}^\e}^{r_{2k}^\e}\frac{\partial \Qt}{\partial r}\, dr\right|\leq\int_{r_{2k-1}^\e}^{r_{2k}^\e}\left|\frac{\partial \Qt}{\partial r}\right|\, dr
        \, .
    \end{equation*}
    Putting together all regions $[r_{2k-1}^\e,r_{2k}^\e]$, we see that,
    \begin{equation*}
        \sum_{k=1}^{i/2}|n_3^+(r_{2k}^\e)-n_3^+(r_{2k-1}^\e)|\leq C\int_{G_\mu''}\norm{\frac{\partial \Qm}{\partial r}}\, dr
        \, ,
    \end{equation*}
     and using the Cauchy-\answ{Schwarz} inequality \answ{and that $|G_\mu''|\leq |K^\e|+\mu$}
    \begin{equation*}
        \int_{G_\mu''}\left|\frac{\partial \Qm}{\partial r}\right|\, dr\leq|G_\mu''|^{1/2}\left(\int_0^{r_0}\left|\frac{\partial \Qm}{\partial r}\right|^2\, dr\right)^{1/2}\leq\Big(|K^\e|+\mu\Big)^{1/2}\Big(\frac{2D_\omega}{\eta}\Big)^{1/2}
        \, .
    \end{equation*}
    \answ{In the above chain of inequalities we are using the definition of $U_2$ and $\cEt$ to see that
    \begin{equation*}
        \int_0^{r_0}\norm{\frac{\partial \Qm}{\partial r}}^2\, dr=\frac{2}{\eta}\left(\int_0^{r_0}\frac{\eta}{2}\norm{\frac{\partial \Qm}{\partial r}}^2\, dr\right)\leq\left(\frac{2}{\eta}\right)\eta\cEt(\Qm(\cdot,\omega),\omega)<\frac{2D_\omega}{\eta}
        \, .
    \end{equation*}}
    Now \answ{we estimate $|K^\e|$ using \eqref{bound:K_eps} to} see that
    \begin{equation*}
        \sum_{k=1}^{i/2}|n_3^+(\Qm(r_{2k}^\e,\omega))-n_3^+(\Qm(r_{2k-1}^\e,\omega))|\leq C\Big(\frac{\xi^2}{\eta^2\e^2}+\frac{\mu}{\eta}\Big)^{1/2}
    \end{equation*}
    thus
    \begin{align*}
        \left|\int_{[0,r_\omega^\e]\setminus G_\mu}\frac{\partial {n}^+_3(\Qt)}{\partial r}\, dr\right|\geq \sqrt{1-2\sqrt{\alpha\e}}-\sqrt{1-\nu_3^2}-C\Big(\frac{\xi^2}{\eta^2\e^2}+\frac{\mu}{\eta}\Big)^{1/2}
        \, .
    \end{align*}
    We get the following bound on the energy along a ray
    \begin{align*}
        \eta&\cEt(\Qm(\cdot,\omega),\omega)\\
        &\geq\ftf(1-\roe\k(\omega))^2(1-2\answ{\sqrt{\alpha\e}})\left(\sqrt{1-2\sqrt{\alpha\e}}-\sqrt{1-\nu_3^2}-C\Big(\frac{\xi^2}{\eta^2\e^2}+\frac{\mu}{\eta}\Big)^{1/2}\right)-\bigO\Big(\frac{\xi}{\eta}\Big)
        \, ,
    \end{align*}
    \answ{recalling the assumption that $\xi/\eta\to 0$ as $\xi,\eta\to 0$.} From here we take $\mu\to0$, then using that $(1-r_\omega^\e\k(\omega))^2\to1$ as $\xi,\eta\to0$ since $r_\omega^\e\to0$, we can see that
    \begin{align*}
        \liminf_{\xi,\eta\to0}\: \eta \exe(\Qt;\C_{r_0}(U_2))\geq\int_{U_2}\ftf(1-2\answ{\sqrt{\alpha\e}})\Bigg(\sqrt{1-2\sqrt{\alpha\e}}-\sqrt{1-\nu_3^2}\Bigg)\, d\H^2
        \, .
    \end{align*}
    Now taking $\e\to0$ we have
    \begin{align*}
        \liminf_{\e\to0}\left(\liminf_{\xi,\eta\to0}\: \eta\exe(\Qt;\C_{r_0}(U_2))\right)&\geq\int_{U_2}\ftf\Big(1-\sqrt{1-\nu_3^2}\Big)\, d\H^2
        \, .
    \end{align*}
    Finally we can combine the integrals on $U_1$ and $U_2$ to recover,
    \begin{equation*}
        \liminf_{\xi,\eta\to0}\: \eta\exe(\Qt;\C_{r_0}(U))\geq\int_{U}\ftf \Big(1-\sqrt{1-\nu_3^2}\Big)\, d\H^2
        \, .
    \end{equation*}
\end{proof}
This lower bound characterizes the energy in cones of height $r_0$, but in order to rule out the possibility of line defects in Section \ref{sec:lines}, we will need a lower bound estimate which works for open sets rather than cones.
\begin{corollary}\label{cor:open}
    Let $\Qm$ minimize $\exe$ with $\Qm\in\A$. If
    \begin{equation*}
        \frac{\eta}{\xi}\to\infty\quad\text{as}\quad\xi,\eta\to0
        \, ,
    \end{equation*}
    then for any open set $U\subset\mathbb{R}^3$,
    \begin{equation*}
        \liminf_{\xi,\eta\to0}\: \eta\exe(\Qm;U\cap\Om)\geq\int_{U\cap\M}\ftf\Big(1-\sqrt{1-\nu_3^2}\Big)\, d\H^2
    \, .
    \end{equation*}
\end{corollary}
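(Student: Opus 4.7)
The plan is to derive the corollary from Theorem~\ref{thm:lower} by a covering argument. I would first observe that the proof of Theorem~\ref{thm:lower} goes through with $r_0$ replaced by any fixed $\rho \in (0, r_0]$: the radial length $\roe$ obtained via Lemma~\ref{lemma:length} satisfies $\roe \leq C(\eta/\e + \xi^2/(\eta\e^2))$, which tends to $0$ as $\xi,\eta \to 0$, so eventually $\roe < \rho$. All subsequent estimates in the proof only use that $\roe \to 0$ (the prefactors $(1 - \roe \k)^2$ and $(1 - \rho\k)^2$ tend to $1$), and the Lipschitz estimates for $g$ and $\gamma$ depend on the chosen neighbourhood of $\N$ rather than on $\rho$. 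Hence, for any measurable $V \subset \M$ and any $\rho \in (0, r_0]$,
\begin{equation*}
    \liminf_{\xi,\eta\to 0} \eta \exe(\Qm; \C_\rho(V))
    \ \geq \
    \ftf \int_V \Big(1 - \sqrt{1 - \nu_3^2}\Big)\, d\H^2.
\end{equation*}

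Next, given an open $U \subset \R^3$, I would construct a covering of $U \cap \M$ by such cones. For each $\omega \in U \cap \M$, openness of $U$ gives a radius $\rho_\omega \in (0, r_0/3]$ with $B_{3\rho_\omega}(\omega) \subset U$; a quick triangle inequality argument then shows $\C_{\rho_\omega}(B_{\rho_\omega}(\omega) \cap \M) \subset U \cap \Om$. Using second countability of $\M$, extract a countable subcover $\{B_{\rho_i}(\omega_i) \cap \M\}_{i \in \mathbb{N}}$ of $U \cap \M$, and form the disjoint measurable partition $V_i := (B_{\rho_i}(\omega_i) \cap \M) \setminus \bigcup_{j < i} V_j$. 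Because the coordinates $(r,\omega)$ are unique on $\C_{r_0}(\M)$, the cones $\C_{\rho_i}(V_i)$ are pairwise disjoint subsets of $U \cap \Om$.

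Combining these pieces, for any $K \in \mathbb{N}$,
\begin{equation*}
    \eta \exe(\Qm; U \cap \Om) \ \geq \ \sum_{i=1}^K \eta \exe(\Qm; \C_{\rho_i}(V_i)).
\end{equation*}
Applying superadditivity of $\liminf$ and then the modified lower bound to each summand yields
\begin{equation*}
    \liminf_{\xi,\eta \to 0} \eta \exe(\Qm; U \cap \Om)
    \ \geq \
    \sum_{i=1}^K \ftf \int_{V_i} \Big(1 - \sqrt{1-\nu_3^2}\Big) d\H^2,
\end{equation*}
and sending $K \to \infty$ via monotone convergence (the integrand is nonnegative) gives the desired bound with integration over all of $U \cap \M$.

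I expect the only real subtlety to be verifying that the proof of Theorem~\ref{thm:lower} adapts cleanly to $\rho < r_0$; once that is granted, the remainder is a standard open-covering argument and requires no new analytic ingredient.
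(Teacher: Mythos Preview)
Your proposal is correct and follows essentially the same approach as the paper: both reduce to the observation that Theorem~\ref{thm:lower} holds with $r_0$ replaced by any $\rho\in(0,r_0]$, then decompose $U\cap\M$ into countably many disjoint pieces whose cones of suitable height lie inside $U\cap\Om$, apply the cone lower bound to each, and pass to the limit. The only cosmetic difference is the bookkeeping of the decomposition---the paper partitions $U\cap\M$ according to the level sets $\{r_0/(k+1)<\rho(\omega)\leq r_0/k\}$ of the maximal-ray-length function $\rho(\omega)=\sup\{r'\in(0,r_0]:\omega+r\nu(\omega)\in U\text{ for all }r<r'\}$, whereas you use a Lindel\"of cover by balls made disjoint---but the analytic content is identical.
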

{Note that in the case where $U\cap\M$ is empty, this lower bound is trivial, but still holds.}
We remark that although Theorem \ref{thm:lower} deals with cones of height $r_0$, the proof of this Theorem {does} not rely on the specific value of $r_0$. In fact, we would obtain the same lower bound for $\C_\rho(U)$ for $\rho\in(0,r_0]$. This fact will be used in proving Corollary \ref{cor:open}.
\begin{proof}[Proof of Corollary \ref{cor:open}]
    Every point $\omega\in U\cap\M$ is an interior point of $U$, so there exists a ball of radius $r_\omega\in(0,r_0)$ given by $B(\omega,r_\omega)$ such that $B(\omega,r_\omega)\subset U$. Define $\rho:U\cap\M\to(0,r_0)$ by
    \begin{equation*}
        \rho(\omega)=\sup\big\{r'\in(0,r_0]:\big(\omega+r\nu(\omega)\big)\in U\text{ for all $r\in(0,r')$}\big\}
        \, ,
    \end{equation*}
    then $\rho(\omega)\geq r_\omega>0$ for all $\omega\in U\cap\M$.
    Using that $\rho$ is measurable, we divide $U\cap \M$ into countably many disjoint measurable sets $U_k$ defined by
    \begin{equation*}
        U_k=\Big\{\omega\in U\cap\M:\frac{r_0}{k+1}<\rho(\omega)\leq\frac{r_0}{k}\Big\}
        \, ,
    \end{equation*}
    see also Figure \ref{fig:Uk}.
    Note that by construction $U\cap\M=\bigcup_{k=1}^\infty U_k$. 
    Then for any $K\in\mathbb{N}$,
    \begin{equation*}
        \eta\exe(\Qm;U\cap\Om)\geq\sum_{k=1}^K\eta\exe(\Qm;\C_{r_0/k+1}(U_k))
    \end{equation*}
    and therefore applying Theorem \ref{thm:lower} where $r_0$ is replaced by $\frac{r_0}{k+1}$ we obtain
    \begin{align*}
        \liminf_{\xi,\eta\to0}\: \eta\exe(\Qm;U\cap\Om)\geq\sum_{k=1}^K\int_{U_k}\ftf\Big(1-\sqrt{1-\nu_3^2}\Big)\, d\H^2
        \, .
    \end{align*}
    The left side does not depend on $K$, so we can take $K\to\infty$ to see that
    \begin{align*}
        \liminf_{\xi,\eta\to0}\: \eta\exe(\Qm;U\cap\Om)&\geq\sum_{k=1}^\infty\int_{U_k}\ftf\Big(1-\sqrt{1-\nu_3^2}\Big)\, d\H^2\\
        &=\int_{U\cap\M}\ftf\Big(1-\sqrt{1-\nu_3^2}\Big)\, d\H^2
        \, .
    \end{align*}
\end{proof}

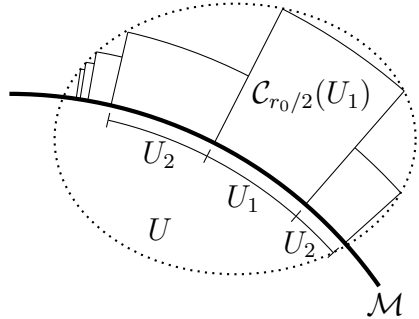
\begin{figure}[H]
    \centering
    \begin{tikzpicture}[scale=2]
            \draw [ultra thick] (90:3) arc (90:35:3);
            \draw [thick, dotted] (1.5,2.7) ellipse (1.2 and 0.9);
            \draw (63:4) arc (63:49:4);
            \draw (77:3.5) arc (77:63:3.5);
            \draw (49:3.5) arc (49:42:3.5);
            \draw (80:3.33) arc (80:77:3.33);
            \draw (81:3.25) arc (81:80:3.25);
            \draw (81.5:3.2) arc (81.5:81:3.2);

            \draw (63:2.9) arc (63:49:2.9);
            \draw [|-|] (77:2.9) arc (77:63:2.9);
            \draw [|-|] (49:2.9) arc (49:42:2.9);

            \draw (56:2.78) node {$U_1$};
            \draw (46:2.78) node {$U_2$};
            \draw (69:2.78) node {$U_2$};
            \draw (56:3.6) node {$\C_{r_0/2}(U_1)$};

            \draw (63:3) -- (63:4);
            \draw (49:3) -- (49:4);
            \draw (77:3) -- (77:3.5);
            \draw (42:3) -- (42:3.5);
            \draw (80:3) -- (80:3.33);
            \draw (81:3) -- (81:3.25);
            \draw (81.5:3) -- (81.5:3.2);

            \draw (2.5,1.6) node {$\M$};
            \draw (1,2.1) node {$U$};
        \end{tikzpicture}
    \caption{Cross section of cones from $U_k$ for $k=1,\dots,5$.}
    \label{fig:Uk}
\end{figure}

\section{The Upper Bound}\label{sec:upper}
This section is dedicated to proving Theorem \ref{thm:upper}. 
To this goal, we first construct a recovery sequence of admissible functions which attains the limiting energy from the lower bound. 
We define the ``optimal'' boundary value $Q_b^*=v^*\otimes v^*-\frac{1}{3}I$, where
\begin{equation}\label{def:optimal}
    v^*=\frac{e_3-\nu_3\nu}{|e_3-\nu_3\nu|},\quad \nu\neq\pm e_3
    \, .
\end{equation}
This is optimal in the sense that if $\nu(\omega)\neq\pm e_3$, then
\begin{equation*}
    \ftf(1-|n_3(Q_b(\omega))|)\geq \ftf (1-|n_3(Q_b^*(\omega))|)
\end{equation*}
for all $Q_b$ which are uniaxial and satisfy
\begin{equation*}
    n(Q_b(\omega))\cdot\nu(\omega)=0
    \, .
\end{equation*}
We can also see that this boundary data is consistent with the lower bound from Theorem~\ref{thm:lower} since $|n_3(Q_b^*)|=\sqrt{1-\nu_3^2}$. 
Despite the fact that $Q_b^*$ is optimal {at each point $\omega\in\M$} in the limit $\xi,\eta\to 0$, we cannot use this fixed boundary data to construct a recovery sequence, as it would require the construction of line defects that are energetically too costly. 
The energy of the resulting upper bound could be larger than the lower bound. 
Instead, we can take advantage of the degree of freedom in our boundary condition to replace the potential line defects by point defects which have a negligible energetic cost. 
With this construction we are able to achieve the same energy as in the lower bound.

\answ{We construct the recovery sequence in two steps: first we define a family of vector fields on $\M$ which will serve as our boundary data, then we extend the boundary data to the exterior region $\Omega$ in such a way to achieve close-to-optimal cost. 
The first step is the goal of Lemma \ref{bdry-data}, while the second step is addressed in Proposition \ref{recov}.

Before stating the first lemma, we introduce some notation. Let
\begin{align*}
    \M^+:=\{\omega\in\M:\nu_3(\omega)>0\}\, ,\\
    \M^0:=\{\omega\in\M:\nu_3(\omega)=0\}
    \, ,\\
    \M^-:=\{\omega\in\M:\nu_3(\omega)<0\}
    \, ,
\end{align*}
then we will construct a vector field $v^\d$ (which is close to $v^*$ but avoids line defects) on $\M^+\cup\M^0$ and the construction on the lower half $\M^-$ will follow from an analogous argument.}

\answ{
\begin{lemma}\label{bdry-data}
    Let $\M$ be a $C^{2,1}$ connected, compact, oriented 2-manifold without boundary. 
    Then for $\d>0$ sufficiently small, there exists a family of unit tangent vector fields $v^\d$ defined on $\M^+\cup\M^0$ such that $v^\d=e_3$ on $\M^0$ and for any sequence $\d_k\to 0$
    \begin{equation*}
        v_3^{\d_k}\to\sqrt{1-\nu_3^2}\quad \text{$\H^2$-a.e. on $\M^+\cup\M^0$ as $k\to \infty$.}
    \end{equation*}
    Moreover, $v^\d$ can be chosen to be Lipschitz away from finitely many isolated point discontinuities $\{p_j^\d\}_{j=1}^{N_\d}\subset \M^+$ and such that for any $\omega\in \M^+$ in a sufficiently small neighbourhood around each $p_j^\d$ it holds that
    \begin{equation*}
        |\grad_\omega v^\d(\omega)|\leq\frac{C_\d}{\dist_\M(\omega,p_j^\d)}
    \end{equation*}
    for some constant $C_\d>0$. 
\end{lemma}}
\begin{proof}
    \answ{We begin by defining the set $\M_d$ to be 
    \begin{equation*}
        \M_d:=\{\omega\in\M^+:\nu(\omega)=e_3\}
        \, .
    \end{equation*}
    This is precisely the set where we cannot uniquely define an ``optimal'' boundary value.} Using that $\M_d$ is compact, we can cover it with finitely many open balls in \blue{$\M^+$} of radius $\d>0$ for $\d$ \blue{sufficiently} small.
    The union of these balls forms $I^\d-$many \blue{connected components} which we will refer to as $\cR_i^\d$ for $i=1,\dots,I^\d$ \blue{and for simplicity of notation, we define
    \begin{equation}\label{def:Dd}
        \Dd:=\bigcup_{i=1}^{I^\d}\cR_i^\d
        \, ,
    \end{equation}
    so $\Dd$ is an open set which covers $\M_d$ such that $0<\dist_\M(\partial \Dd,\M_d)\leq\d$.}
    
    \answ{The idea behind this construction is that outside of $\Dd$, we can define $v^\d$ by a projection of $e_3$ onto the tangent plane at each point (as in \eqref{def:optimal}), while inside $\Dd$, we can artificially place point defects in each $\cR_i^\d$. 
    Finally, we extend $v^\d$ via a Lipschitz extension from small balls around each defect to the region outside of $\Dd$.}
    
    On the set, \blue{$\M^+\cup \M^0\setminus\Dd$}, we can define $\vd=v^*$, where $v^*$ is the optimal boundary condition defined in \eqref{def:optimal}, so it remains to define $\vd$ on each $\cR_i^\d$.
    
    Take any arbitrary $\cR_i^\d$, for some $i=1,\dots, I^\d$, then since $v^*$ is continuous \blue{on $\M^+\setminus\M_d$}, in particular $v^*$ is continuous on $\partial \cR_i^\d$ \answ{and also of norm $1$} and thus $\deg(v^*;\partial \cR_i^\d)=d_i$ for some $d_i\in \mathbb{Z}$, \answ{where the degree is the sum of the degrees on each component of $\partial\cR_i^\d$ taking into account the orientation induced by $\cR_i^\d$, see \cite[p. 121ff]{hirsch}}. 
    We want to balance out the topological charge around $\partial\cR_i^\d$ by artificially inserting sufficiently many point defects in the interior of $\cR_i^\d$.
    So if $d_i=0$, then no defects are needed. 
    For $d_i\neq 0$, choose $|d_i|-$many points $p^i_j$ in $\cR_i^\d$, for $j=1\dots,|d_i|$, such that $\dist_\M(p^i_j,\partial \cR_i^\d)>0$.
    Then there exists $q_i\in\mathbb{N}$ such that 
    \begin{equation*}
        \dist_\M\left(B\left(p^i_j,\frac{\d}{q_i}\right),\partial \cR_i^\d\right)>0\quad\text{and}\quad\dist_\M\left(B\left(p^i_j,\frac{\d}{q_i}\right), B\left(p^i_k,\frac{\d}{q_i}\right)\right)>0
        \, ,
    \end{equation*}
    for all $j,k=1,\dots,|d_i|$.
    For simplicity, we let $\d_i':=\d/q_i$ \blue{and
    \begin{equation*}
        \P_i:=\bigcup_{j=1}^{|d_i|}B(p_j^i,\d_i')
        \, .
    \end{equation*}}
    On each ball $\overline{B(p^i_j,\d_i')}$ we define a point defect of degree $\sgn(d_i)$ as follows. \answ{For all $\tau=(\tau_1,\tau_2)\in T_{p_j^i}\M$, let} $\widetilde m:T_{p_j^i}\M\to\R^3$ be
    \begin{equation*}
        \widetilde{m}(\tau)=(\tau_1,\sgn(d_i)\tau_2,0)
        \, ,
    \end{equation*}
    so \blue{$\widetilde{m}/|\widetilde{m}|$} could be either a $+1$ defect or a $-1$ defect.
    We then \answ{define a new map $m$ on $\overline{B(p_j^i,\d_i')}$ by the pullback $(\d_i^{-1}\exp_{p^i_j}^{-1})^*\widetilde{m}$} so that $m:\overline{B(p^i_j,\d_i')}\to\R^3$ \answ{is given by}
    \begin{equation*}
        m(\omega)=\widetilde m\left(\frac{1}{\d_i'}\exp_{p^i_j}^{-1}(\omega)\right)
        \, .
    \end{equation*}
    \answ{We remark that $m$ is well-defined since the exponential map is a bi-Lipschitz homeomorphism \cite{biLip}. As well, we note that }$m(\omega)$ \blue{is not necessarily in $T_\omega\M$ for $\omega\neq p_j^i$}, \answ{but we are constructing a unit tangent vector field}, so to fix this, we project $m$ onto the tangent space and re-scale to unit length by defining $\hat{u}:\overline{B(p^i_j,\d_i')}\answ{\setminus\{p_j^i\}}\to\S^2$ to be
    \begin{equation}\label{hat-u}
        \hat{u}(\omega)=\frac{m(\omega)-(m(\omega)\cdot\nu(\omega))\nu(\omega)}{|m(\omega)-(m(\omega)\cdot\nu(\omega))\nu(\omega)|}
        \, .
    \end{equation}
    \answ{This projection is done on a small region around \answ{the defect point }$p_j^i$, so it is well-defined since $m(\omega)\neq \nu(\omega)$ for any $\omega\in \overline{B(p_j^i,\d_i')}\setminus\{p_j^i\}$. Therefore we let $v^\d=\hat{u}$ on $\overline{B(p_j^i,\d_i')}\setminus\{p_j^i\}$ for each $i=1,\dots, I^\d$ and $j=1,\dots, |d_i|$.}
    
    \answ{So far we have defined $v^\d$ on $\M^+\cup\M^0\setminus\cR^\d$ and we have defined $v^\d$ on $\P_i$ for $i=1,\dots, I^\d$. It remains to define $v^\d$ on $\cR_i^\d\setminus\P_i$ for $i=1\dots, I^\d$ and this will be done by a Lipschitz extension as follows.}
    
    \answ{On $\partial B(p^i_j,\d_i')$ we have} $\vd=\hat{u}$, so
    \begin{equation*}
        \deg(\vd; \partial B(p^i_j,\d_i'))
        =
        \sgn(d_i)
        \, .
    \end{equation*}
    We also \blue{have} $\vd=v^*$ on $\partial \cR_i^\d$ and therefore, by construction,
    \begin{align*}
        \deg\left(\vd;\partial\left(\cR_i^\d\setminus\P_i\right)\right)=\blue{d_i-|d_i|\sgn(d_i)}=0
        \, ,
    \end{align*}
    see also Figure \ref{fig:enter-label}.
    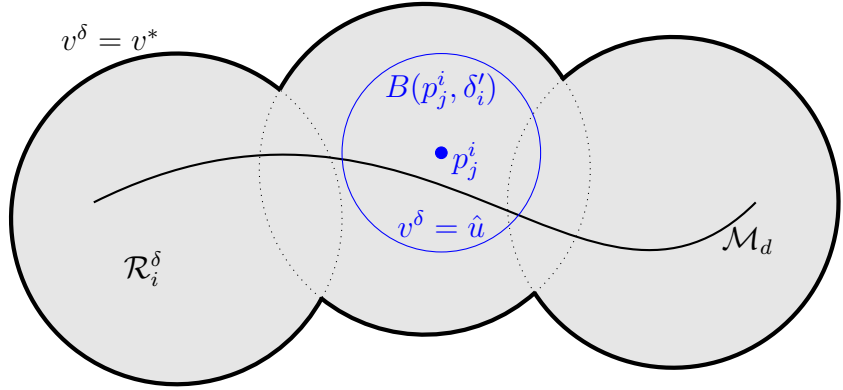
\begin{figure}
        \centering
        \begin{tikzpicture}[scale=2.2]
        \draw[fill=gray!20!white, draw=gray!20!white] (-1.5,-0.1) circle (1);
        \draw[fill=gray!20!white, draw=gray!20!white] (0,0.2) circle (1);
        \draw[fill=gray!20!white, draw=gray!20!white] (1.5,0) circle (1);
        
        \draw [thick](-2,0) .. controls (0,1) and (1,-1) .. (2,0);
        
        \draw [blue] (0.1,0.3) circle (0.6);
        \draw [blue, fill] (0.1,0.3) circle (1pt);
        
        \draw[dotted] (0,0.2) circle (1);
        \draw[dotted] (-1.5,-0.1) circle (1);
        \draw[dotted] (1.5,0) circle (1);

        \draw [ultra thick] ({0+cos(33.5)},{0.2+sin(33.5)}) arc (33.5:151:1);
        \draw [ultra thick] ({0+cos(-48)},{0.2+sin(-48)}) arc (-48:-129:1);
        \draw [ultra thick] ({-1.5+cos(51)},{-0.1+sin(51)}) arc (51:331.5:1);
        \draw [ultra thick] ({1.5+cos(-147)},{sin(-147)}) arc (-147:132.3:1);
        \draw (1.95,-0.23) node {$\M_d$};
        \draw (-1.7,-0.4) node {$\mathcal{R}_i^\delta$};
        \draw[blue] (0.1,-0.13) node {$v^\delta=\hat{u}$};
        \draw[blue] (0.25,0.25) node {$p_j^i$};
        \draw (-1.9,1) node {$v^\d=v^*$};
        \draw[blue] (0.1,0.68) node {$B(p_j^i,\delta'_i)$};
    \end{tikzpicture}
        \caption{Schematic drawing of the region $\cR_i^\d \subset\M$ around a section of $\M_d$ where the degree of $v^*$ around $\partial \cR_i^\d$ is $\pm1$ and one point singularity of matching degree has been placed at $p_j^i$.}
        \label{fig:enter-label}
    \end{figure}
    As the patches $\cR_i^\d\setminus\blue{\P_i}$ are connected we can use Theorem 1.8 from \cite[p. 126]{hirsch} to construct
    a continuous\blue{, but not necessarily Lipschitz} extension $v_0:\overline{\cR_i^\d\setminus\blue{\P_i}}\to\S^2$ which agrees with $\vd$ on the boundary \answ{and remains tangent at each point in $\cR_i^\d\setminus\P_i$. We remark that this tangency condition of the extension makes it necessary that the degree around the boundary be zero in order for $v_0$ to exist}.
    Further for {small} $\e>0$, one can adapt Theorem 10.16  from \cite{lee} to the case of a \answ{$C^{2,1}-$}manifold to show that there exists a Lipschitz function $v_\e:\overline{\cR_i^{\d}\setminus\blue{\P_i}}\to \R^3$ such that
    \begin{equation*}
        |v_0(\omega)-v_\e(\omega)|<\e,\quad\text{for all}\quad \omega\in \cR_i^{\d}\setminus\blue{\P_i}
    \end{equation*}
    and $v_\e$ agrees with $v^*$ on $\partial(\cR_i^\d)$ as well as $\hat{u}$ on \blue{$\partial\P_i$}. Since $v_\e\in\R^3$, and need not be in $T_\omega\M$, we again must  project onto the tangent space and re-normalize the vector field, so we define
    \begin{equation*}
        \vd(\omega)=\frac{v_\e(\omega)-(v_\e(\omega)\cdot\nu(\omega))\nu(\omega)}{|v_\e(\omega)-(v_\e(\omega)\cdot\nu(\omega))\nu(\omega)|},\quad\text{for}\quad \blue{\omega\in \cR_i^\d\setminus\P_i}
        \, .
    \end{equation*}
    \answ{This projection is well-defined and Lipschitz since $v_\e$ is uniformly close to $v_0$ and $v_0$ is orthogonal to $\nu$, so the denominator is bounded away from zero.}
    
    Now that $\vd$ has been chosen on all of $\M^+$ \answ{it remains to check that $v^\d$ has the desired properties. Consider the region $\M^+\cup\M^0\setminus\Dd$, then here we have
    \begin{equation*}
        v^\d=v^*=\frac{e_3-\nu_3\nu}{|e_3-\nu_3\nu|}=\frac{e_3-\nu_3\nu}{\sqrt{1-\nu_3^2}}
    \end{equation*}
    so taking a partial derivative in local coordinates $(\omega_1,\omega_2)$ for $k=1,2$ we have
    \begin{equation*}
        \frac{\partial v^\d}{\partial \omega_k}=\frac{-\left(\frac{\partial \nu_3}{\partial \omega_k}\nu+\nu_3\frac{\partial \nu}{\partial \omega_k}\right)\sqrt{1-\nu_3^2}-(e_3-\nu_3\nu)(1-\nu_3^2)^{-1/2}\nu_3\frac{\partial\nu_3}{\partial \omega_k}}{1-\nu_3^2}
        \, .
    \end{equation*}
    Therefore
    \begin{equation*}
        \norm{\frac{\partial v^\d}{\partial \omega_k}}=\frac{1}{\sqrt{1-\nu_3^2}}\left(\norm{\frac{\partial \nu_3}{\partial \omega_k}}+|\nu_3|\norm{\frac{\partial \nu}{\partial \omega_k}}\right)+\frac{|\nu_3|}{1-\nu_3^2}\norm{\frac{\partial \nu_3}{
        \partial \omega_k}}
        \, ,
    \end{equation*}
    and using that $\dist(M^+\cup M^0\setminus\Dd,\M_d)>0$, we have that there exists a constant $C_\d>0$ such that
    \begin{equation*}
        \frac{1}{\sqrt{1-\nu_3^2}}\leq C_\d\quad\text{and}\quad\frac{\nu_3}{1-\nu_3^2}\leq C_\d
        \, .
    \end{equation*}
    Thus we obtain the following bounds:
    \begin{equation*}
        \norm{\frac{\partial v^\d}{\partial \omega_k}}\leq 3C_\d\k\quad\text{and}\quad|\grad_\omega v^\d|\leq 3\sqrt{2}C_\d\k
        \, ,
    \end{equation*}
    so $v^\d$ is Lipschitz on $\M^+\cup \M^0\setminus\Dd$ and we also note that by construction, $v^\d$ is Lipschitz on $\cR_i^\d\setminus\P_i$ for each $i=1,\dots, I^\d$. Now on each $B(p_j^i,\d_i')$ we have
    \begin{equation*}
        v^\d(\omega)=\hat{u}(\omega)=\frac{m(\omega)-(m(\omega)\cdot\nu(\omega))\nu(\omega)}{|m(\omega)-(m(\omega)\cdot\nu(\omega))\nu(\omega)|}
        \, .
    \end{equation*}
    so taking a partial derivative in $\omega_i$} we can see that
    \begin{align*}
        \frac{\partial \hat{u}}{\partial \omega_k}=\frac{\Big(\frac{\partial }{\partial \omega_k}(m-(m\cdot\nu)\nu)\Big)}{|m-(m\cdot\nu)\nu|}-\frac{(m-(m\cdot\nu)\nu)\Big((m-(m\cdot \nu)\nu)\cdot\big(\frac{\partial}{\partial \omega_k}(m-(m\cdot \nu)\nu)\big)\Big)}{|m-(m\cdot\nu)\nu|^\answ{3}}
        \, .
    \end{align*}
    Thus,
    \begin{align*}
        \left|\frac{\partial \hat{u}}{\partial \omega_k}\right|
        \leq
        \frac{2|\frac{\partial}{\partial \omega_k}(m-(m\cdot\nu)\nu)|}{|m-(m\cdot\nu)\nu|}
        \leq
        \frac{C(|\frac{\partial m}{\partial \omega_k}|+|\frac{\partial \nu}{\partial \omega_k}|)}{|m|-|m\cdot\nu|}
        =
        \frac{C(|\frac{\partial m}{\partial \omega_k}|+\k)}{|m|(1-|\cos\theta|)}
        \, ,
    \end{align*}
    where $\theta$ is the angle between $m$ and $\nu$. Since $\nu$ is close to $e_3$ and $m\cdot e_3=0$, there exists a constant $C_\d>0$ such that $1-\cos\theta\geq C_\d$ for all $\omega\in B(p_j^i,\d'_i)$. So,
    \begin{equation*}
        \norm{\frac{\partial \hat{u}}{\partial \omega_k}}\leq \frac{C_\d}{|m|}\left(\norm{\frac{\partial m}{\partial \omega_k}}+\k\right)\leq \frac{C_\d}{|m|}(|\grad_\omega m|+\k)
        \, .
    \end{equation*}
    Next we have by chain rule that
    \begin{equation*}
        |\grad_\omega m|^2\leq|\grad_\t \widetilde{m}|^2|\grad_\omega \t|^2\leq \frac{C}{(\d'_i)^2}|\grad_\omega \exp_{p_j^i}^{-1}(\omega)|^2\leq\frac{C}{(\d'_i)^2}
        \, ,
    \end{equation*}
    \answ{where we use that the exponential map on a $C^{2,1}$ manifold is bi-Lipschitz \cite{biLip} }and therefore,
    \begin{equation*}
        \norm{\frac{\partial \hat{u}}{\partial \omega_k}}\leq \frac{C_\d}{|m|}\left(\frac{C}{\d'_i}+\k\right)\leq\frac{C_\d}{\d'_i|m|}\quad\text{and}\quad|\grad_\omega \hat{u}|^2\leq\frac{C_\d}{(\d'_i)^2|m|^2}
    \end{equation*}
    for some constant $C_\d>0$. Using the relation $\d'_i|m|=\dist_\M(\omega,p_j^i)$,
    \begin{equation}\label{bound:om1:tan}
        |\grad_\omega\hat{u}|^2\leq\frac{C_\d}{\dist_\M^2(\omega,p_j^i)}
        \, .
    \end{equation}
    \answ{It follows from this inequality that $v^\d$ is Lipschitz on the region $B(p_j^i,\d_i')\setminus B(p_j^i,r)$ for any $0<r<\d_i'$, so $v^\d$ is in fact Lipschitz away from the points $\{p_j^i\}$ as required.
    
    All that remains is to show that for any sequence $\d_k\to 0$, we have that $v^{\d_k}_3\to\sqrt{1-\nu_3^2}$ $\H^2$-a.e.\ as $k\to \infty$. We define
    \begin{equation*}
        \fP:=\bigcup_{k=1}^\infty\bigcup_{i=1}^{I^{\d_k}}\bigcup_{j=1}^{|d_i|}\{p_j^i\}
        \, ,
    \end{equation*}
    then $\fP$ is a countable union of sets of $\H^2$-measure zero, so $\H^2(\fP)=0$. We will show that we have pointwise convergence on $\M^+\cup\M^0\setminus\fP$. By definition, $v_3^*=\sqrt{1-\nu_3^2}$, so if $\omega\in\M^+\cup\M^0\setminus (\cR^{\d_k}\cup\fP)$, we already have that $v^{\d_k}_3=\sqrt{1-\nu_3^2}$. From \eqref{def:Dd} we have 
    \begin{equation*}
        0<\dist_{\M}(\partial \cR^{\d_k},\M_d)\leq \d_k
        \, ,
    \end{equation*}
    so if $\omega\in\M^+\cup\M^0\setminus(\M_d\cup\fP)$, then for sufficiently large $k$, we also have $\omega\not\in \cR^{\d_k}$, which means we are in the previous case where $v_3^{\d_k}=\sqrt{1-\nu_3^2}$. The final case is if $\omega\in\M_d\setminus\fP$, where 
    \begin{equation*}
        v^{\d_k}_3=0\quad\text{and}\quad \sqrt{1-\nu_3^2}=0\quad\text{on $\M_d\setminus\fP$}
        \, ,
    \end{equation*}
    since $v^{\d_k}$ is in the tangent space. Thus $v_3^{\d_k}=\sqrt{1-\nu_3^2}$ on $\M_d\setminus\fP$ and
    \begin{equation*}
        v^{\d_k}_3\to\sqrt{1-\nu_3^2}\quad \text{on $\M^+\cup\M^0\setminus\fP$.}
    \end{equation*}}
\end{proof}
\answ{A key element in the construction of the recovery sequence in Proposition \ref{recov} is the optimal profile on a ray which is discussed in \cite{abl}. 
The following lemma is a restatement of relevant parts of Lemma 3.4 from \cite{abl} in the notation of this paper as well as some properties which are useful for our estimates.
\begin{lemma}\label{lemma:optimal}
    Let $n^*:(0,\infty)\times (0,\pi)\to\S^2$ be defined by $n^*=(\sqrt{1-(n_3^*)^2},0,n_3^*)$ where
    \begin{equation*}
        n_3^*(t,\varphi)=\frac{A(\varphi)-e^{-\ftf t}}{A(\varphi)+e^{-\ftf t}}\quad\text{and}\quad A(\varphi)=\frac{1+\cos\varphi}{1-\cos\varphi}
        \, ,
    \end{equation*}
    then $n^*(0,\varphi)=(\sin\varphi,0,\cos\varphi)$ and $\lim_{t\to\infty}n^*(t,\varphi)=e_3$. Moreover,
    \begin{equation*}
        \int_0^\infty\norm{\frac{\partial n^*}{\partial t}}^2+g(n^*)\, dt=\ftf(1-\cos\varphi)\quad\text{and}\quad\norm{\frac{\partial n^*}{\partial t}}^2,\norm{\frac{\partial n^*}{\partial \varphi}}^2,\norm{n_1^*}\leq Ce^{-\ftf t}
        \, .
    \end{equation*}
\end{lemma}}
\answ{With these lemmas in hand, we are now ready to construct a sequence which nearly attains the optimal upper bound using a fixed boundary condition. 
This is achieved in the following Proposition~\ref{recov} in which we construct a competitor on the exterior region around $\M^+\cup\M^0$ with the intention of using it for the boundary condition $v^\d$ for fixed $\d>0$ as constructed in Lemma \ref{bdry-data}. 
}

\answ{
\begin{proposition}\label{recov}
    Let $v$ be a unit tangent vector field on $\M^+\cup \M^0$ which is Lipschitz away from finitely many isolated point discontinuities $\{p_j\}_{j=1}^N\subset \M^+$ and is such that for any $\omega\in \M^+$ in a sufficiently small neighbourhood around every $p_j$
    \begin{equation}\label{assumpt-bound}
        |\grad_\omega v(\omega)|\leq\frac{C}{\dist_\M(\omega,p_j)}
    \end{equation}
    for some constant $C>0$. Moreover, suppose $v=e_3$ on $\M^0$ and that
    \begin{equation*}
        \frac{\eta}{\xi}\to\infty
        \quad\text{as}\quad\xi,\eta\to0
        \, ,
    \end{equation*}
    then, for any $H\gg 1$ large enough, there exists a sequence of maps $\Qc\in \A$ with $\Qc=v\otimes v-\frac{1}{3}I$ on $\M^+\cup \M^0$ and $\Qc=Q_\infty$ on $\C_{2H\eta}(\M^0)\cup\partial \C_{2H\eta}(\M^+)\setminus\M^+$ such that
    \begin{equation*}
        \limsup_{\xi,\eta\to0}\:\eta\exe(Q_{\xi,\eta}^H;\C_{2H\eta}(\M^+\cup\M^0))
        \leq
        \int_{\M^+}\ftf(1-v_3)\, d\H^2 + \Psi(H)
        \, ,
    \end{equation*}
    where $\Psi(h)\to 0$ as $h\to\infty$.
\end{proposition}}

\answ{We remark that in the statement of the proposition, the upper bound has $v_3$ in the integrand as opposed to $|v_3|$ which is inconsistent with the statement of Theorem~\ref{thm:upper} and is not optimal at locations where $v_3<0$. 
However, removing the absolute value allows for a simpler construction and in the proof of Theorem \ref{thm:upper}, when we consider the boundary data $v^\d$, we can use the $\H^2-$pointwise almost everywhere convergence of $v_3^\d\to\sqrt{1-\nu_3^2}$ to obtain the same result as with absolute values. 
Another important remark is that the integral is taken over $\M^+$ while the construction is over $\C_{2H\eta}(\M^+\cup \M^0)$. 
This follows from the condition that $\Qc=Q_\infty$ on $C_{2H\eta}(\M^0)$, so the energy in this region is zero.}

\begin{proof}
     \answ{First, let $\Qc=Q_\infty$ on $\C_{2H\eta}(\M^0)$, then we will divide} the exterior region $\C_{2H\eta}(\M^+)$ \blue{by defining five subregions}. 
     We introduce
    \begin{equation*}
        \P=\answ{\bigcup_{j=1}^{N}\{p_j\}}
    \end{equation*}
    and we denote by $\P_\eta$ the $\eta-$neighbourhood of $\P$. Note that since the $p_j$ are isolated, for sufficiently small $\eta$, $\P_\eta$ is a disjoint collection of open balls $B(p_j,\eta)\subset \M^+$.

    We split $\C_{2H\eta}(\M^+)$ into several subregions $\Omega_i$ for $i=1,...,5$, see also Figure \ref{fig:construction},
    \blue{and} we carry out the constructions and energy estimates of $\Qc$ on each of those subregion individually. 
    We first define a unit vector field $n$ on each subregion satisfying the boundary condition (i.e.\ $n(0,\omega)=v(\omega)$ on \blue{$\M^+\cup \M^0$}) and then pose $\Qc=n\otimes n-\frac{1}{3}I$. 
    The first subregion we consider is the set where we expect the energy to concentrate, i.e.\ a boundary layer away from potential defects. 
    \begin{figure}
        \centering
        \begin{tikzpicture}[scale=1.8]
            \draw [ultra thick] (-90:5) arc (-90:-45:5);
            \draw [ultra thick] (-90:5) arc (-90:-135:5);
            \draw (-130:3.5) arc (-130:-50:3.5);
            
            \draw [fill] (-90:5) circle (2pt);

            \draw (-82:5) -- (-82:3.5);
            \draw (-74:5) -- (-74:3.5);
            \draw (-98:5) -- (-98:3.5);
            \draw (-106:5) -- (-106:3.5);

            \draw (-98:4.4) arc (-98:-82:4.4);
            \draw (-106:4.25) arc (-106:-132.5:4.25);
            \draw (-74:4.25) arc (-74:-47.5:4.25);

            \draw (-120:4.625) node {$\Om_1$};
            \draw (-60:4.625) node {$\Om_1$};
            \draw (-120:3.875) node {$\Om_2$};
            \draw (-60:3.875) node {$\Om_2$};
            \draw (-90:4.7) node {$\Om_3$};
            \draw (-90:4) node {$\Om_4$};
            \draw (-78:4) node {$\Om_5$};
            \draw (-102:4) node {$\Om_5$};
            
            \draw (-86:5.3) node {$\eta$};
            \draw (-78:5.3) node {$\eta$};
            \draw (-102.5:5.3) node {$\d'$};
            \draw (-107.5:5.65) node {$\d$};
            \draw (-79.75:4.7) node {$\eta$};
            \draw (-70.5:4.625) node {$H\eta$};
            \draw (-70.5:3.875) node {$H\eta$};
            
            \draw [<->] (-90:5.15) arc (-90:-82:5.15);
            \draw [<->] (-82:5.15) arc (-82:-74:5.15);
            \draw [<->] (-90:5.15) arc (-90:-115:5.15);
            \draw [<->] (-90:5.5) arc (-90:-125:5.5);

            \draw [<->] (-73:5) -- (-73:4.25);
            \draw [<->] (-73:4.25) -- (-73:3.5);
            \draw [<->] (-81:5) -- (-81:4.4);

            \draw (-49:5.2) node {$\M$};
        \end{tikzpicture}
        \caption{An example of a cross-section of the subregions of $C_{2H\eta}(\M^+)$ around a point defect \answ{(bold dot)} of degree +1 where $\M$ has negative mean curvature nearby the defect.}
        \label{fig:construction}
    \end{figure}
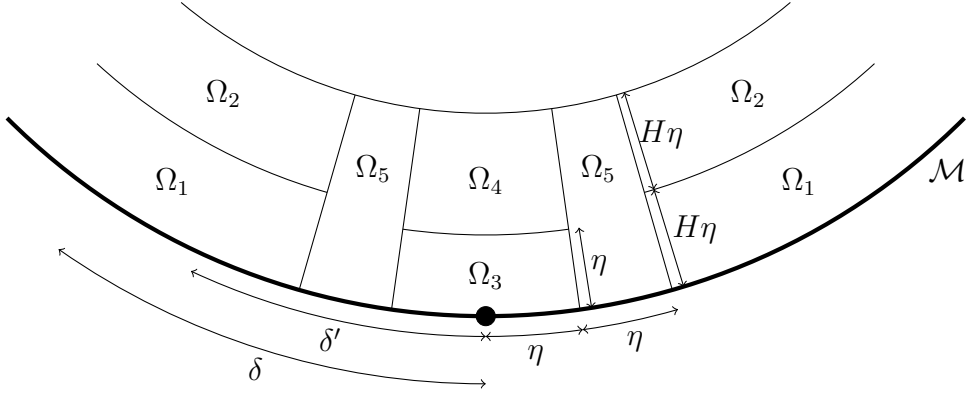
    
    In this layer we would like to take the director $n$ to be the optimal profile from \cite{abl} \answ{(see Lemma~\ref{lemma:optimal})}, but since {that} transition takes place over an infinite length, the optimal profile needs to be modified similar to \cite{ACS2024} to fit into a finite length while preserving the energy bound.
    Therefore, we introduce two layers:
    \begin{equation*}
        \Omega_1
        \ := \
        \C_{H\eta}(\M^+\setminus \P_{2\eta})
        \, ,
    \end{equation*}
    in which we take the optimal profile, and a corresponding outer region
    \begin{equation*}
        \Omega_2
        \ := \ 
        \C_{2H\eta}(\M^+\setminus\P_{2\eta})\setminus\Omega_1
        \, ,
    \end{equation*}
    in which we interpolate to make the transition to $Q_\infty$. 
    This second transition layer will have asymptotically negligible energetic cost. 
    Next, we have the subregion where point defects occur, given by
    \begin{equation*}
        \Om_3:=\C_\eta\left(\P_\eta\right)
        \, .
    \end{equation*}
    In each of the connected components of $\Om_3$ we define $n$ to be a map representing one half of a point defect of degree $+1$ or $-1$ centered at each \answ{$p_j\in\P$}. 
    Next we consider the region
    \begin{equation*}
        \Om_4:=\C_{2H\eta}(\P_\eta)\setminus\Om_3
        \, ,
    \end{equation*}
    where we interpolate radially to $Q_\infty$, similar to $\Om_2$. Finally we consider,
    \begin{equation*}
        \Om_5:=\C_{2H\eta}\left(\P_{2\eta}\right)\setminus \Om_3
        \, ,
    \end{equation*}
    where we define $n$ via a Lipschitz extension from the boundary of the region. 
    Note that in the spherical case presented in \cite{bls}, the analogues of $\Om_4$ and $\Om_5$ are combined into one region, however due to the more complicated geometry and defects which arise in this general setting, the splitting of these regions greatly simplifies the energy estimates. 
    We will see that the energy in all regions, excluding $\Om_1$, is negligible in the limit.

    \bigskip

    \noindent\underline{\textit{Energy in $\Omega_1$}}:
    Recall that $\Om_1=\C_{H\eta}(\M^+\setminus\P_{2\eta})$.
    On this set, we define $n$ by
    \begin{equation*}
        n(r,\omega)=\left(\frac{v_1(\omega)\sqrt{1-(n_3^*(\frac{r}{\eta},\varphi(\omega)))^2}}{\sqrt{1-(v_3(\omega))^2}},\frac{v_2(\omega)\sqrt{1-(n_3^*(\frac{r}{\eta},\varphi(\omega)))^2}}{\sqrt{1-(v_3(\omega))^2}},n_3^*\Big(\frac{r}{\eta},\varphi(\omega)\Big)\right)
        \, ,
    \end{equation*}
    where $n_3^*$ comes from the optimal profile along a ray defined in \answ{Lemma~\ref{lemma:optimal} (coming from} \cite{abl}) and $\varphi(\omega)=\cos^{-1}(v_3(\omega))$.
    We note that $n_3^*(0,\varphi(\omega))=v_3(\omega)$, so $n(0,\omega)=v(\omega)$. The energy in this region is given by
    \begin{equation*}
        \eta \exe(\Qc;\Om_1)=\int_{\M^+\setminus\P_{2\eta}}\int_0^{H\eta}\left(\eta|\grad n|^2+\frac{1}{\eta}g(n)\right)\prod_{i=1}^2(1+r\k_i)\, drd \H^2
        \, .
    \end{equation*}
    The gradient $|\grad n|^2$ can be written as
    \begin{equation*}
        |\grad n|^2=\norm{\frac{\partial n}{\partial r}}^2+\frac{1}{(1+r|\k_1|)^2}\norm{\frac{\partial n}{\partial \omega_1}}^2+\frac{1}{(1+r|\k_2|)^2}\norm{\frac{\partial n}{\partial \omega_2}}^2\leq \norm{\frac{\partial n}{\partial r}}^2+|\grad_\omega n|^2
        \, ,
    \end{equation*}
    so it suffices to estimate the radial and tangential derivatives of $n$ separately. First,
    \begin{equation}\label{bound:om1:rad}
        \norm{\frac{\partial n}{\partial r}}^2=\norm{\frac{\partial}{\partial r}n^*\Big(\frac{r}{\eta},\omega\Big)}^2=\frac{1}{\eta^2}\norm{\frac{\partial n^*}{\partial \rt}}^2
    \end{equation}
    for $r=\eta \rt$.
    Next we consider the tangential derivative
    \begin{align*}
        \norm{\frac{\partial n}{\partial \omega_i}}^2&=\left(\norm{\frac{\partial v_1}{\partial \omega_i}}^2+\norm{\frac{\partial v_2}{\partial \omega_i}}^2\right)\left(\frac{1-(n_3^*)^2}{1-(v_3)^2}\right)+\norm{\frac{\partial n}{\partial \varphi}}^2\norm{\frac{\partial \varphi}{\partial \omega_i}}^2-\frac{(v_3)^2(1-(n_3^*)^2}{(1-(v_3)^2)^2}\norm{\frac{\partial v_3}{\partial \omega_i}}^2\\
        &\leq\norm{\frac{\partial v}{\partial \omega_i}}^2+\frac{C}{1-(v_3)^2}\norm{\frac{\partial v_3}{\partial \omega_i}}^2\leq C\norm{\frac{\partial v}{\partial \omega_i}}^2
        \, ,
    \end{align*}
    where the last inequality is obtained from \eqref{bound:n3}. Therefore, $|\grad_\omega n|^2\leq C|\grad_\omega v|^2$, so it suffices to estimate the tangential derivatives of $v$. To do this, we consider two different subregions of $\M^+\setminus \P_{2\eta}$. \answ{Recall that by assumption \eqref{assumpt-bound} around each $p_j$ there exists a neighbourhood on which
    \begin{equation}\label{bound-defect}
        |\grad_\omega v(\omega)|\leq\frac{C}{\dist_\M(\omega,p_j)}
    \end{equation}
    and since there are finitely many $p_j$, there exists $\e>0$ sufficiently small such that \eqref{bound-defect} holds on $B(p_j,\e)$ and such that $B(p_j,\e)\subset \M^+$.
    By assumption, if $\omega\in \M^+\setminus \P_\e$, then $v$ is Lipschitz, so $v\in W^{1,\infty}(\M^+\setminus \P_\e)$, therefore there exists a constant $C>0$ such that $|\grad_\omega v(\omega)|^2\leq C$. If $\omega\in \P_\e\setminus\P_{2\eta}$, then \eqref{bound-defect} holds.}
    We can now estimate the energy in this region.
    \begin{align*}
        \eta \exe(\Qc;\Om_1)&\leq\int_{\M^+\setminus\P_{2\eta}}\int_0^{H\eta}\left(\eta\norm{\frac{\partial n}{\partial r}}^2+\eta|\grad_\omega n|^2+\frac{1}{\eta}g(n)\right)\prod_{i=1}^2(1+r\k_i)\, drd \H ^2(\omega)\\
        &=\int_{\M^+\setminus\P_{2\eta}}\int_0^{H}\left(\norm{\frac{\partial n^*}{\partial \rt}}^2+\eta^2|\grad_\omega n|^2+g(n^*)\right)\prod_{i=1}^2(1+\eta\rt\k_i)\, d\rt d \H ^2(\omega)
        \, .
    \end{align*}
    We then separate the integral into two parts,
    \begin{align*}
        \eta \exe(\Qc;\Om_1)&\leq \int_{\M^+\setminus\P_{2\eta}}\int_0^{H}\left(\norm{\frac{\partial n^*}{\partial \rt}}^2+g(n^*)\right)\prod_{i=1}^2(1+\eta\rt\k_i)\, d\rt d \H ^2(\omega)\\
        &\quad\quad+C\int_{\M^+\setminus\P_{2\eta}}\int_0^{H} \eta^2|\grad_\omega n|^2\, d\rt d\H^2(\omega)
        \, ,
    \end{align*}
    where the first part will tend to the limiting energy as $\eta\to0$ and the second part will be negligible in the limit. By Lebesgue's Dominated Convergence Theorem, the definition of $n^*$, and since $1+\eta\rt\k_i\to 1$ pointwise as $\eta\to0$, it holds
    \begin{align*}
        \limsup_{\xi,\eta\to0}\: \int_{\M^+\setminus\P_{2\eta}}\int_0^{H}&\left(\norm{\frac{\partial n^*}{\partial \rt}}^2+g(n^*)\right)\prod_{i=1}^2(1+\eta\rt\k_i)\, d\rt d \H ^2(\omega)\\
        &= \int_{\M^+}\int_0^H\left(\norm{\frac{\partial n^*}{\partial \rt}}^2+g(n^*)\right)\, d\rt d\H^2(\omega)\leq\int_{\M^+} \ftf(1-\blue{v_3})\, d\H^2(\omega)
        \, .
    \end{align*}
    Now for the second part, using \eqref{bound-defect},
    \begin{align*}
        \int_{\M^+\setminus\P_{2\eta}}\int_0^{H} \eta^2|\grad_\omega n|^2\, d\rt d\H^2(\omega)\leq& \int_{\answ{\M^+\setminus \P_\e}}\int_0^H C\eta^2\ drd\H^2(\omega)\\
        &+\answ{\sum_{j=1}^{N}}\int_{\answ{B(p_j,\e)\setminus B(p_j,2\eta)}}\int_0^H\frac{C\eta^2}{\dist_\M^2(\omega,p_j)}\, drd\H^2(\omega)
        \, .
    \end{align*}
    We can further estimate this energy by
    \begin{align*}
        \int_{\M^+\setminus\P_{2\eta}}\int_0^{H} \eta^2|\grad_\omega n|^2\, d\rt d\H^2(\omega)&\leq CH\eta^2|\M^+|+\sum_{j=1}^{N}CH\eta^2\ln\Big(\frac{\answ{\e}}{2\eta}\Big)\\
        &\leq CH\eta^2 |\M^+|+\sum_{j=1}^{N}CH\eta^2\Big|\ln\frac{2\eta}{\answ{\e}}\Big|
        \, .
    \end{align*}
    Therefore,
    \begin{equation*}
        \limsup_{\xi,\eta\to0}\: \int_{\M^+\setminus\P_{2\eta}}\int_0^{H} \eta^2|\grad_\omega n|^2\, d\rt d\H^2(\omega)=0
        \, ,
    \end{equation*}
    and
    \begin{align*}
        \limsup_{\xi,\eta\to0}\: \eta\exe(\Qc;\Om_1)\leq \int_{\M^+} \ftf(1-\blue{v_3})\, d\H^2(\omega)
        \, .
    \end{align*}

    \bigskip
    
    \noindent\underline{\textit{Energy in $\Om_2$}}: Next we consider the outer layer $\Om_2=\C_{2H\eta}(\M^+\setminus\P_{2\eta})\setminus\Om_1$, where we define $n$ by interpolating its angle with $e_3$ to 0 as $r$ goes from $H\eta$ to $2H\eta$. 
    Let $\Phi$ be the interpolation angle
    \begin{equation*}
        \Phi(r,\omega)=\Big(2-\frac{r}{H\eta}\Big)\Phi_0(\omega)
        \, 
    \end{equation*}
    for $\Phi_0(\omega)=\cos^{-1}(n_3^*(H,\varphi(\omega)))$.
    Then the director $n$ is defined by
    \begin{equation*}
        n(r,\omega)=\left(\frac{v_1(\omega)\sin\Phi(r,\omega)}{\sqrt{1-(v_3(\omega))^2}},\frac{v_2(\omega)\sin\Phi(r,\omega)}{\sqrt{1-(v_3(\omega))^2}},\cos\Phi(r,\omega)\right)
        \, .
    \end{equation*} 
     By choosing $\Phi$ and $\Phi_0$ in this way, $n(H\eta,\omega)$ agrees with the definition of $n$ on $\Om_1$ and $n(2H\eta,\omega)=e_3$, so it is a continuous extension from $\Om_1$ to the exterior of $\C_{2H\eta}(\M)$. The energy in this region is given by,
    \begin{equation*}
        \eta E_\xi(\Qc;\Om_2)=\int_{\M^+\setminus\P_{2\eta}}\int_{H\eta}^{2H\eta} \left(\eta|\grad n|^2+\frac{1}{\eta}g(n)\right)\prod_{i=1}^2(1+r\k_i)\, drd\H^2(\omega)
        \, .
    \end{equation*}
    We again obtain estimates on $|\grad n|^2$ by considering the radial and tangential derivatives separately.
    For the radial derivative we simply estimate
    \begin{align}\label{bound:om2:rad}
        \left|\frac{\partial n}{\partial r}\right|^2=\left|\frac{\partial \Phi}{\partial r}\right|^2=\frac{1}{H^2\eta^2}|\Phi_0|^2\leq\frac{C}{H^2\eta^2}
        \, .
    \end{align}
    Computing the tangential derivatives we see that
    \begin{equation*}
    \norm{\frac{\partial n}{\partial \omega_i}}^2=\frac{\sin^2\Phi}{1-(v_3)^2}\left(\norm{\frac{\partial v_1}{\partial \omega_i}}^2+\norm{\frac{\partial v_2}{\partial \omega_i}}^2\right)+\norm{\frac{\partial \Phi}{\partial \omega_i}}^2-\frac{(v_3)^2\sin^2\Phi}{(1-(v_3)^2)^2}\norm{\frac{\partial v_3}{\partial \omega_i}}^2
    \, .
    \end{equation*}
    Therefore
    \begin{align*}
        \norm{\frac{\partial n}{\partial \omega_i}}^2\leq \norm{\frac{\partial v}{\partial \omega_i}}^2+\norm{\frac{\partial \Phi_0}{\partial \omega_i}}^2&=\norm{\frac{\partial v}{\partial \omega_i}}^2+\frac{1}{1-(n_3(H\eta,\omega))^2}\norm{\frac{\partial}{\partial \omega_i}n_3(H\eta,\omega)}^2\\
        &\leq \norm{\frac{\partial v}{\partial \omega_i}}^2+\norm{\frac{\partial}{\partial \omega_i}n(H\eta,\omega)}^2
        \, ,
    \end{align*}
    where the last inequality comes from \eqref{bound:n3}.
    Now using the tangential derivative bound from $\Om_1$, we obtain the estimate on $\Om_2$
    \begin{equation*}
        |\grad_\omega n|^2\leq |\grad_\omega v|^2+|\grad_\omega n(H\eta,\omega)|^2\leq C|\grad_\omega v|^2
        \, .
    \end{equation*}
    Finally, using that $\Phi$ is monotone decreasing, $g(n(r,\omega))\leq g(n(H\eta,\omega))$ on $\Om_2$ and from the estimate on $|n_1^*|^2$ in Lemma \ref{lemma:optimal}, we can see that
    \begin{equation*}
        g(n(r,\omega))=\sqrt{\frac{3}{2}}|n_1(r,\omega)|^2\leq \sqrt{\frac{3}{2}}|n_1^*(H,\varphi(\omega))|^2\leq Ce^{-\ftf H}
        \, .
    \end{equation*}
    So the energy in this region can be estimated by
    \begin{align*}
        \eta \exe(\Qc;\Om_2)&\leq \int_{\M^+\setminus\P_{2\eta}}\int_{H\eta}^{2H\eta} C\left(\frac{1}{H^2\eta}+\eta|\grad_\omega v|^2+\frac{1}{\eta}e^{-\ftf H}\right)\, drd\H^2(\omega)\\
        &\leq\int_{\M^+\setminus\P_{2\eta}}C\left(\frac{1}{H}+H\eta^2|\grad_\omega v|^2+He^{-\ftf H}\right)\ d\H^2(\omega)
        \, .
    \end{align*}
    As in $\Om_1$, the tangential derivative term will disappear as $\eta\to0$, so 
    \begin{equation*}
        \limsup_{\xi,\eta\to0}\: \eta\exe(\Qc;\Om_2)\leq C|\M^+|\left(\frac{1}{H}+He^{-\ftf H}\right)=:\Psi(H)
        \, ,
    \end{equation*}
    and if we subsequently take $H\to\infty$, then $\Psi(H)\to 0$. 

    \bigskip
   
    \noindent\underline{\textit{Energy in $\Om_3$}}: Recall that $\Om_3$ is the region around point defects of \blue{$v$},
    \begin{equation*}
        \Om_3:=\C_\eta\left(\P_\eta\right)
        \, .
    \end{equation*}
    We estimate the energy around an arbitrary point defect, $p=p_j$ for some $j\in\{1,\dots, N\}$ and we drop the index $j$ on all related quantities to improve readability.
    We consider the region $\C_\eta(B(p,\eta))$.
    First define $u:B(p,\eta)\to\R^3$ by
    \begin{equation}\label{def:u}
        u(\omega)=\dist_\M(\omega,p)v(\omega)
        \, ,
    \end{equation}
    Then we define the vector field $n:(0,\eta)\times B(p,\eta)\to \S^2$ by
    \begin{equation*}
        n(r,\omega)=\frac{u(\omega)+r\nu(\omega)}{|u(\omega)+r\nu(\omega)|}
        \, ,
    \end{equation*}
    \answ{and we remark that $n$ is well-defined away from the point defect at $(0,p)$ since $u\perp \nu$.} \blue{The} energy \blue{is given by}
    \begin{equation*}
        \eta \exe(\Qc;\C_\eta(B(p,\eta)))=\int_{B(p,\eta)}\int_0^\eta\left(\eta|\grad n|^2+\frac{1}{\eta}g(n)\right)\prod_{i=1}^2(1+r\k_i)\, drd\H^2(\omega)
        \, .
    \end{equation*}
    We can compute the radial derivative of $n$ to be
    \begin{equation*}
        \frac{\partial n}{\partial r}=\frac{\nu|u+ r\nu|^2-(u+r\nu)[(u+ r\nu)\cdot \nu)]}{|u+ r\nu|^3}
        \, .
    \end{equation*}
    Taking the norm yields
    \begin{equation*}
        \left|\frac{\partial n}{\partial r}\right|\leq\frac{|u+ r\nu|^2+|u+ r\nu||(u+ r\nu)\cdot \nu|}{|u+ r\nu|^3}\leq \frac{2}{|u+r\nu|}
        \, .
    \end{equation*}
    Using that $u\perp \nu$, we write $|u+ r\nu|=\sqrt{|u|^2+|r\nu|^2}=\sqrt{|u|^2+r^2}$, so
    \begin{equation}\label{bound:om3:rad}
        \left|\frac{\partial n}{\partial r}\right|^2
        \leq
        \frac{C}{|u|^2+r^2}
        =
        \frac{1}{\eta^2}\left(\frac{C}{\blue{\eta^{-2}\dist^2_\M(\omega,p)}+h^2}\right)
        \, ,
    \end{equation}
    using $r=\eta h$. Next we consider the tangential derivatives of $n$. 
    For $i=1,2$, it holds
    \begin{equation*}
        \frac{\partial n}{\partial \omega_i}=\frac{(\frac{\partial u}{\partial \omega_i}+ r\frac{\partial \nu}{\partial \omega_i})|u+ r\nu|^2-(u+ r\nu)[(u+ r\nu)\cdot(\frac{\partial u}{\partial \omega_i}+ r\frac{\partial \nu}{\partial \omega_i})]}{|u+ r\nu|^3}
        \, 
    \end{equation*}
    so we can estimate the norm by
    \begin{equation}\label{om3:point:tangent}
        \left|\frac{\partial n}{\partial \omega_i}\right|
        \leq
        \frac{C|\frac{\partial u}{\partial \omega_i}+ r\frac{\partial \nu}{\partial \omega_i}|}{|u+ r\nu|}
        \leq
        \frac{C}{\eta}\left(\frac{|\frac{\partial u}{\partial \omega_i}|+|\frac{\partial \nu}{\partial \omega_i}|}{\sqrt{\blue{\eta^{-2}\dist_\M^2(\omega,p)}+h^2}}\right)
        \, .
    \end{equation}
    Using $|\grad_\omega \nu|\leq \sqrt{2}\k$ from \eqref{bound:curv}, it suffices to \answ{obtain a} bound \answ{on} $|\frac{\partial u}{\partial \omega_\blue{i}}|$.
    \begin{equation*}
        \frac{\partial u}{\partial \omega_\blue{i}}=\left(\frac{\partial}{\partial \omega_\blue{i}}\dist_\M(\omega,p)\right)\blue{v}(\omega)+\dist_\M(\omega,p)\left(\frac{\partial \blue{v}}{\partial \omega_\blue{i}}\right)
        \, .
    \end{equation*}
    Therefore, using \eqref{assumpt-bound}
    \begin{align*}
        \left|\frac{\partial u}{\partial \omega_i}\right|\leq \left|\frac{\partial}{\partial \omega_i}\dist_\M(\omega,p)\right|+\answ{\dist_\M(\omega,p)}\left|\frac{\partial \blue{v}}{\partial \omega_\blue{i}}\right|\leq C+\answ{\dist_\M(\omega,p)\left(\frac{C}{\dist_M(\omega,p)}\right)\leq C}
        \, .
    \end{align*}
    Putting this together with \eqref{om3:point:tangent}, we can see that
    \begin{equation}\label{bound:om3:tan}
        |\grad_\omega n|^2\leq \frac{1}{\eta^2}\left(\frac{C}{\blue{\eta^{-2}\dist^2_\M(\omega,p)}+h^2}\right)
        \, .
    \end{equation}
    Finally, we note that $g(n)\leq C$ for some $C>0$, thus the energy can be estimated by
    \begin{equation*}
        \eta \exe(\Qc; \C_\eta(B(p,\eta)))\leq\blue{\frac{C}{\eta}\int_{\answ{B(p,\eta)}}\int_0^1\left(\frac{C}{\blue{\eta^{-2}\dist^2_\M(\omega,p)}+h^2}+1\right)\ dh\, d\H^2(\omega)
        \, .}
    \end{equation*}
    \answ{Recall that the exponential map is bi-Lipschitz \cite{biLip}, so its Jacobian $|D\exp_p|$ is bounded on $B(0,\eta)\subset T_p\M$, hence $|D\exp_p(\eta\tau)|\leq C\eta^2$ for all $\tau\in B(0,1)\subset T_p\M$.} \blue{Therefore by the change of variables $\omega=\exp_p(\eta\tau)$, we have}
    \begin{equation*}
        \eta \exe(\Qc; \C_\eta(B(p,\eta)))\leq \blue{C\eta\int_{B(0,1)}\int_0^1\left(\frac{1}{|\tau|^2+h^2}+1\right)\ dh\, d\tau}
        \, .
    \end{equation*}
    Thus we integrate to obtain the estimate
    \begin{equation*}
        \eta \exe(\Qc; \C_\eta(B(p,\blue{\eta})))\leq \blue{C\eta}
        \, ,
    \end{equation*}
    and
    \begin{equation*}
        \limsup_{\xi,\eta\to0}\: \eta \exe(\Qc; \C_\eta(B(p,\eta)))=0
        \, .
    \end{equation*}
    This holds for each point $p=p_j$, therefore,
    \begin{equation*}
        \limsup_{\xi,\eta\to0}\eta \exe(\Qc;\Om_3)=0
        \, .
    \end{equation*}
    
    \bigskip
    
    \noindent\underline{\textit{Energy in $\Om_4$}}: 
    In this region, we do an interpolation of the angle between $n$ and $e_3$, similar to $\Om_2$, to continuously extend $n$ from $\Om_3$ to the exterior region. 
    First, choose any $p_j$ as we did in $\Om_3$, then define
\begin{equation*}
    \Om_4^{j}=\C_{2H\eta}(B(p_j,\eta))\setminus\C_\eta(B(p_j,\eta))
\end{equation*}
so that $\Om_4=\bigcup_{j=1}^{N}\Om_4^{j}$. Again we will let $p=p_j$ and drop the index $j$ wherever needed to improve readability. Now \answ{recalling $u$ from \eqref{def:u}, we }define $\vb:B(p,\eta)\to\S^2$ by
\begin{equation*}
    \vb(\omega)=\frac{u(\omega)+\eta\nu(\omega)}{|u(\omega)+\eta\nu(\omega)|}
    \, ,
\end{equation*}
and the interpolation $\Phi:(\eta,2H\eta)\times B(p,\eta)\to[0,\pi]$ by,
\begin{equation*}
    \Phi(r,\omega)=\left(1-\frac{r-\eta}{2H\eta-\eta}\right)\Phi_0(\omega)
    \, ,
\end{equation*}
where $\Phi_0(\omega)=\cos^{-1}(\vb_3(\omega))$. Finally we let
\begin{equation*}
    n(r,\omega)=\left(\frac{\vb_1(\omega)\sin\Phi(r,\omega)}{\sqrt{1-(\vb_3(\omega))^2}},\frac{\vb_2(\omega)\sin\Phi(r,\omega)}{\sqrt{1-(\vb_3(\omega))^2}},\cos\Phi(r,\omega)\right)
    \, ,
\end{equation*}
so that $n(\eta,\omega)=\vb(\omega)$ and $n(2H\eta,\omega)=e_3$. The energy in $\Om_4^{j}$ can be estimated by
\begin{equation*}
    \eta \exe(\Qc;\Om_4^{j})\leq\int_{B(p,\eta)}\int_\eta^{2H\eta}\left(\eta\norm{\frac{\partial n}{\partial r}}^2+\eta|\grad_\omega n|^2+\frac{1}{\eta}g(n)\right)\prod_{\blue{i}=1}^2(1+r\k_\blue{i})\, dr d\H^2(\omega)
    \, .
\end{equation*}
It is easy to see that,
\begin{equation}\label{bound:om4:rad}
    \norm{\frac{\partial n}{\partial r}}^2=\norm{\frac{\partial \Phi}{\partial r}}^2=\frac{1}{(2H\eta-\eta)^2}|\Phi_0|^2\leq\frac{C}{(2H-1)^2\eta^2}\leq\frac{C}{\eta^2}
    \, ,
\end{equation}
since $\Phi_0$ is bounded and $H>0$ is large. By a straightforward calculation, we also see that for $i=1,2$,
\begin{equation*}
    \norm{\frac{\partial n}{\partial \omega_i}}^2
    =
    \frac{\sin^2\Phi}{1-\vb_3^2}\left(
    \norm{\frac{\partial \vb_1}{\partial \omega_i}}^2+\norm{\frac{\partial \vb_\answ{2}}{\partial \omega_i}}^2
    \right)
    +
    \norm{\frac{\partial \Phi}{\partial \omega_i}}^2
    -
    \frac{\vb_3^2\sin^2\Phi}{(1-\vb_3^2)^2}\norm{\frac{\partial \vb_3}{\partial \omega_i}}^2
    \, ,
\end{equation*}
but using that $\sin\Phi\leq\sqrt{1-\vb_3^2}$, we can further estimate the gradient by
\begin{equation}\label{om4:tangent}
    |\grad_\omega n|^2\leq |\grad_\omega \vb|^2+|\grad_\omega \Phi|^2
    \, .
\end{equation}
Next we consider the tangential derivatives of $\Phi$ and we see that they are bounded by the derivatives of $\Phi_0$ as follows,
\begin{equation*}
    \norm{\frac{\partial \Phi}{\partial \omega_i}}^2=\left(1-\frac{r-\eta}{2H\eta-\eta}\right)^2\norm{\frac{\partial \Phi_0}{\partial \omega_i}}^2\leq\norm{\frac{\partial \Phi_0}{\partial \omega_i}}^2
    \, ,
\end{equation*}
for $i=1,2$. Using the definition of $\Phi_0$,
\begin{equation*}
    \norm{\frac{\partial \Phi_0}{\partial \omega_i}}^2=\frac{1}{1-\vb_3^2}\norm{\frac{\partial \vb_3}{\partial \omega_i}}^2\leq\norm{\frac{\partial \vb}{\partial \omega_i}}^2
    \, ,
\end{equation*}
where the last inequality comes from \eqref{bound:n3}.
It then follows that $|\grad_\omega\Phi|^2\leq|\grad_\omega \vb|^2$, so that combining with \eqref{om4:tangent} and \eqref{bound:om3:tan} we obtain the estimate
\begin{equation}\label{bound:om4:tan}
    |\grad_\omega n|^2\leq 2|\grad_\omega \vb|^2\leq\frac{1}{\eta^2}\left(\frac{C}{\blue{\eta^{-2}\dist^2_\M(\omega,p)}+1}\right)\leq\frac{C}{\eta^2}
    \, .
\end{equation}
With these estimates in hand, we bound the energy in this region by
\begin{equation*}
    \eta \exe(\Qc;\Om_4^{j})\leq\int_{B(p,\eta)}\int_\eta^{2H\eta}\frac{C}{\eta}\ drd\H^2(\omega) =C(2H-1)\blue{\H^2(}B(p,\eta)\blue{)}
    \, .
\end{equation*}
Therefore, taking $\xi,\eta\to0$ the energy in $\Om_4^{j}$ vanishes, so if we consider the energy in all of $\Om_4$ we see that
\begin{equation*}
    \limsup_{\xi,\eta\to0}\: \eta \exe(\Qc;\Om_4)=0
    \, .
\end{equation*}

\bigskip

\noindent\underline{\textit{Energy in $\Om_5$}}: The last region we need to consider is
\begin{equation*}
    \Om_5=\C_{2H\eta}(\P_{2\eta})\setminus\left(\Om_3\cup\Om_4\right)
    \, .
\end{equation*}
As before, it is enough to carry out the construction separately for each of the components of this region. Hence it suffices to consider
\begin{equation*}
    \Om_5^{j}=\C_{2H\eta}(A^{j}_\eta)
    \, ,
\end{equation*}
where $A^{j}_\eta$ is the annular region given by $A^{j}_\eta=B(p_j,2\eta)\setminus B(p_j,\eta)$ for some $j\in\{1,\dots,N\}$.
On $\Om_5^{j}$, we want to do a Lipschitz extension of $\Qc$ based on a Lipschitz extension of the angle $\ph$ between $n$ and $e_3$. To define $\ph$ on $\partial \Om_5^{j}$, we note that there exists a continuous extension of $n$ from each of the neighbouring regions $\Om_j$ for $j=1,\dots,4$ on $\partial \Om_5^{j}$. We also take $n=v$ on the surface of the manifold $\answ{\M^+}$. Then, define $\ph:\partial \Om_5^j\to[0,\pi]$ by
\begin{equation*}
    \ph(r,\omega)=\cos^{-1}(n_3(r,\omega))
    \, .
\end{equation*}
Note that by taking the gradient of $\ph$ and then applying the inequality from \eqref{bound:n3},
\begin{equation*}
    |\grad \ph|^2=\frac{1}{1-n_3^2}|\grad n_3|^2\leq C|\grad n|^2\leq C\norm{\frac{\partial n}{\partial r}}^2+C|\grad_\omega n|^2
    \, ,
\end{equation*}
for some $C>0$, so the Lipschitz constant of $\ph$ will be proportional to the maximum of the sum of radial and tangential derivatives of $n$ on $\partial \Om_5^j$. On $\partial \Om_1$, \blue{recalling that $\P=\bigcup_{j=1}^N\{p_j\}$} we can see that
\begin{equation*}
    \norm{\frac{\partial n}{\partial r}}^2\leq\frac{1}{\eta^2}\norm{\frac{\partial n^*}{\partial \rt}}^2\leq \frac{Ce^{-\ftf \rt}}{\eta^2}\leq \frac{C}{\eta^2}\quad\text{and}\quad |\grad_\omega n|^2\leq \frac{C}{\dist_\M^2(\omega,\blue{\P})}\leq\frac{C}{\eta^2}
    \, ,
\end{equation*}
using \eqref{bound:om1:rad} and \eqref{bound:om1:tan}. Then on $\partial \Om_2$, we were able to estimate the derivatives by
\begin{equation*}
    \norm{\frac{\partial n}{\partial r}}^2\leq \frac{C}{H^2\eta^2}\leq\frac{C}{\eta^2}\quad\text{and}\quad |\grad_\omega n|^2\leq \frac{C}{\eta^2}
    \, ,
\end{equation*}
since the tangential derivative estimate comes from the estimate in $\Om_1$.
Considering $\partial \Om_3$ and using both \eqref{bound:om3:rad} and \eqref{bound:om3:tan},
\begin{equation*}
    \norm{\frac{\partial n}{\partial r}}^2\leq\frac{C}{\eta^2}\left(\frac{1}{1+h^2}\right)\leq \frac{C}{\eta^2}\quad\text{and}\quad |\grad_\omega n|^2\leq |\grad_\omega n|^2\leq \blue{\frac{1}{\eta^2}\left(\frac{C}{1+h^2}\right)}\leq\frac{C}{\eta^2}
    \, ,
\end{equation*}
since \blue{$\dist_\M(\omega,p_j)=\eta$} on the boundary between $\Om_3$ and $\Om_5$. On $\partial \Om_4$, we also saw from \eqref{bound:om4:rad} and \eqref{bound:om4:tan} that
\begin{equation*}
    \norm{\frac{\partial n}{\partial r}}^2\leq\frac{C}{\eta^2}\quad\text{and}\quad |\grad_\omega n|^2\leq\blue{\frac{1}{\eta^2}\left(\frac{C}{\blue{\eta^{-2}\dist^2_\M(\omega,p_j)}+1}\right)}\leq\frac{C}{\eta^2}
    \, ,
\end{equation*}
so it remains to bound the gradient on the intersection of the boundary of $\Om_5^{j}$ with \answ{$\M^+$} and the boundary with the exterior region where $\Qc=Q_\infty$.
On this region of \answ{$\M^+$} it holds that $n=v$, so $|\grad_\omega n|=|\grad_\omega v|\leq C\eta^{-1}$ here. 
Finally in the exterior region, $\Qc=Q_\infty$, so along this boundary, $\ph$ is constant, thus the Lipschitz constant here is zero.
We can see that this means $\ph$ is Lipschitz on $\partial \Om_5^{j}$, with a Lipschitz constant 
\begin{equation*}
    \L_{\eta}\leq \frac{C}{\eta}
\end{equation*}
for some constant $C>0$. The same strategy is used in this region as in the spherical case described in \cite{bls}. Following a similar argument, we apply Theorem 2.10.43 from \cite{FedererGMT}, in order to extend $\ph$ to all of $\Om_5^{j}$ and in doing so, $\ph$ remains Lipschitz on this region with the same Lipschitz constant $\L_{\eta}$. Then one can define $n$ on $\Om_5^{j}$ by
\begin{equation*}
    n(r,\omega)=\left(\frac{v_1(\omega)\sin\ph(r,\omega)}{\sqrt{1-(v_3(\omega))^2}},\frac{v_2(\omega)\sin\ph(r,\omega)}{\sqrt{1-(v_3(\omega))^2}},\cos\ph(r,\omega)\right)
    \, .
\end{equation*}
We note that $n$ in this region is of a very similar form to $n$ on $\Om_2$ and $\Om_4$, so we can use the same computations to see that
\begin{equation*}
    \norm{\frac{\partial n}{\partial r}}^2=\norm{\frac{\partial \ph}{\partial r}}^2\leq|\grad \ph|^2\leq\L_{\eta}^2\leq\frac{C}{\eta^2}
    \, .
\end{equation*}
For the tangential derivatives, the computations from $\Om_2$ and $\Om_4$ allow us to deduce the estimate
\begin{equation*}
    |\grad_\omega n|^2\leq C|\grad_\omega v|^2+|\grad_\omega \ph|^2\leq  \frac{C}{\eta^2}
    \, .
\end{equation*}
Since $g$ is bounded, we can estimate the total energy in this region by
\begin{equation*}
    \eta \exe(\Qc;\Om_5^{j})\leq\int_{\Om_5^{j}}\left(\eta|\grad n|^2+\frac{1}{\eta}g(n)\right)\, dx\leq\int_{\Om_5^{j}}\frac{C}{\eta}\, dx\leq CH\blue{\H^2(}A^{j}_\eta\blue{)}
    \, .
\end{equation*}
Therefore,
\begin{equation*}
    \limsup_{\xi,\eta\to0}\: \eta \exe(\Qc;\Om_5^{j})=0
    \, ,
\end{equation*}
so the energy in all of $\Om_5$ is negligible, i.e.
\begin{equation*}
    \limsup_{\xi,\eta\to0}\: \eta \exe(\Qc;\Om_5)=0
    \, .
\end{equation*}

\bigskip

\noindent\underline{\textit{Conclusion}}: Now we are able to put all of the regions together. \blue{Recalling that the energy on $\C_{2H\eta}(\M^0)$ is zero, we write}
\begin{equation*}
    \eta \exe(\Qc;\C_{2H\eta}(\M^+\cup\M^0))=\sum_{k=1}^5\eta \exe(\Qc;\Om_k)
    \, ,
\end{equation*}
and it holds that
\begin{align*}
    \limsup_{\xi,\eta\to0}\: \eta \exe(\Qc;\blue{\C_{2H\eta}(\M^+\cup\M^0)})
    \leq\int_{\M^+}\ftf(1-\blue{v_3})\, d\H^2(\omega)+\Psi(H)
    \, .
\end{align*}
\end{proof}

\answ{Using both Lemma \ref{bdry-data} and Proposition \ref{recov} we are now able to prove  Theorem \ref{thm:upper}.}
\begin{proof}[Proof of Theorem \ref{thm:upper}]
    \answ{Let $v^\d_+$ be the family of vector fields described in Lemma \ref{bdry-data}, then from Proposition \ref{recov} we obtain a family of maps $\Qcd$ which satisfy
    \begin{equation*}
        \limsup_{\xi,\eta\to0}\: \eta \exe(\Qcd;\C_{2H\eta}(\M^+\cup\M^0))
    \leq\int_{\M^+}\ftf(1-v^\d_{+,3})\, d\H^2(\omega)+\Psi(H)
    \, .
    \end{equation*}
    In order to do the equivalent construction on $\M^-$, we define the reflection $\sigma:\R^3\to\R^3$ by
    \begin{equation*}
        \sigma(x_1,x_2,x_3)=(x_1,x_2,-x_3)
        \, .
    \end{equation*}
    Now using Lemma~\ref{bdry-data} we can construct a family of vector fields $\tilde{v}^\d_-$ on $\sigma(\M^-\cup\M^0)$ and we remark that choosing $v^\d_-=\sigma\circ\tilde{v}^\d_-\circ\sigma$, we have that $v^\d_-$ is a family of unit vector fields on $\M^-\cup\M^0$. Next we apply Proposition \ref{recov} (up to changing the manifold $\M$ to $\sigma(\M)$) to obtain a family of maps $\tilde{Q}^{H,\d}_{\xi,\eta}$ on $\C_{2H\eta}(\sigma(\M^-\cup\M^0))$ such that
    \begin{equation*}
        \limsup_{\xi,\eta\to0}\: \eta \exe(\tilde{Q}_{\xi,\eta}^{H,\d};\C_{2H\eta}(\sigma(\M^-\cup\M^0)))
    \leq\int_{\sigma(\M^-)}\ftf(1-\tilde{v}^\d_{-,3})\, d\H^2(\omega)+\Psi(H)
    \, .
    \end{equation*}
    As seen in the proof of Proposition \ref{recov}, $\tilde{Q}_{\xi,\eta}^{H,\d}=n\otimes n-\frac{1}{3}I$ is defined on $\C_{2H\eta}(\sigma(\M^-\cup\M^0)$, so to reflect this onto $\C_{2H\eta}(\M^-\cup\M^0)$, we define $Q_{\xi,\eta}^{H,\d}$ by
    \begin{equation*}
        Q_{\xi,\eta}^{H,\d}=\big(\sigma\circ n\circ\sigma\big)\otimes \big(\sigma\circ n\circ\sigma\big)-\frac{1}3I
        \, .
    \end{equation*}
    Note that this new definition of $Q_{\xi,\eta}^{H,\d}$ agrees with the previous one on $\C_{2H\eta}(\M^0)$ since both are identically $Q_\infty$ here. By the change of variable $\omega=\sigma(\tilde{\omega})$, we have
    \begin{equation*}
        \limsup_{\xi,\eta\to0}\: \eta \exe(Q_{\xi,\eta}^{H,\d};\C_{2H\eta}(\M^-\cup\M^0))
    \leq\int_{\M^-}\ftf(1+v^\d_{-,3})\, d\H^2(\tilde\omega)+\Psi(H)
    \, .
    \end{equation*}
    Putting together both halves of the manifold, we obtain the upper bound
    \begin{equation*}
        \limsup_{\xi,\eta\to0}\: \eta \exe(\Qcd)
    \leq\int_{\M^+}\ftf(1-v^\d_{+,3})\, d\H^2(\omega)+\int_{\M^-}\ftf(1+v^\d_{-,3})\, d\H^2(\omega)+2\Psi(H)
    \, .
    \end{equation*}
    Now let $\d_k\to 0$ as $k\to\infty$ be any sequence, then by Lemma \ref{bdry-data}, $v_{+,3}^{\d_k}\to\sqrt{1-\nu_3^2}$ and $v_{-,3}^{\d_k}\to-\sqrt{1-\nu_3^2}$, so
    \begin{equation*}
       \limsup_{k\to\infty}\left( \limsup_{\xi,\eta\to0}\: \eta \exe(Q_{\xi,\eta}^{H,\d_k})\right)
    \leq\int_{\M}\ftf\Big(1-\sqrt{1-\nu_3^2}\Big)\, d\H^2(\omega)+2\Psi(H)
    \, .
    \end{equation*}
    Note that the integral is over $\M$ as opposed to being over $\M^+\cup\M^-$, but this change has no impact since the integrand is zero on $\M^0$. Finally, taking $H\to\infty$ and recalling that $\Qm$ is a minimizer of $\exe$, we can see that}
    \begin{equation}\label{upper-full}
        \limsup_{\xi,\eta\to0}\: \eta\exe(\Qm)\leq\int_\M\ftf\Big(1-\sqrt{1-\nu_3^2}\Big)\, d\H^2(\omega)
    \end{equation}
    \answ{since $\eta\exe(\Qm)\leq\eta\exe(\Qcdk)$}.
    It is clear that
    \begin{equation*}
        0\leq\liminf_{\xi,\eta\to0}\:\eta\exe(\Qm;\Omega\setminus \C_{r_0}(\M))\leq\limsup_{\xi,\eta\to0}\:\eta\exe(\Qm;\Omega\setminus \C_{r_0}(\M))
        \, ,
    \end{equation*}
    \blue{where $r_0$ was defined in \eqref{def:r0}}, but applying Theorem \ref{thm:lower} and \eqref{upper-full} we get
    \begin{align*}
        \limsup_{\xi,\eta\to0}\:\eta\exe(\Qm;\Om\setminus\C_{r_0}(\M))=\limsup_{\xi,\eta\to0}\eta\exe(\Qm)-\liminf_{\xi,\eta\to0}\:\eta\exe(\Qm;\C_{r_0}(\M))\leq0
    \end{align*}
    therefore
    \begin{equation}\label{ext:energy}
        \lim_{\xi,\eta\to0}\:\eta\exe(\Qm;\Om\setminus\C_{r_0}(\M))=0
        \, .
    \end{equation}
    This in essence says that all of the energy is concentrated in a small layer around $\M$. 
    Now let $U\subset\M$, then
    \begin{equation*}
        \eta\exe(\Qm;\C_{r_0}(U))
        =
        \eta\exe(\Qm)
        -\eta\exe(\Qm;\C_{r_0}(U^c))
        -\eta\exe(\Qm;\Om\setminus\C_{r_0}(\M))
        \, .
    \end{equation*}
    So neglecting the energy on $\C_{r_0}(U^c)$ (which is negative) and using \eqref{upper-full} and \eqref{ext:energy}, we get that
    \begin{equation*}
        \limsup_{\xi,\eta\to0}\:\eta\exe(\Qm;\C_{r_0}(U))\leq\int_U\ftf\Big(1-\sqrt{1-\nu_3^2}\Big)\, d\H^2(\omega)
        \, .
    \end{equation*}
    Note that invoking Theorem~\ref{thm:lower} for $U^c$ we can even show that the limsup in the above is actually a limit.oll
\end{proof}

\section{Analysis of Possible Defects}\label{sec:lines}
The energy estimates from the upper and lower bound suggest that the energy concentrates on the surface of $\M$ as $\xi,\eta\to0$ and we do not see any additional terms which would result from line defects. 
In the following we further analyze the possibility of line singularities and how they impact the energy. 
For the discussion of line defects we distinguish two asymptotic regimes as $\xi,\eta\to 0$:
\begin{equation*}
    \eta|\ln\xi|\to\beta\in(0,\infty]\quad\text{and}\quad \eta|\ln\xi|\to0
    \, .
\end{equation*}
In the case of strong homeotropic anchoring, the former regime is studied in \cite{ACS2021, ACS2024} and the existence (for small $\beta$) / non-existence (for large $\beta$) of line defects has been investigated. 
The latter regime is considered in \cite{abl}, hinting at the existence of line defects. 

For our planar degenerate boundary conditions, we believe that in the first case, when $\eta|\ln\xi|\to\beta\in(0,\infty]$, minimizers cannot exhibit line defects. 
However, this does not follow from our analysis and proving this statement is beyond the scope of this investigation. 
In lieu, we show that if minimizers have a line defect, then as $\xi,\eta\to 0$, the defect will shrink to a point. 
This result is seen by Theorem \ref{thm:line_defects_shrink_0}.

For the second case when $\eta|\ln\xi|\to 0$, the energetic cost of a line defect is too small so that our estimates cannot detect them. 
A finer analysis would be required to study line defects in this regime. 

To make precise what we mean by ``line defect'', we use the notion of $d-$dimensional flat chains with values in a finite coefficient group, introduced by W.\ Fleming \cite{Fleming} in 1966.
In our specific application, the coefficient group  \answ{is given by the fundamental group of $\N$ which is isomorphic to} $\mathbb{Z}_2$, thus allowing us to see one-dimensional flat chains ($d=1$) as a measure theoretic generalization of classical (smooth) lines in $\mathbb{R}^3$ and avoid problems involving multiplicities. 
The appropriate generalization of ``size'' (in the case $d=1$ the length of the line, the area of a surface for $d=2$), is given by the mass functional $\MM$.
Another natural norm for flat chains is the so called flat norm: 
For a $d-$dimensional flat chain $S$ we define
\begin{align*}
    \FF(S) 
    \ := \
    \inf\{\MM(T) + \MM(R) \: : \: S=\partial T + R \}
    \, ,
\end{align*}
where the infimum is taken over all $(d+1)-$dimensional flat chains $T$ and $d-$dimensional flat chains $R$.

Roughly speaking, one can consider the set $\Qm^{-1}(\B)$, where $\B$ (as in \eqref{set:B}) is the set of negative uniaxial $Q-$tensors for which the projection $P$ onto $\N$ is not well-defined. 
This set $\Qm^{-1}(\B)$ is in general a very irregular set. 
Evoking the Thom transversality theorem, one can show that for almost every $Q\in\sym$, the set $(\Qm-Q)^{-1}(\B)$ is much more regular \answ{and can be given the structure of a smooth, finite complex.
Arguing locally, one can consider two-dimensional disks $B^2_r(x)$ (for some small enough $r>0$) that are intersecting the complex transversely at a single point, the center $x$ of the disk.
The set $(\Qm-Q)(\partial B^2_r(x))\subset\sym\setminus\B$ forms a loop and since it doesn't intersect $\B$, we can apply the projection $P$ onto $\N$.
Therefore, $(P\circ(\Qm-Q))(\partial B^2_r(x))$ defines a closed loop in $\N$ which allows us to associate to it an element in the fundamental group $\pi_1(\N)$, which we take as coefficient for our flat chain.} 
This construction is made precise in \cite{CO1} where an operator $\mathbf{S}$ is introduced, to pass from $\Qm$ to a flat chain $\mathbf{S}_Q(\Qm)$.
This operator is used to study various problems in \cite{CO2}.
The most important property for us is that $\mathbf{S}$ preserves the topological information of $\Qm$, allowing us to identify $\mathbf{S}_Q(\Qm)$ with the defect set of $\Qm$.


\begin{theorem}\label{thm:line_defects_shrink_0}
    Let $\Qm$ minimize $\exe$ with $\Qm\in\A$. If
    \begin{equation*}
        \frac{\eta}{\xi}\to\infty\quad\text{and in addition}\quad\eta|\ln\xi|\to\beta\in(0,\infty]\quad\text{as}\quad\xi,\eta\to0
        \, ,
    \end{equation*}
    then, for any $0<\delta^*<\dist(\N,\B)$, it holds
    \begin{equation*}
        \int_{B_{\delta^*}}\FF(\mathbf{S}_Q(\Qm)) \dx Q
        \ \to \ 
        0
        \quad\text{as}\quad
        \xi,\eta\to0
        \, ,
    \end{equation*}
    where $B_{\delta^*}=B_{\delta^*}(0)\subset\sym$.
\end{theorem}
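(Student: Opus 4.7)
The plan is to combine the sharp two-sided energy asymptotics established in Theorems \ref{thm:upper} and \ref{thm:lower} with a slicing inequality in the spirit of Canevari--Orlandi \cite{CO1, CO2}, which controls the integrated flat norm of the defect chain $\mathbf{S}_Q(\Qm)$ by a Ginzburg-Landau ``excess'' energy carrying the characteristic factor $|\ln\xi|^{-1}$.

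Applying Theorem \ref{thm:upper} and Theorem \ref{thm:lower} with $U=\M$, together with \eqref{ext:energy}, we first obtain the sharp asymptotic
\begin{equation*}
    \lim_{\xi,\eta\to 0}\: \eta\exe(\Qm) \ = \ \ftf\int_\M \bigl(1-\sqrt{1-\nu_3^2}\bigr)\, d\H^2 \ =: \ \mathcal{S}_\M \, ,
\end{equation*}
so that the excess $\Delta_{\xi,\eta} := \eta\exe(\Qm)-\mathcal{S}_\M$ vanishes as $\xi,\eta\to 0$. Moreover, the recovery sequence $\Qc$ from Section \ref{sec:upper} shows that $\mathcal{S}_\M/\eta$ is asymptotically attainable by configurations free of one-dimensional topological singularities: the only singularities of $\Qc$ are finitely many point defects on $\M$, which do not register in the $1$-dimensional flat chain $\mathbf{S}_Q$ and whose energetic cost is negligible. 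Heuristically, this decouples the surface concentration of the energy from any line-defect--generating mechanism.

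Next, one invokes a Canevari--Orlandi type slicing inequality: for any $\delta^* < \dist(\N,\B)$ there exists $C=C(\delta^*)>0$ such that
\begin{equation*}
    \int_{B_{\delta^*}}\FF\bigl(\mathbf{S}_Q(\Qm)\bigr)\, dQ \ \leq \ \frac{C}{|\ln\xi|}\Bigl(\exe(\Qm) - \frac{\mathcal{S}_\M}{\eta}\Bigr) \: + \: o\Bigl(\frac{1}{|\ln\xi|}\Bigr) \, .
\end{equation*}
The factor $|\ln\xi|^{-1}$ encodes the usual vortex-cost principle: a line defect detected at a regular level $Q\in B_{\delta^*}$ requires at least $c|\ln\xi|$ units of Ginzburg-Landau energy per unit length, and the only budget for such payment beyond the (defect-free) surface contribution $\mathcal{S}_\M/\eta$ is $\Delta_{\xi,\eta}/\eta$. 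Combining the two ingredients and using $\eta|\ln\xi|\to\beta\in(0,\infty]$,
\begin{equation*}
    \int_{B_{\delta^*}}\FF\bigl(\mathbf{S}_Q(\Qm)\bigr)\, dQ \ \leq \ \frac{C\,\Delta_{\xi,\eta}}{\eta|\ln\xi|} \: + \: o\Bigl(\frac{1}{|\ln\xi|}\Bigr) \ \longrightarrow \ 0 \, ,
\end{equation*}
since $\Delta_{\xi,\eta}\to 0$ while $\eta|\ln\xi|$ is eventually bounded below by a positive constant.

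The main technical hurdle is the slicing inequality itself: the existing proofs in \cite{CO1, CO2} are formulated for the pure Ginzburg-Landau $Q$-tensor functional, and transferring them here requires accommodating the non-negative magnetic term $g(Q)/\eta^2$ (which is Lipschitz away from the origin and hence should not introduce alternative vortex-formation mechanisms that evade the logarithmic cost), and cleanly subtracting the surface-concentrated energy. A natural strategy is to apply the slicing estimate to the pair $(\Qm,\Qc)$ rather than to $\Qm$ alone, so that only the genuine bulk Ginzburg-Landau excess appears on the right-hand side and the surface boundary-layer contribution, already certified to be topologically trivial by the explicit construction in Section \ref{sec:upper}, drops out.
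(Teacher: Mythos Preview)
Your proposal has a genuine gap: the ``Canevari--Orlandi type slicing inequality'' you invoke, namely
\[
\int_{B_{\delta^*}}\FF\bigl(\mathbf{S}_Q(\Qm)\bigr)\, dQ \ \leq \ \frac{C}{|\ln\xi|}\Bigl(\exe(\Qm) - \frac{\mathcal{S}_\M}{\eta}\Bigr) + o\Bigl(\frac{1}{|\ln\xi|}\Bigr),
\]
is not what \cite{CO1,CO2} actually prove. Their Theorem C provides two things: (i) compactness, i.e.\ existence of a finite-mass flat chain $S$ with $\int_{B_{\delta^*}}\FF(\mathbf{S}_Q(\Qm)-S)\,dQ\to 0$, and (ii) a $\Gamma$-liminf estimate $\liminf \frac{1}{|\ln\xi|}\ex(\Qm;U)\geq\MM(S\restr U)$ on open sets. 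There is no direct ``excess'' inequality bounding the flat norm of $\mathbf{S}_Q(\Qm)$ by the energy minus a defect-free reference level; in particular, subtracting $\mathcal{S}_\M/\eta$ inside the bound is not something their machinery delivers. You yourself flag this as the ``main technical hurdle,'' and your suggestion of applying the estimate ``to the pair $(\Qm,\Qc)$'' is too vague to fill the gap --- the CO theory is not set up as a relative estimate between two configurations.

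The paper's proof proceeds quite differently and avoids this difficulty entirely. One first uses Theorem~C of \cite{CO2} in its actual form to obtain a limit chain $S$; the task then reduces to showing $\MM(S)=0$. This is done by a domain-splitting argument: for small $\e>0$ choose an open set $U_1^\e$ that captures almost all of $S$ (via rectifiability and inner regularity) yet satisfies $\H^2(U_1^\e\cap\M)<\e$, and set $U_2^\e=(\overline{U_1^\e})^c$. On $U_1^\e$ the CO lower bound yields $\liminf\eta\exe(\Qm;U_1^\e\cap\Om)\geq\beta\,\MM(S\restr U_1^\e)$; on $U_2^\e$, Corollary~\ref{cor:open} gives the surface lower bound. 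Adding the two pieces, sending $\e\to 0$, and comparing with the upper bound of Theorem~\ref{thm:upper} forces $\beta\,\MM(S)\leq 0$, hence $S=0$. The key idea you are missing is precisely this separation of the line-defect contribution from the surface-layer contribution via a geometric decomposition of $\Om$, rather than an analytic subtraction inside an (unproved) excess inequality.
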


\begin{proof}
    We begin by defining the energy functional
    \begin{equation*}
        \ex(Q):=\int_\Om\frac{1}{2}|\grad Q|^2+\frac{1}{\xi^2}f(Q)\, dx
        \, ,
    \end{equation*}
    so $\exe(Q)\geq\ex(Q)$ for all $Q\in\A$. Now,
    \begin{equation*}
        \limsup_{\xi,\eta\to0}\frac{\ex(\Qm)}{|\ln\xi|}\leq\limsup_{\xi,\eta\to0}\frac{\eta\exe(\Qm)}{\eta|\ln\xi|}\leq\frac{1}{\beta}\int_\M\ftf\Big(1-\sqrt{1-\nu_3^2}\Big)\, d\H^2<+\infty
        \, .
    \end{equation*}
    Applying Theorem C from \cite{CO2}, there exists a finite mass flat chain $S$ with values in $\mathbb{Z}_2$ such that $\int_{B_{\delta^*}}\FF(\mathbf{S}_Q(\Qm) - S) \dx Q \to 0$ as $\eta,\xi\to 0$ and
    \begin{equation}\label{thmc}
    \liminf_{\xi,\eta\to0}\frac{\eta\ex(\Qm;U_1^\e\cap\Om)}{\eta|\ln\xi|}\geq \mathbb{M}(S\restr U_1^\e)
    \, ,
    \end{equation}
    where $S\restr U_1^\e$ denotes the restriction of $S$ to $U_1^\e$, an open set which we have yet to define.
    It is therefore enough to show that $S=0$.
    We note that since $\exe\geq\ex$, it holds
    \begin{equation*}
        \liminf_{\xi,\eta\to0}\eta\exe(\Qm;U_1^\e\cap\Om)\geq\beta\mathbb{M}(S\restr U_1^\e)
        \, .
    \end{equation*}
    We remark it follows from \cite{White} that $S$ is rectifiable as a finite mass flat chain with values in the discrete group $\mathbb{Z}_2$. 
    By inner regularity of the measure $\MM(S)$ we can choose a subset $S^\e$ of the rectifiability set of $S$ which is compact and $\MM(S)-\MM(S^\e)<\e$. 
    We then choose an open set $U_1^\e\subset\R^3$ such that $S^\e\subset U_1^\e$ and $\H^2(U_1^\e\cap\M)<\e$. 
    It follows from this choice of $U_1^\e$ that
    \begin{equation*}
        \MM(S)-\MM(S\restr U_1^\e)\leq\MM(S)-\MM(S^\e)<\e
        \, .
    \end{equation*}
    Furthermore, choosing $U_2^\e=(\overline{U_1^\e})^c$, we can see that $\H^2(\M\setminus U_2^\e)<\e$ and
    \begin{equation*}
        \eta\exe(\Qm)= \eta\exe(\Qm;U_1^\e\cap\Om)+\eta\exe(\Qm;U_2^\e\cap\Om)
        \, .
    \end{equation*}
    Using Corollary \ref{cor:open} and \eqref{thmc} we can see that,
    \begin{equation*}
        \liminf_{\xi,\eta\to0}\: \eta\exe(\Qm)\geq \beta\mathbb{M}(S\restr U_1^\e)+\int_{\M\cap U_2^\e}\ftf\Big(1-\sqrt{1-\nu_3^2}\Big)\, d\H^2
        \, .
    \end{equation*}
    Now since $\mathbb{M}(S)-\mathbb{M}(S\restr U_1^\e)<\e$ and $\H^2(\M\setminus U_2^\e)<\e$, we take $\e\to 0$ to see that
    \begin{equation*}
        \liminf_{\xi,\eta\to0}\eta\exe(\Qm)\geq\int_{\M}\ftf\Big(1-\sqrt{1-\nu_3^2}\Big)\, d\H^2+\beta\mathbb{M}(S)\, .
    \end{equation*}
    Recall that from Theorem \ref{thm:upper} we have
    \begin{equation*}
        \limsup_{\xi,\eta\to0}\:\eta\exe(\Qm)\leq \int_{\M}\ftf\Big(1-\sqrt{1-\nu_3^2}\Big)\, d\H^2
        \, ,
    \end{equation*}
    so this implies that $\mathbb{M}(S)=0$ and hence $S=0$.
\end{proof}

\section{Rotations of $\M$}\label{sec:Rotations}
In the previous sections, we were concerned with a given manifold $\M$ and a given magnetic field direction {$e_3$}.
From a physical point of view, colloidal particles are free to rotate inside the liquid crystal and experimentally observed configurations are hence found in an equilibrium state with respect to the magnetic field induced alignment at infinity.
In this section we are thus exploring the energy minimizing orientations of $\M$ with respect to the magnetic field. 

From the energy bounds obtained in Theorems \ref{thm:lower} and \ref{thm:upper} we see that the limiting energy of minimizers is
\begin{equation*}
    \lim_{\xi,\eta\to0}\: \eta\exe(\Qm;\C_{r_0}(U))=\int_U\ftf\Big(1-\sqrt{1-\nu_3^2}\Big)\, d\H^2(\omega)=:E_0(U)
    \, .
\end{equation*}
Rather than rotate the manifold itself, we instead rotate the magnetic field, to change the preferred alignment at infinity. Minimizing amongst rotations of $\M$ is equivalent to minimizing over rotations of $n_\infty$, which we can see by a change of variables. Let $R\in SO(3)$ be a rotation matrix, then define the rotated manifold $R(\M)$ by
\begin{equation*}
    R(\M)=\{R\omega:\omega\in\M\}\ .
\end{equation*}
Now letting $\omegah=R\omega$, we have that $\nuh(\omegah)=R\nu(\omega)$, where $\nuh$ denotes the normal on $R(\M)$. Applying this change of variables to $E_0(R(\M))$, we obtain
\begin{align*}
    E_0(R(\M))&=\int_{R(\M)}\ftf\Big(1-\sqrt{1-(\nuh(\omegah)\cdot e_3)^2}\Big)\, d\H^2(\omegah)\\
    &=\int_\M\ftf\Big(1-\sqrt{1-(R\nu(\omega)\cdot e_3)^2}\Big)\, d\H^2(\omega)\\
    &=\int_\M\ftf\Big(1-\sqrt{1-(\nu(\omega)\cdot R^{-1}e_3)^2}\Big)\, d\H^2(\omega)
    \, .
\end{align*}
Therefore taking $n_\infty=R^{-1}e_3$, the new limiting energy is characterized by:
\answ{\begin{equation}\label{def:limit}
    E_0(\M;n_\infty):=\int_\M\ftf\Big(1-\sqrt{1-(\nu\cdot n_\infty)^2}\Big)\, d\H^2
\end{equation}}
We remark that we are abusing notation in the sense that the limiting energy $E_0(\M;\blue{n_\infty})$ was found for a specific class of manifolds $\M$, but we will compute $E_0(\M;\blue{n_\infty})$ for manifolds $\M$ which are not necessarily closed or $C^{\answ{2},1}$. \answ{However, it will be shown in Proposition \ref{prop:approx_C11} that these computations are still physically meaningful despite the lower regularity.}

We want to study the limit energy $E_0$ as a function of $n_\infty$, however exact minimizers are difficult to obtain in general. 
Rather we present a method to determine critical points of the direction for the magnetic field.
\answ{\begin{proposition}\label{prop:rotations}
    The direction $n_\infty\in \S^2$ is a critical point of the energy function $E_0(\M;\cdot)$ if and only if
    \begin{equation}\label{prop:rotations:eq}
        \int_\M \frac{(\nu\cdot n_\infty)\nu-(\nu\cdot n_\infty)^2n_\infty}{\sqrt{1-(\nu\cdot n_\infty)^2}}\, d\H^2(\omega)=0
        \, .
    \end{equation}
\end{proposition}}
\begin{proof}
    To simplify notation, we let $n=n_\infty$, then we begin with a standard approach from the calculus of variations, we let $n$ vary by setting
    \begin{equation*}
        n_t=\frac{n+tm}{|n+tm|},\quad m\in\S^2
        \, .
    \end{equation*}
    We then evaluate
    \begin{equation*}
        \Big(\frac{d}{dt}E_0(\M;n_t)\Big)\Big|_{t=0}=\int_\M \frac{(\nu\cdot n)\nu-(\nu\cdot n)^2n}{\sqrt{1-(\nu\cdot n)^2}}\cdot m\, d\H^2
        \, .
    \end{equation*}
    If $n$ is a critical point, then this quantity is zero, so we obtain the condition that
    \begin{equation*}
        \int_\M \frac{(\nu\cdot n)\nu-(\nu\cdot n)^2n}{\sqrt{1-(\nu\cdot n)^2}}\cdot m\, d\H^2(\omega)=0
        \, .
    \end{equation*}
    Since $m\in\S^2$ was arbitrary, this is equivalent to \answ{\eqref{prop:rotations:eq}.}
    \answ{For the reverse implication, assume \eqref{prop:rotations:eq} holds.
    Then for any $m\in\S^2$
    \begin{equation*}
        \int_\M \frac{(\nu\cdot n)\nu-(\nu\cdot n)^2n}{\sqrt{1-(\nu\cdot n)^2}}\cdot m\, d\H^2(\omega)=\sum_{i=1}^3 m_i\int_\M \frac{(\nu\cdot n)\nu_i-(\nu\cdot n)^2n_i}{\sqrt{1-(\nu\cdot n)^2}}\, d\H^2(\omega)=0
        \, .
    \end{equation*}
    Therefore
    \begin{equation*}
        \Big(\frac{d}{dt}E_0(\M;n_t)\Big)\Big|_{t=0}=0
    \end{equation*}
    so by definition, $n$ is a critical point of $E_0(\M;\cdot)$.}
\end{proof}

We are able to get more information when the manifold $\M$ has an axis of rotational symmetry. In this case, we reduce the degree of freedom in $n_\infty$ that we are minimizing over. If $\M$ has an axis of rotational symmetry, then $\M$ can be translated and rotated such that the axis of symmetry is now the $e_3-$axis. This leads us to the next proposition.
\begin{proposition}\label{prop:sym}
    Let $\M$ be a \answ{$C^{2,1}$}, connected, compact 2-manifold \blue{without boundary and} with rotational symmetry around the $e_3-$axis, then we can describe the limiting energy by:
    \begin{equation}\label{eqn:rot}
        E_0(\M;n_\infty)=\int_0^1\int_0^{2\pi}\ftf\Big(1-\sqrt{1-(\nu(s,\theta)\cdot n_\infty)^2}\Big)\gamma_1(s)\sqrt{\gamma_1'(s)^2+\gamma_3'(s)^2}\ d\theta ds
        \, ,
    \end{equation}
    where $\gamma:[0,1]\to\{(x_1,0,x_3):x_1\geq0\}$ parameterizes the curve generated by the intersection $\M\cap\{(x_1,0,x_3):x_1\geq0\}$ and 
    \begin{equation*}
        \nu(s,\theta)=\frac{1}{|\gamma'(s)|}\begin{pmatrix}
            \cos\theta&\sin\theta\\-\sin\theta&\cos\theta
        \end{pmatrix}\begin{pmatrix}
            \gamma_3'(s)\\-\gamma_1'(s)
        \end{pmatrix}
        \, .
    \end{equation*}
    Moreover, for any rotation $R_\phi$ about the $e_3-$axis,
    \begin{equation*}
    E_0(\M;n_\infty)=E_0(\M;R_\phi n_\infty)
    \, .
    \end{equation*}
\end{proposition}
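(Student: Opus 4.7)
The plan is to use the axisymmetry of $\M$ to pass to coordinates $(s,\theta)$ that parameterize $\M$ via the generating curve $\gamma$, compute the area element and normal vector explicitly, and substitute into the definition of $E_0(\M;n_\infty)$. For the second statement, I will use that a rotation of $n_\infty$ by $R_\phi$ about $e_3$ corresponds to a shift in $\theta$ in this parameterization, which leaves the integral invariant by $2\pi$-periodicity.

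First I would parameterize $\M$ by the map $X:[0,1]\times[0,2\pi]\to\R^3$ defined by
\begin{equation*}
    X(s,\theta) = \big(\gamma_1(s)\cos\theta,\ \gamma_1(s)\sin\theta,\ \gamma_3(s)\big)
    \, .
\end{equation*}
A direct computation of $X_s\times X_\theta$ gives
\begin{equation*}
    X_s\times X_\theta = \gamma_1(s)\big(\gamma_3'(s)\cos\theta,\ \gamma_3'(s)\sin\theta,\ -\gamma_1'(s)\big)
    \, ,
\end{equation*}
so the surface area element is $d\H^2 = \gamma_1(s)\sqrt{(\gamma_1'(s))^2+(\gamma_3'(s))^2}\, ds\, d\theta$ and the outward unit normal (up to an orientation choice) is precisely the $\nu(s,\theta)$ given in the statement, once interpreted as a $3$-vector whose in-plane part is rotated by $\theta$ and whose vertical part is $-\gamma_1'/|\gamma'|$. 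Substituting this area element into the definition of $E_0(\M;n_\infty)=\int_\M \ftf(1-\sqrt{1-(\nu\cdot n_\infty)^2})\, d\H^2$ and applying Fubini yields formula \eqref{eqn:rot}.

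For the rotational invariance, let $R_\phi\in SO(3)$ denote rotation about $e_3$ by angle $\phi$. The key observation is that under the parameterization above,
\begin{equation*}
    \nu(s,\theta)\cdot R_\phi n_\infty = \nu(s,\theta-\phi)\cdot n_\infty
    \, ,
\end{equation*}
which is immediate from writing out the $R_\phi$-action on the first two components of $n_\infty$ and using the angle-addition identities on $\cos\theta, \sin\theta$ in $\nu(s,\theta)$. Since the measure $\gamma_1(s)|\gamma'(s)|\, ds\, d\theta$ is independent of $\theta$, changing variables $\theta\mapsto\theta-\phi$ in the $\theta$-integral (which is over a full period $[0,2\pi]$) gives $E_0(\M;R_\phi n_\infty) = E_0(\M;n_\infty)$.

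There is no real obstacle here; the main point is simply to set up the parameterization correctly and to verify the orientation of $\nu$ matches the outward normal on $\M$. One small subtlety is that the matrix notation for $\nu(s,\theta)$ in the statement is a compact way of encoding the $\theta$-dependence of the horizontal components of the normal; once unwound this is just the standard normal to a surface of revolution, so the computation is entirely routine.
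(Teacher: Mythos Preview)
Your proposal is correct and follows essentially the same approach as the paper: parameterize $\M$ via the generating curve, compute the area element and normal (the paper merely asserts this as a change of variables $\omega\mapsto(s,\theta)$, whereas you explicitly compute $X_s\times X_\theta$), and then use the identity $\nu(s,\theta)\cdot R_\phi n_\infty=\nu(s,\theta-\phi)\cdot n_\infty$ together with $2\pi$-periodicity in $\theta$ for the invariance claim. If anything, your version is slightly more explicit than the paper's.
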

\begin{proof}
    Since $\M$ is connected and closed, the intersection $\M\cap\{(x_1,0,x_3):x_1\geq0\}$ can be parameterized by a curve $\gamma$ and this curve has the property that either $\gamma$ is closed or its endpoints lie on the $e_3-$axis.

    The distance from $\gamma(s)$ to the $e_3$ axis is given by $\gamma_1(s)$ since $\gamma_2(s)=0$ and the arc length measure is $\sqrt{\gamma_1'(s)^2+\gamma_3'(s)^2}\ ds$. Next we can see that $\nu(s,\theta)$ as defined is obtained by applying a rotation to the normal for $\gamma$. Thus one can easily obtain \eqref{eqn:rot} from \answ{\eqref{def:limit}, using the} change of variables, $\omega\mapsto(s,\theta)$.

    Finally, to show that $E_0(\M;n_\infty)=E_0(\M;R_\phi n_\infty)$ we use the fact that
    \begin{equation*}
        \nu(s,\theta)\cdot R_\phi n_\infty=R_\phi^{-1}\nu(s,\theta)\cdot n_\infty=\nu(s,\theta-\phi)\cdot n_\infty
        \, .
    \end{equation*}
    From here we take advantage of the periodicity of $R_{\theta-\phi}$ and a change of variables $\widehat{\theta}=\theta-\phi$,
    \begin{align*}
        E_0(\M;R_\phi n_\infty)&=\int_0^1\int_0^{2\pi}\ftf\Big(1-\sqrt{1-(\nu(s,\theta-\phi)\cdot n_\infty)^2}\Big)\gamma_1(s)\sqrt{\gamma_1'(s)^2+\gamma'_2(s)^2}\, d\theta ds\\
        &=\int_0^1\int_0^{2\pi}\ftf\Big(1-\sqrt{1-(\nu(s,\widehat\theta)\cdot n_\infty)^2}\Big)\gamma_1(s)\sqrt{\gamma_1'(s)^2+\gamma'_2(s)^2}\, d\widehat\theta ds\\
        &=E_0(\M;n_\infty)
        \, .
    \end{align*}
\end{proof}

We next apply Proposition \ref{prop:sym} to determine the optimal orientation for some specific shapes $\M$.

\bigskip

\noindent\underline{\textit{The Spherocylindrical Particle}}: The first object that we wish to study is the \emph{spherocylindrical} particle which is a cylinder with two half spheres attached on the two openings of the cylinder and is denoted $\M_S$.
\begin{figure}[H]
    \centering
    \includegraphics[scale=0.33]{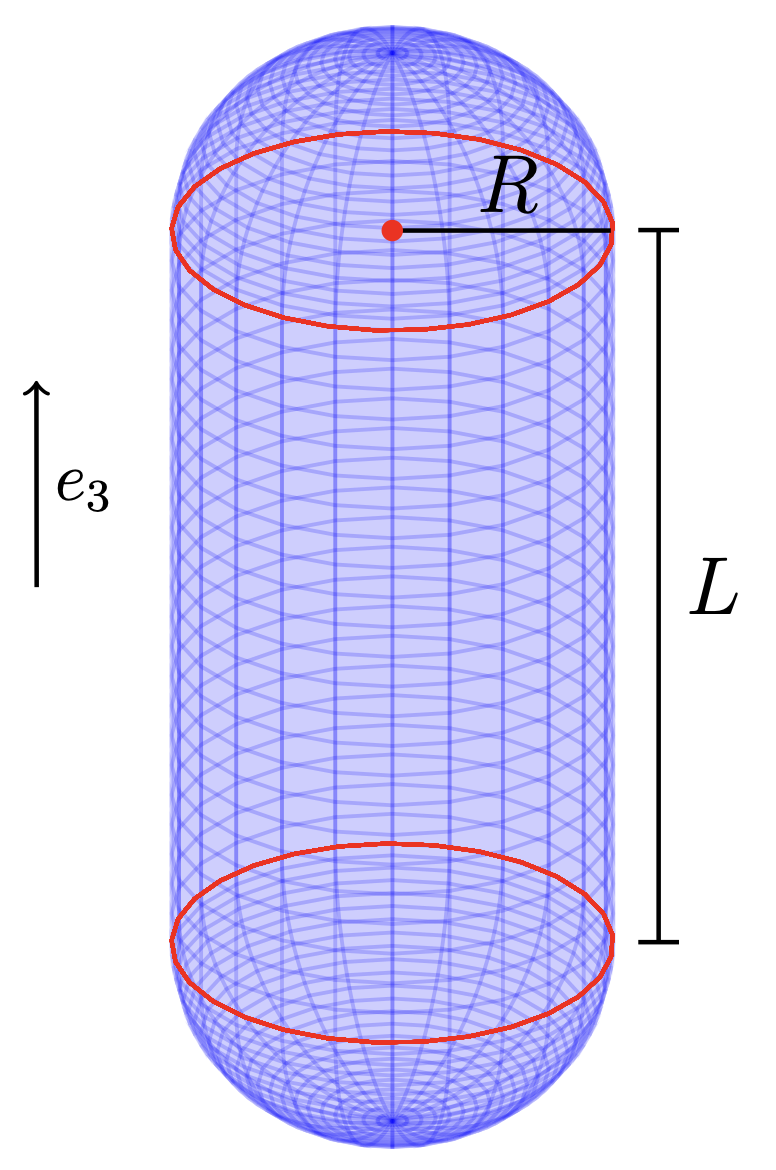}
    \caption{This figure shows the optimal orientation of a spherocylindrical particle with radius $R$ and cylinder length $L$.}
    \label{fig:capsule}
\end{figure}
The particle has radius $R$ and height $L+2R$ as observed in Figure \ref{fig:capsule} and one can also see that by constructing it in this way, the manifold is $C^\answ{\infty}$ \answ{away from the circles where the cylinder meets the half spheres}. We decompose $\M_S$ into three sets, the upper and lower half spheres of radius $R$, denoted by $\S^2_{R,+}$, $\S^2_{R,-}$ and the cylinder of radius $R$ and height $L$ denoted by $C_{R,L}$. 
So the corresponding energy is
\begin{equation*}
    E_0(\M_S;n_\infty)=\int_{\S^2_R}\ftf\Big(1-\sqrt{1-(\nu\cdot n_\infty)^2}\Big)\, d\H^2+\int_{C_{R,L}}\ftf\Big(1-\sqrt{1-(\nu\cdot n_\infty)^2}\Big)\, d\H^2
    \, .
\end{equation*}
Using the rotational symmetry of the sphere, $E_0(\S^2_R;n)=E_0(\S^2_R;m)$ for any $n,m\in\S^2$, so minimizing $E_0(\M_S;n_\infty)$ is equivalent to minimizing $E_0(C_{R,L};n_\infty)$. 
It is clear that if $n_\infty=\pm e_3$, then $\nu\cdot e_3=0$, so $n_\infty=\pm e_3$ is minimizing as $E_0(C_{R,L};\pm e_3)=0$. It suffices to show that this is the only minimum. Using the rotational symmetry of $C_{R,L}$ and that $\nu(s,\theta)=(\cos\theta,\sin\theta,0)$ we can see that
\begin{equation*}
    E_0(C_{R,L};n_\infty)=\int_{C_{R,L}}\ftf\Big(1-\sqrt{1-(\nu\cdot n_\infty)^2}\Big)\, d\H^2=\ftf RL\int_0^{2\pi}1-\sqrt{1-n_1^2\cos^2\theta}\, d\theta
    \, ,
\end{equation*}
for $n_\infty=(n_1,0,n_3)$. If $n_\infty\neq\pm e_3$, then $n_1\neq 0$ and $n_1^2>0$. Thus, on the interval $\theta\in[0,\pi/4]$, we have $n_1^2\cos^2\theta\geq \frac{n_1^2}{2}>0$. Therefore, $1-\sqrt{1-n_1^2\cos^2\theta}>0$ for $\theta\in[0,\pi/4]$. From this we can see that
\begin{equation*}
    E_0(\M_S;n_\infty) 
    \ > \ 
    E_0(\S^2_R;n_\infty) 
    \ = \
    E_0(\M_S;e_3)
    \ = \
    2\ftf\Big(2-\frac{\pi}{2}\Big)\pi R^2
    \, ,
\end{equation*}
so we conclude that $n_\infty=\pm e_3$ is the unique minimizer up to sign for $E_0(\M_S;\cdot)$. Using the convention that the complete elliptic integral of the second kind is given by
\begin{equation*}
    E(m)=\int_0^{\pi/2}\sqrt{1\answ{-}m\sin^2\theta}\, d\theta
    \, ,
\end{equation*}
we can write the energy $E_0(\M_S,n_\infty)$ as
\begin{equation*}
    E_0(\M_S,n_\infty)=2\ftf\Big(2-\frac{\pi}{2}\Big)\pi R^2+\ftf RL\left(2\pi-4\sqrt{1-n_1^2}E\left(\answ{\frac{1}{1-n_1^2}-1}\right)\right)
    \, .
\end{equation*}

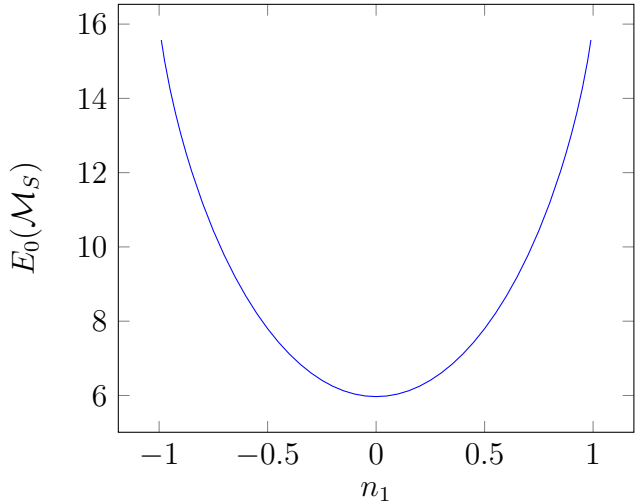
\begin{figure}[hbt!]
    \centering
    \begin{tikzpicture}
        \begin{axis}[xlabel=$n_1$, ylabel=$E_0(\M_S)$]
        \addplot[domain=-1:1,color=blue] coordinates {
        (-0.99,5.968925+9.60282)(-0.975,5.968925+9.0470157)(-0.95,5.968925+8.2881565)(-0.925,5.968925+7.6413149)(-0.9,5.968925+7.0668152)(-0.875,5.968925+6.5459916)(-0.85,5.968925+6.0679461)(-0.8,5.968925+5.2137358)(-0.75,5.968925+4.4678824)(-0.7,5.968925+3.8093796)(-0.65,5.968925+3.224605)(-0.6,5.968925+2.7040743)(-0.55,5.968925+2.2408598)(-0.5,5.968925+1.829728)(-0.45,5.968925+1.466629)(-0.4,5.968925+1.1483766)(-0.35,5.968925+0.872438)(-0.3,5.968925+0.63679118)(-0.25,5.968925+0.43982321)(-0.2,5.968925+0.2802611)(-0.15,5.968925+0.1571199)(-0.1,5.968925+0.0696658)(-0.05,5.968925+0.01739)
        (0,5.968925+0)(0.05,5.968925+0.01739)(0.1,5.968925+0.0696658)(0.15,5.968925+0.1571199)(0.2,5.968925+0.2802611)(0.25,5.968925+0.43982321)(0.3,5.968925+0.63679118)(0.35,5.968925+0.872438)(0.4,5.968925+1.1483766)(0.45,5.968925+1.466629)(0.5,5.968925+1.829728)(0.55,5.968925+2.2408598)(0.6,5.968925+2.7040743)(0.65,5.968925+3.224605)(0.7,5.968925+3.8093796)(0.75,5.968925+4.4678824)(0.8,5.968925+5.2137358)(0.85,5.968925+6.0679461)(0.875,5.968925+6.5459916)(0.9,5.968925+7.0668152)(0.925,5.968925+7.6413149)(0.95,5.968925+8.2881565)(0.975,5.968925+9.0470157)(0.99,5.968925+9.60282)
        };
        \end{axis}
    \end{tikzpicture}
    \caption{Graph of $E_0(\M_S;n_\infty)$ with constants $R=1$ and $L=2$ for $n_\infty=(n_1,0,n_3)$ where $n_1\in[-1,1]$ and $n_3=\sqrt{1-n_1^2}$ .}
    \label{fig:capsule-graph}
\end{figure}

The spherocylindrical particle represents a simple example of a non-symmetric colloidal particle and has been investigated in \cite{HGGAdP} and \cite{Hung} with normal anchoring condition.  
Elongated ellipsoidal shapes have been studied in \cite{TMMML}, for normal as well as tangential anchoring. 
The results in \cite[Fig. 4 and Fig. 6A]{TMMML} indicate, as our findings in Figure \ref{fig:capsule-graph}, that the preferred orientation is when the axis of symmetry of the elongated particle is parallel to the alignment at infinity. 
In the normal anchoring case, the preferred orientation is orthogonal to the director field \cite{HGGAdP}.

\bigskip

\noindent\underline{\textit{The Torus}}: We consider a torus with $R>r>0$ where $R$ and $r$ are as pictured in Figure \ref{fig:torus}.
\begin{figure}[hbt!]
    \centering
    \includegraphics[scale=0.42]{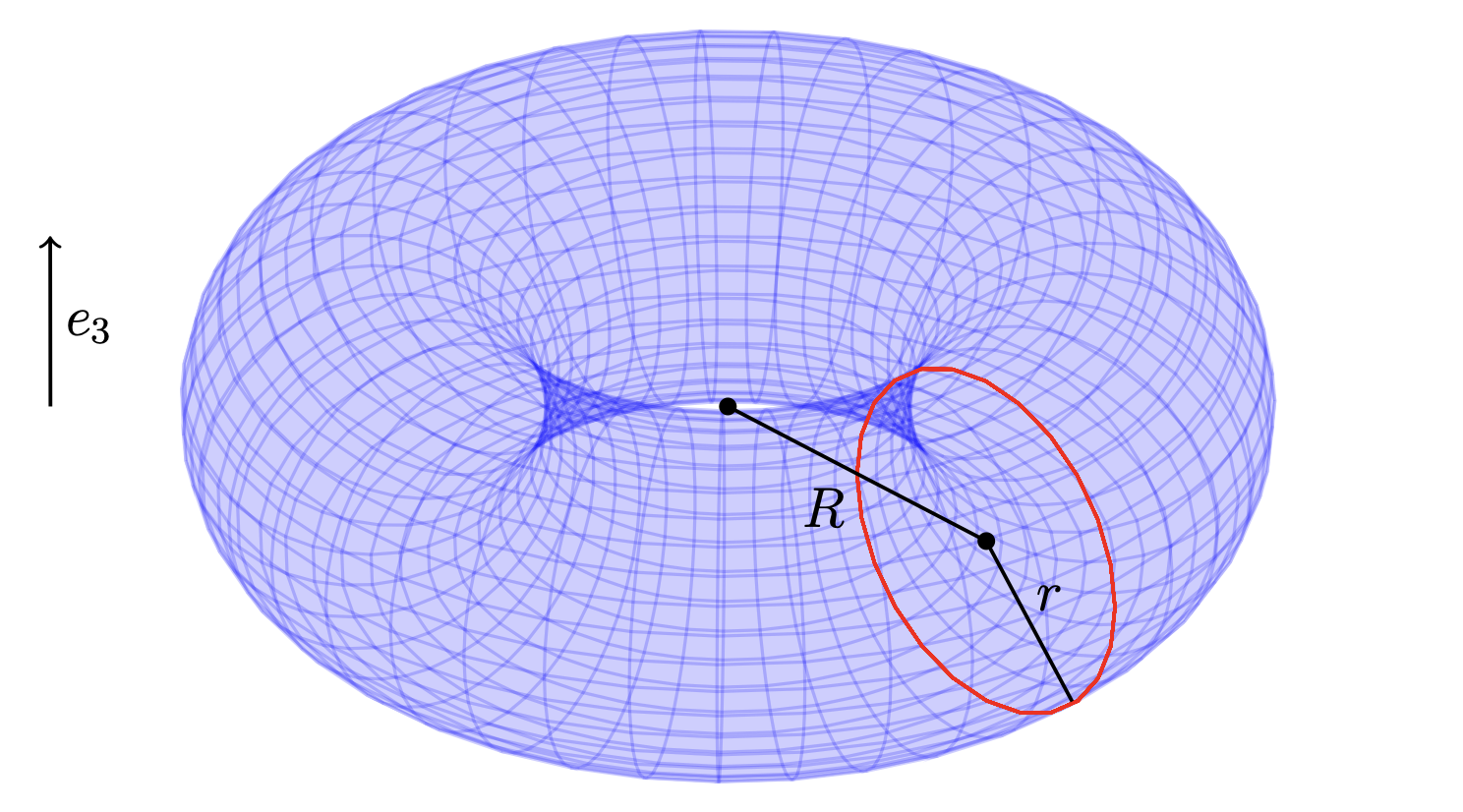}
    \caption{Torus of major radius $R>0$ and minor radius $r>0$. The globally energy minimizing alignment at infinity is \emph{orthogonal} to $e_3$.}
    \label{fig:torus}
\end{figure}

Applying the formula from Proposition \ref{prop:sym}, we obtain the energy
\begin{equation*}
    E_0(T^2;n_\infty)=2\pi r\ftf\int_0^1\int_0^{2\pi}(R-r\sin(2\pi s))\Big(1+\sqrt{1-(\nu(s,\theta)\cdot n_\infty)^2}\Big)\, d\theta ds
    \, ,
\end{equation*}
where $\nu(s,\theta)=(\cos\theta\sin(2\pi s),\sin\theta\sin(2\pi s),\cos(2\pi s))$. Using the rotation invariance of the energy as well as the identity $E_0(\M;n_\infty)=E_0(\M,-n_\infty)$, we can restrict $n_\infty=(n_1,0,n_3)$ where $n_1\geq 0$ and $-1\leq n_3\leq 1$, so $n_1=\sqrt{1-n_3^2}$. After a change of variables $\phi=2\pi s$, this gives the energy,
\begin{equation*}
    E_0(T^2;n_\infty)=r\ftf\int_0^{2\pi}\int_0^{2\pi}(R+r\sin\phi)\Big(1-\sqrt{1-(\sqrt{1-n_3^2}\cos\theta\sin\phi+n_3\cos\phi)^2}\Big)\, d\theta d\phi
    \, .
\end{equation*}
We can plot this energy as a function of $n_3$ on $[-1,1]$ to see that the minimum occurs precisely when $n_3=0$. This corresponds to a magnetic field which points in the plane orthogonal to $e_3$.

In \cite{Liu2013} the authors consider handlebodies with different genus $g\geq 1$ and tangential surface anchoring.
For the case of the torus ($g=1$) they conclude that the globally energy minimizing configuration is given when the normal vector of the plane of symmetry of the torus is orthogonal to the alignment at infinity \cite[Fig. 1 and 2]{Liu2013}. 
This is consistent with our findings, see also Figure \ref{fig:torus-graph}.

\begin{figure}[hbt!]
    \centering
    \begin{tikzpicture}
        \begin{axis}[xlabel=$n_3$, ylabel=$E_0(T^2)$]
        \addplot[domain=-1:1,color=blue] coordinates {
        (-1,63.504403)(-0.99,61.815481)(-0.98,60.511893)(-0.975,59.918862)(-0.95,57.297653)(-0.925,55.042305)(-0.9,53.026138)(-0.875,51.189197)(-0.85,49.496297)(-0.825,47.924279)(-0.8,46.456751)(-0.75,43.789151)(-0.7,41.424505)(-0.65,39.317792)(-0.6,37.437495)(-0.55,35.760484)(-0.5,34.269213)(-0.45,32.950054)(-0.4,31.792253)(-0.35,30.787237)(-0.3,29.928142)(-0.25,29.209488)(-0.2,28.626937)(-0.15,28.177129)(-0.1,27.857561)(-0.05,27.666499)(0,27.602923)(0.05,27.666499)(0.1,27.857561)(0.15,28.177129)(0.2,28.626937)(0.25,29.209488)(0.3,29.928142)(0.35,30.787237)(0.4,31.792253)(0.45,32.950054)(0.5,34.269213)(0.55,35.760484)(0.6,37.437495)(0.65,39.317792)(0.7,41.424505)(0.75,43.789151)(0.8,46.456751)(0.825,47.924279)(0.85,49.496297)(0.875,51.189197)(0.9,53.026138)(0.925,55.042305)(0.95,57.297653)(0.975,59.918862)(0.98,60.511893)(0.99,61.815481)(1,63.504403)
        };
        \end{axis}
    \end{tikzpicture}
    \caption{Graph of $E_0(T^2;n_\infty)$ for $n_\infty=(n_1,0,n_3)$ with $n_3\in[-1,1]$, $n_1=\sqrt{1-n_3^2}$ and $R=2$, $r=1$.}
    \label{fig:torus-graph}
\end{figure}
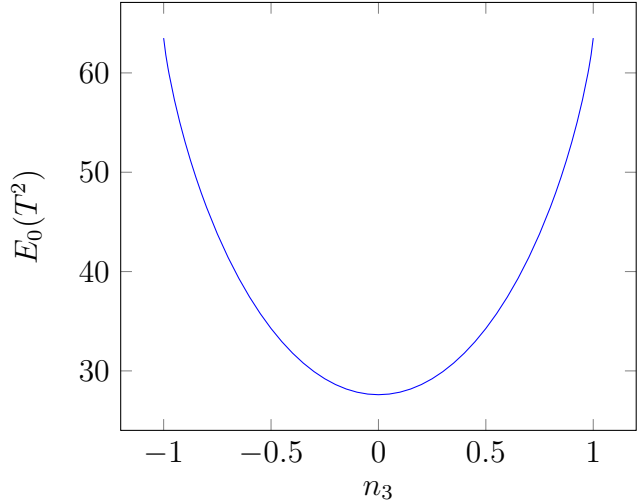

\bigskip
\newpage
\noindent\underline{\textit{The Cube}}: The last object that we analyze is a cube. 
Although the cube does not have an axis of rotational symmetry, its faces have a constant normal, so computing $E_0$ is straightforward. 
However, a cube $\M=\partial([-R/2,R/2]^3)$ is not $C^{\answ{2},1}$ so we will want to consider a smooth approximation of the cube $\M^\e$ and we will show in Proposition \ref{prop:approx_C11} below that the optimal orientation exhibits a continuous dependence on the approximation.
We first compute $E_0$ for $\M$ explicitly.
\begin{equation*}
    E_0(\M;n_\infty)=2R^2\ftf\Big(3-\sqrt{1-n_1^2}-\sqrt{1-n_2^2}-\sqrt{1-n_3^2}\Big)
    \, ,
\end{equation*}
but since $1-n_3^2=n_1^2+n_2^2$, we can write the energy in terms of only $n_1$ and $n_2$. We then optimize over the set
\begin{equation*}
    \{(n_1,n_2):n_1^2+n_2^2\leq 1\}
    \, ,
\end{equation*}
and from this we have eight global minima given by
\begin{equation*}
    n_\infty=\Big(\pm\frac{1}{\sqrt{3}},\pm\frac{1}{\sqrt{3}},\pm\frac{1}{\sqrt{3}}\Big)
\end{equation*}
taking every possible combination of signs. We confirm these results with the following heat map, see Figure \ref{fig:optimal_cube_n1_n2}.

In the case of normal anchoring, the cube and approximations thereof have been studied for example in \cite{BGL} and \cite{SPD}.
To the best of our knowledge, the case of tangential anchoring has not been considered numerically or experimentally. 
The optimal orientation found in \cite[Section 4]{BGL} for normal anchoring corresponds to the saddle points in the case of tangential anchoring, as seen in Figure \ref{fig:optimal_cube_n1_n2}.

\begin{figure}[H]
\begin{center}
\includegraphics[scale=0.9]{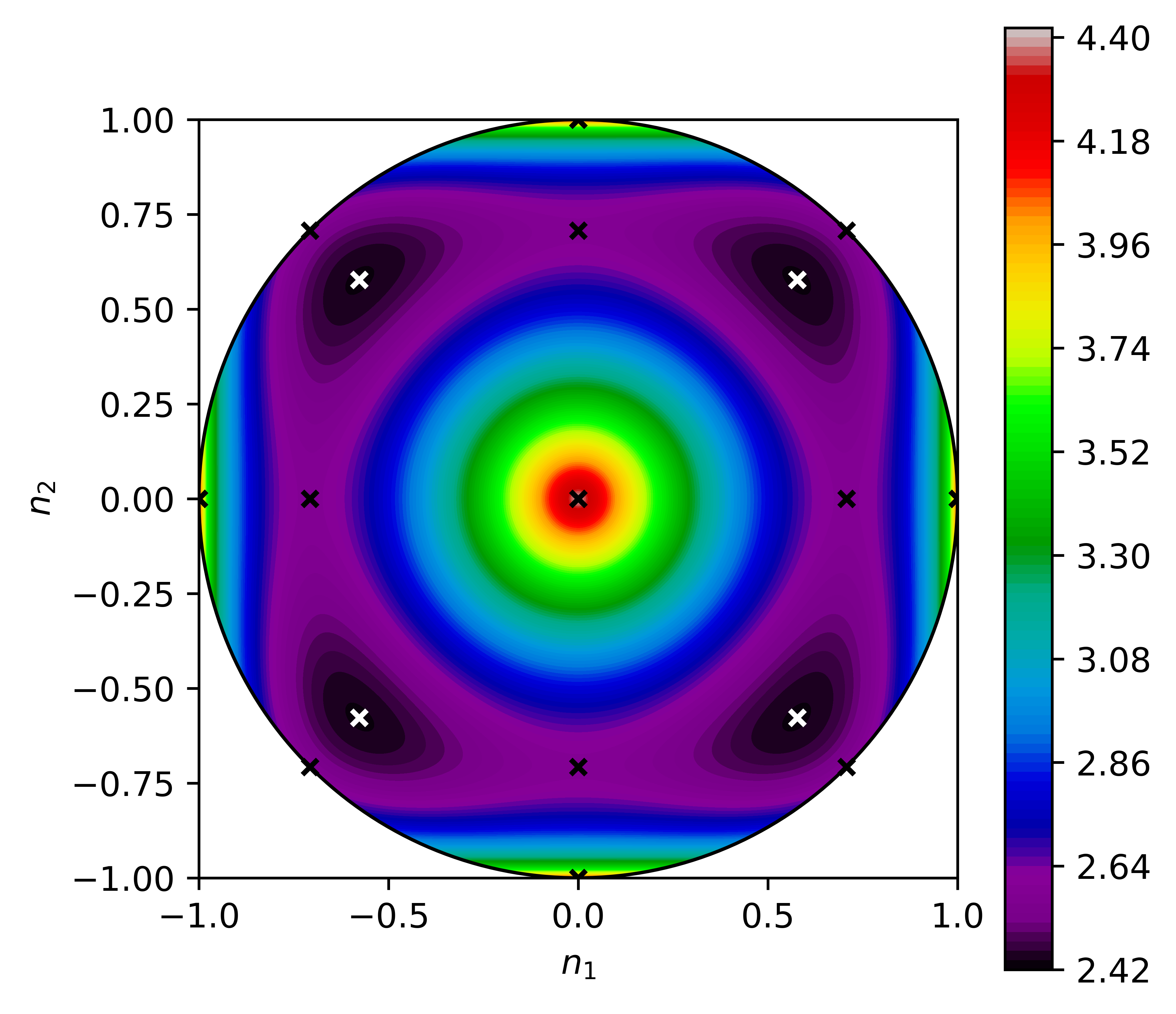} 
\caption{Plot of the minimal energy of the cube depending on $n_1$ and $n_2$. The optimal orientations $n_\infty=(\pm\frac{1}{\sqrt{3}},\pm\frac{1}{\sqrt{3}},\pm\frac{1}{\sqrt{3}})$ are indicated by white crosses. Other critical points of the energy are indicated with black crosses. They are: $n_\infty=\pm\ee_j$ for $j\in\{1,2,3\}$ (global maxima, four sides of the cube parallel to $n_\infty$, two side perpendicular) and $n_\infty=(\pm\frac{1}{\sqrt{2}},\pm\frac{1}{\sqrt{2}},0)$ or $n_\infty=(\pm\frac{1}{\sqrt{2}},0,\pm\frac{1}{\sqrt{2}})$ or $n_\infty=(0,\pm\frac{1}{\sqrt{2}},\pm\frac{1}{\sqrt{2}})$ (saddle points, two sides of the cube parallel to $n_\infty$, the other four in an $45^\circ$ angle).}
\label{fig:optimal_cube_n1_n2}
\end{center}
\end{figure}

Since the limiting energy $E_0$ has not been shown to hold for manifolds which are not $C^{\answ{2},1}$ we would like to be able to understand the optimal orientation for $C^{\answ{2},1}$ approximations $\M^\e$ and we summarize these results in the following proposition.
\begin{proposition}\label{prop:approx_C11}
    Let $\M$ be \answ{$C^{2,1}$ away from finitely many points and curves of finite length. Let} $(\M^\e)$ be a sequence of $C^{\answ{2},1}$ manifolds such that
    \begin{equation*}
        \H^2(\M^\e\Delta\M)<\e/\ftf
        \, \blue{,}
    \end{equation*}
    \blue{and such that there exists an open subset $U\subset\M\cap\M^\e$ such that $\H^2((\M\cap\M^\e)\setminus U)=0$ for all $\e>0$. }Define $A\subset\S^2$ to be the set of global minima of $E_0(\M;\cdot)$ and let
    \begin{equation*}
        B_\d=\{n\in\S^2:\dist_{\S^2}(n,A)<\d\}
        \, .
    \end{equation*}
    Then, for any $\d>0$, there exists $\e>0$ sufficiently small such that if $n^\e\in\S^2$ is a global minimum of $E_0(\M^\e;\cdot)$, it must be that $n^\e\in B_\d$.
\end{proposition}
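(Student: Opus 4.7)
The plan is to establish uniform convergence of $E_0(\M^\e;\cdot)$ to $E_0(\M;\cdot)$ on $\S^2$ and then deduce the desired convergence of minimizers by a standard contradiction-plus-compactness argument.

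The first step is the uniform estimate. Since $0\le 1-\sqrt{1-(\nu\cdot n)^2}\le 1$ for any two unit vectors $\nu,n$, the integrand defining $E_0$ is bounded by $\ftf$. Decomposing the integral over $\M^\e$ and $\M$ by introducing and subtracting integration over $\M^\e\cap\M$ yields
\begin{equation*}
    |E_0(\M^\e;n)-E_0(\M;n)|
    \ \le \
    \ftf\, \H^2(\M^\e\Delta\M)
    \ < \
    \e
\end{equation*}
for every $n\in\S^2$, which is exactly uniform convergence on $\S^2$.

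The second step records that $E_0(\M;\cdot)$ is continuous on $\S^2$. Indeed, the integrand is continuous in $n$ and dominated by the integrable constant $\ftf$, so dominated convergence applies. Likewise, each $E_0(\M^\e;\cdot)$ is continuous, and since $\S^2$ is compact, minimizers $n^\e$ exist for every $\e$.

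Finally, I argue by contradiction. Suppose there were $\d>0$ and a sequence $\e_k\to 0$ with minimizers $n^{\e_k}\notin B_\d$. The set $\S^2\setminus B_\d$ is closed, hence compact, so along a subsequence $n^{\e_k}\to n^*\in\S^2\setminus B_\d$; in particular $n^*\notin A$. Fix any $n_0\in A$. Minimality gives $E_0(\M^{\e_k};n^{\e_k})\le E_0(\M^{\e_k};n_0)$, and combining the uniform convergence from the first step with the continuity from the second step lets us pass to the limit on both sides to obtain
\begin{equation*}
    E_0(\M;n^*)
    \ \le \
    E_0(\M;n_0)
    \ = \
    \min_{n\in\S^2}E_0(\M;n)
    \, ,
\end{equation*}
so $n^*\in A$, contradicting $n^*\notin B_\d\supset A$. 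Hence for all sufficiently small $\e$, every minimizer $n^\e$ lies in $B_\d$.

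The only place that requires any care is the first step: the bound on the symmetric difference is used through the hypothesis $\H^2(\M^\e\Delta\M)<\e/\ftf$, and I must make sure the integrand is defined $\H^2$-a.e. on both $\M$ and $\M^\e$, which follows from $C^{1,1}$-regularity of $\M^\e$ and $C^1$-a.e. regularity of $\M$. Beyond that, the argument is a textbook stability-of-minimizers result, so I do not expect any substantial obstacle.
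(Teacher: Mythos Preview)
Your proof is correct and follows essentially the same approach as the paper: both establish the uniform bound $|E_0(\M^\e;n)-E_0(\M;n)|<\e$ from the symmetric-difference hypothesis and then use continuity of $E_0(\M;\cdot)$ to conclude. The only cosmetic difference is in the final step: the paper argues directly by setting $\mu_\d:=\min_{\S^2\setminus B_\d}E_0(\M;\cdot)-\min_{\S^2}E_0(\M;\cdot)>0$ and taking $\e<\mu_\d/2$, whereas you reach the same conclusion via a subsequence-and-contradiction argument; both are standard stability-of-minimizers arguments and neither offers a real advantage over the other.
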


\newpage

\begin{proof}
    We first remark that $E_0(\M;\cdot)$ is continuous \answ{and $\S^2$ is compact, so $A$ is non-empty and} $E_0(\M;n)>E_0(\M;m)$ for $m\in A$ and $n\in\S^2\setminus A$. \blue{Let $\nu$ and $\nu_\e$ be the unit outward normals to $\M$ and $\M^\e$ respectively, then $\nu=\nu_\e$ on $U$. As a result,} for any $n\in\S^2$, \blue{we have}
    \begin{align*}
        |E_0(\M^\e;n)&-E_0(\M;n)|\\
        &=\left|\int_{\M^\e\setminus\M}\ftf\Big(1-\sqrt{1-(\nu_\blue{\e}\cdot n)^2}\Big)\, d\H^2-\int_{\M\setminus\M^\e}\ftf\Big(1-\sqrt{1-(\nu\cdot n)^2}\Big)\, d\H^2\right|\\
        &\leq\ftf\Big(\H^2(\M^\e\setminus\M)+\H^2(\M\setminus\M^\e)\Big)=\ftf\H^2(\M^\e\Delta\M)<\e
        \, ,
    \end{align*}
    so $E_0(\M^\e;\cdot)\to E_0(\M;\cdot)$ uniformly on $\S^2$. Let $K\subset \S^2$ be any compact set, then if follows from uniform convergence that
    \begin{equation}\label{cpt:min}
        \Big|\min_{n\in K} E_0(\M^\e;n)-\min_{m\in K} E_0(\M;m)\Big|<\e
        \, .
    \end{equation}
    Let $\d>0$ be fixed, then $\S^2\setminus B_\d$ is a compact set and
    \begin{equation*}
        m_\d:=\min\{E_0(\M;n):n\in \S^2\setminus B_\d\}>\min\{E_0(\M;n):n\in\S^2\}
        \, .
    \end{equation*}
    Now setting $\mu_\d := m_\d-\min\{E_0(\M;n):n\in\S^2\}>0$ and choosing $0<\e<\mu_\d/2$, it follows \blue{from} \eqref{cpt:min} that
    \begin{equation*}
        \min\{E_0(\M^\e;n):n\in\S^2\setminus B_\d\}>\frac{\mu_\d}{2}+\min\{E_0(\M;n):n\in \S^2\}
        \, .
    \end{equation*}
    But we can also see by the same reasoning that
    \begin{equation*}
        \min\{E_0(\M^\e;n):n\in B_\d\}<\frac{\mu_\d}{2}+\min\{E_0(\M;n):n\in\S^2\}
        \, ,
    \end{equation*}
    thus if $n^\e$ is a global minimum of $E_0(\M^\e;\cdot)$, then $n^\e\in B_\d$.
\end{proof}

\paragraph{Disclosure statement.}
The authors report there are no competing interests to declare.

\paragraph{Data availability statement.} No data set associated with the paper.

\bibliographystyle{acm}

\end{document}